\documentclass[12pt]{amsart}
\usepackage{}

\usepackage{amsmath}
\usepackage{amsfonts}
\usepackage{amssymb}
\usepackage[all]{xy}           %xypic macro for latex2.09

\usepackage{bbding}
\usepackage{txfonts}
\usepackage{amscd}

\usepackage[shortlabels]{enumitem}
\usepackage{ifpdf}
\ifpdf
  \usepackage[colorlinks,final,backref=page,hyperindex]{hyperref}
\else
  \usepackage[colorlinks,final,backref=page,hyperindex]{hyperref}
\fi
\usepackage{tikz}
\usepackage[active]{srcltx}

%======================================================================
    %was    1, 1.5 for double sp
%======================================================================
%%standard setting
%\topmargin -0.3truein \textheight 8.4truein
%\oddsidemargin 0.2truein
%\evensidemargin 0.2truein \textwidth 440pt
%======================================================================
%%little larger standard setting: good setting
\topmargin -.8cm \textheight 22.8cm \oddsidemargin 0cm \evensidemargin -0cm \textwidth 16.3cm
%========================================================================================%%wide
%%lower setting for 1920x1080
%%\topmargin -.9cm \textheight 21cm \oddsidemargin 0cm \evensidemargin -0cm \textwidth 16.3cm
%%%%%%%%%%%%%%

\makeatletter

\newtheorem{thm}{Theorem}[section]
\newtheorem{lem}[thm]{Lemma}
\newtheorem{cor}[thm]{Corollary}
\newtheorem{pro}[thm]{Proposition}
\newtheorem{ex}[thm]{Example}

\newtheorem{defi}[thm]{Definition}

\setlength{\baselineskip}{1.8\baselineskip}

\newcommand {\emptycomment}[1]{}

\newcommand{\be }{\begin{equation}}
\newcommand{\ee }{\end{equation}}

\newcommand{\g}{\mathfrak g}
\newcommand{\h}{\mathfrak h}

%{{\mathcal{E}}}%{\mathcal{B}}

%\newcommand{\A}{\mathcal{A}}
\newcommand{\huaA}{\mathcal{A}}%{{\mathcal{F}}}%{\mathcal{A}}
\newcommand{\huaL}{\mathcal{L}}
\newcommand{\huaR}{\mathcal{R}}

\newcommand{\huaG}{\mathcal{G}}

\newcommand{\huaV}{\mathcal{V}}
\newcommand{\huaW}{\mathcal{W}}

%{\mathcal{C}}
\newcommand{\huaD}{\mathcal{D}}

\newcommand{\huaO}{{\mathcal{O}}}

\newcommand{\frkd}{\mathfrak d}
\newcommand{\frke}{\mathfrak e}

\newcommand{\frkg}{\mathfrak g}

\newcommand{\frkl}{\mathfrak l}

\newcommand{\frks}{\mathfrak s}

\newcommand{\br}[1]{   [ \cdot,    \cdot  ]   }

\newcommand{\dM}{\mathrm{d}}

\newcommand{\Hom}{\mathrm{Hom}}

\newcommand{\gl}{\mathfrak {gl}}

\newcommand{\End}{\mathrm{End}}
\newcommand{\ad}{\mathrm{ad}}

\newcommand{\K}{\mathbb{K}}

%\nc{\oop}{$\mathcal{O}$-operator\xspace}
%\nc{\oops}{$\mathcal{O}$-operators\xspace}

\begin{document}

\title[Pre-Lie 2-bialgebras and 2-grade classical Yang-Baxter equations]{Pre-Lie 2-bialgebras and 2-grade classical Yang-Baxter equations}

\author{Jiefeng Liu}
\address{School of Mathematics and Statistics, Northeast Normal University, Changchun 130024, China}
\email{liujf534@nenu.edu.cn}

\author{Tongtong Yue}
\address{School of Mathematics and Statistics, Northeast Normal University, Changchun 130024, China}
\email{1834955099@qq.com}

\author{Qi Wang}
\address{School of Mathematics and Statistics, Changchun University of Technology, Changchun 130012, Jilin, China}
\email{wangqi2018@ccut.edu.cn}

\vspace{-5mm}

%\date{\today}

\begin{abstract}
We introduce a notion of a para-K\"{a}hler strict Lie 2-algebra, which can be viewed as a categorification of a para-K\"{a}hler Lie algebra. In order to study para-K\"{a}hler strict Lie 2-algebra in terms of strict pre-Lie 2-algebras, we introduce the Manin triples, matched pairs and bialgebra theory for strict pre-Lie 2-algebras and the equivalent relationships between them are also established. By means of the cohomology theory of Lie 2-algebras, we study the coboundary strict pre-Lie 2-algebras and introduce 2-graded classical Yang-Baxter equations in strict pre-Lie 2-algebras. The solutions of the 2-graded classical Yang-Baxter equations are useful to construct strict pre-Lie 2-algebras and para-K\"{a}hler strict Lie 2-algebras. In particular, there is a natural construction of strict pre-Lie 2-bialgebras from the strict pre-Lie 2-algebras.
\end{abstract}

\subjclass[2020]{17D25, 17B38, 18A05}

\keywords{pre-Lie 2-algebras, para-K\"{a}hler strict Lie 2-algebras, strict pre-Lie 2-bialgebras, 2-graded classical Yang-Baxter equation }
\maketitle

\tableofcontents

\allowdisplaybreaks

%\end{document}

\section{Introduction}
The purpose of this paper is to study the concepts of para-K\"{a}hler strict Lie 2-algebras and strict pre-Lie 2-bialgebras, which both provide certain categorifications of the concepts of para-K\"{a}hler Lie algebras and pre-Lie bialgebras, respectively. The study of the coboundary strict pre-Lie 2-bialgebra leads to the 
2-graded classical Yang-Baxter equations on strict pre-Lie 2-algebras, whose solutions give a natural method to construct strict pre-Lie 2-bialgebras and para-K\"{a}hler strict Lie 2-algebras.

\subsection{Pre-Lie algebras and pre-Lie bialgebras}
Pre-Lie algebras (or left-symmetric algebras) are a class of nonassociative algebras coming from the study of convex homogeneous cones, affine manifolds and affine structures on Lie groups, deformation of associative algebras and then  appeared in many fields in mathematics and mathematical physics, such as complex and symplectic structures on Lie groups and Lie algebras (\cite{BBM,symplectic Lie algebras,Kan}),  Poisson brackets and infinite dimensional Lie algebras (\cite{BaN}), vertex algebras (\cite{Bakalov}), $F$-manifold algebras (\cite{Dot19,LSB}), homotopy algebra structures (\cite{Ban,DSV}) and operads (\cite{CL}). See  the survey articles \cite{Bai21,Burde}
and the references therein for more details. 

The concept of phase space associated with a Lie algebra was initially introduced by Kupershmidt in \cite{Ku94}. Kupershmidt demonstrated that pre-Lie algebras constitute the fundamental structures underlying the phase spaces of Lie algebras, forming a natural category that holds significant importance in both classical and quantum mechanics (\cite{Ku99a}). Furthermore, phase spaces can be precisely characterized as para-K\"{a}hler structures on Lie algebras. From a geometric perspective, a para-K\"{a}hler manifold is defined as a symplectic manifold equipped with a pair of transversal Lagrangian foliations. In particular, a para-K\"{a}hler Lie algebra corresponds to the Lie algebra of a Lie group $G$ endowed with a  $G$-invariant para-K\"{a}hler structure (\cite{Kan}). See \cite{BeM,Kan,Ku94,Ku99a} for more details about para-K\"{a}hler Lie algebras and applications in mathematical physics. Motivated by the study of para-K\"{a}hler Lie algebras and phase spaces in terms of pre-Lie algebras,   the concepts of pre-Lie bialgebras (also referred to as left-symmetric bialgebras), matched pairs, and Manin triples for pre-Lie algebras were systematically developed in \cite{Left-symmetric bialgebras}.   The notion of coboundary pre-Lie bialgebra leads to the classical Yang-Baxter equation (CYBE) on pre-Lie algebra.  A  symmetric  solution of  this  equation  gives a pre-Lie bialgebra and a para-K\"{a}hler Lie algebra naturally. A Hessian structure on a pre-Lie algebra, which corresponds to an affine Lie group $G$ with a $G$-invariant Hessian metric (\cite{Shima}), gives a non-degenerate symmetric $\frks$-matrix. In \cite{Ku99b}, Kupershmidt introduced the notion of an $\huaO$-operator on a Lie algebra in order to better understand the relationship between the classical Yang-Baxter equation and the related integrable systems. A symmetric solution of the CYBE can be equivalently described by an  $\huaO$-operator on a Lie algebra operator with respect to the coregular representation. The relationships among these mathematical structures can be summarized in the following diagram:

\vspace{-.1cm}
\begin{equation*}
    \begin{split}
        \xymatrix{
            &&\text{matched pairs of}\atop \text{pre-Lie algebras}&& \\
             \text{$\huaO$-operators of}\atop \text{Lie algebras}   \ar@2{->}[r]    & 
             \text{solutions of}\atop \text{CYBE} 
            \ar@2{->}[r]& \text{pre-Lie}\atop \text{ bialgebras}
            \ar@2{<->}[r] \ar@2{<->}[d] \ar@2{<->}[u]& \text{para-K\"{a}hler}\atop \text{ Lie algebras}.\\
         &   &\text{Manin triples for}\atop \text{pre-Lie algebras} && }
    \end{split}
\end{equation*}

\subsection{Lie 2-algebras and Lie 2-bialgebras}
As categorification of Lie algebras, the notion of Lie $2$-algebras was introduced by Baez and Crans in \cite{baez:2algebras}, which is just the 2-term $L_\infty$-algebra. The concept of an $L_\infty$-algebra (sometimes called a strongly homotopy (sh) Lie algebra)  was originally
introduced in \cite{LS,LM} as a model for ``Lie algebras that satisfy Jacobi identity up to all
higher homotopies''. The structure of a Lie 2-algebra also appears in
string theory, higher symplectic geometry \cite{baez:classicalstring,baez:string}, and Courant algebroids
\cite{Roytenbergphdthesis,shengzhu1}. 

A Lie bialgebra \cite{D} is the Lie-theoretic case of a bialgebra, which is a set with a Lie algebra structure and a Lie coalgebra one  which are compatible. Lie bialgebras are the infinitesimal objects of Poisson-Lie groups. Both Lie bialgebras and Poisson-Lie groups are considered as semiclassical limits of quantum groups. In order to give a model for the categorification of Lie bialgebras, Kravchenko in \cite{olga} used higher derived brackets to define $L_\infty$-bialgebra and thus obtained 2-term $L_\infty$-bialgebra. However, in this setting, although a 2-term $L_\infty$-bialgebra gives a Lie 2-algebra structure on the graded vector $V$, it does not give a Lie 2-algebra structure on the dual, $V^*$. In \cite{BSZ}, the authors adapted the shifting trick to introduce another categorification of Lie bialgebras, named by Lie 2-bialgebra, which consists of  Lie 2-algebra structures on $V$ and $V^*$, along with some compatibility conditions between them. In particular, for the strict case, they described the compatibility conditions between brackets and cobrackets as a cocycle condition. Guided by the classical theory of Lie bialgebras, they developed, in explicit terms, various higher corresponding objects, like matched pairs, Manin triples, Manin triples and their relations. Furthermore, they considered the coboundary Lie 2-bialgebras and give the notion of 2-graded classical Yang-Baxter equations (2-graded CYBE) whose solutions provide examples of Lie
2-bialgebras.

\subsection{Pre-Lie 2-algebras,  para-K\"{a}hler Lie 2-algebras  and pre-Lie 2-bialgebras}
In \cite{CL}, the authors studied pre-Lie algebras using the theory of operads, and introduced the notion of a pre-Lie$_\infty$-algebra. The author also proved that the PreLie operad is Koszul. In \cite{Sheng19}, the author introduced the notion of a pre-Lie 2-algebra, which is a categorification of a pre-Lie algebra, and proved that the category of pre-Lie 2-algebras and the category of 2-term pre-Lie$_\infty$-algebras are equivalent. Furthermore, the solutions of 2-graded classical Yang-Baxter equations are constructed by pre-Lie 2-algebras. Also, $\mathcal O$-operators on Lie 2-algebras are introduced, which can be used to construct pre-Lie 2-algebras.

Recall that a symplectic Lie algebra is a Lie algebra $\g$ equipped with a nondegenerate skew-symmetric 2-cocycle $\omega$, that is, 
$$\omega([x,y]_\g,z)+\omega([z,x]_\g,y)+\omega([y,z]_\g,x)=0,\quad\forall~ x,y,z\in \g.$$
Then, in \cite{symplectic Lie algebras}, Chu showed that the underlying algebraic structure of the symplectic Lie algebra is a pre-Lie algebra, in which the pre-Lie operation ``$\cdot_\g$" is defined by
$$\omega(x\cdot_\g y,z)=-\omega(y,[x,z]_\g),\quad~\forall x,y,z\in\g.$$
As a categorification of symplectic Lie algebras , we introduce the notion of a symplectic strict Lie 2-algebra, which consists of a  strict Lie 2-algebra equipped with a graded skew-symmetric nondegenerate closed $2$-form $\omega=(\omega_1,\omega_2)$ with $\omega_1\in \Hom(\wedge^2\g_0,\K)$ and $\omega_2\in \Hom(\g_0\wedge \g_{-1},\K)$. Furthermore, we show that the underlying algebraic structure of a symplectic strict Lie 2-algebra is a strict pre-Lie $2$-algebra.  The following commutative diagram is established:
$$
\xymatrix{
\mbox{symplectic~strict~Lie~2~algebras}
	\ar[r]^{}& \mbox{strict~pre-Lie~2-algebras}\\
	\mbox{symplectic~Lie~algebra}\ar[u]_{\mbox{categorification}}\ar[r]& \mbox{pre-Lie~algebras.}\ar[u]_{\mbox{categorification}}}
$$
In order to give a categorification of para-K\"{a}hler Lie algebras, we introduce a notion of a para-K\"{a}hler strict Lie 2-algebra, which is  a symplectic strict  Lie 2-algebra decomposed as  a direct sum of the underlying graded vector spaces of two Lagrangian strict sub-Lie $2$-algebras. Unlike the para-K\"{a}hler Lie algebras can be equivalently described by  pre-Lie bialgebras,  para-K\"{a}hler strict Lie 2-algebras can not be described by strict pre-Lie 2-bialgebras. However,   strict pre-Lie 2-bialgebras corresponds to a class of para-K\"{a}hler strict Lie 2-algebras, which is a  para-K\"{a}hler strict Lie 2-algebra with $\omega_1=0$, named by  a special para-K\"{a}hler strict Lie 2-algebra. Guided by the philosophy in \cite{BSZ}, we  introduce matched pairs, Manin triples, bialgebras for strict pre-Lie 2-algebra. The equivalences between those structures are established. We also study the cobounary strict pre-Lie 2-bialgebras, which leads to the 2-graded classical Yang-Baxter equations in strict pre-Lie 2-algebras. A solution of the 2-graded classical Yang-Baxter equations (CYBEs) gives rise to a strict pre-Lie 2-bialgebra naturally and thus give rise to a special para-K\"{a}hler strict Lie 2-algebra. The 2-graded CYBEs is interpreted in terms of  $\huaO$-operators on the subadjacent strict Lie 2-algebras. Conversely, an $\huaO$-operator on a strict Lie 2-algebra can  provide  a solution of the 2-graded CYBEs in certain bigger strict pre-Lie 2-algebras. These connections can be illustrated by the following diagram:

\vspace{-.1cm}
\begin{equation*}
	\begin{split}
		\xymatrix{
			&&\text{matched pairs of}\atop \text{strict pre-Lie 2-algebras}&& \\
			\text{$\huaO$-operators of}\atop \text{strict Lie 2-algebras}   \ar@2{->}[r]    & 
			\text{solutions of}\atop \text{2-graded CYBEs} 
			\ar@2{->}[r]& \text{strict pre-Lie}\atop \text{ 2-bialgebras}
			\ar@2{<->}[r] \ar@2{<->}[d] \ar@2{<->}[u]& \text{special para-K\"{a}hler}\atop \text{strict  Lie 2-algebras}.\\
			&   &\text{Manin triples for}\atop \text{strict pre-Lie 2-algebras} && }
	\end{split}
\end{equation*}
In particular, for a strict pre-Lie $2$-algebra $\huaA=(A_0,A_{-1},\dM,\cdot)$, 
 \begin{equation*}
 	R=\displaystyle\sum_{i=1}^{k}(e_i\otimes e_i^*+e_i^*\otimes e_i)+\displaystyle\sum_{j=1}^{l}(\frke_j\otimes \frke_j^*+\frke_j^*\otimes \frke_j)
 \end{equation*}
 is a solution of $2$-graded classical Yang-Baxter Equations in $\huaA\ltimes_{L^*,0}\huaA^*$ with $L^*=(L_0^*, L_1^*)$, where  $\{e_i\}_{1\leq i\leq k}$ and $\{\frke_j\}_{1\leq j\leq l}$ are the basis of $A_0$ and $A_{-1}$ respectively, and $\{e_i^*\}_{1\leq i\leq k}$ and $\{\frke_j^*\}_{1\leq j\leq l}$ are the dual basis. Besides, partial results for strict pre-Lie 2-algebras can be generalized to non-strict pre-Lie 2-algebras. We will study them in our future work.

\subsection{Outline of the paper.} The paper is organized as follows. In Section \ref{sec:Pre}, we first recall the representations and cohomology of pre-Lie algebras. Then we recall Lie 2-algebras and their representations and cohomology. In Section \ref{sec:Pre-Lie 2-algebras}, as a categorification of a symplectic structure on a Lie algebra, we give the notion of a symplectic structure on a strict Lie 2-algebra and show that a symplectic strict Lie 2-algebra can give rise to a strict pre-Lie 2-algebra. We also give the constructions of strict pre-Lie 2-algebras through $\huaO$-operators on strict Lie 2-algebras, Rota-Baxter operators of weight $\lambda$ on strict associative 2-algebras and derivations on strict commutative associative 2-algebras. Also, the representations of strict pre-Lie 2-algebras are discussed.  In Section \ref{sec:Manin triple}, we introduce the notion of a para-K\"{a}hler strict Lie 2-algebra, which is a categorification of para-K\"{a}hler Lie algebras. In order to study the para-K\"{a}hler strict Lie 2-algebras in terms of strict pre-Lie 2-algebras, a notion of special para-K\"{a}hler strict Lie 2-algebras is introduced. We introduce the Manin triples  and matched pairs of strict pre-Lie 2-algebras and show that special para-K\"{a}hler strict Lie 2-algebras, Manin triples  and matched pairs of strict pre-Lie 2-algebras are equivalent. In Section \ref{sec:Pre-Lie 2-bialgebras}, we introduce the notion of a strict pre-Lie 2-bialgebra and show that it is equivalent to matched pairs of strict pre-Lie 2-algebras. Therefore, the equivalent relations between those structures are established. Furthermore, we study the coboundary strict pre-Lie 2-bialgebra, which leads to the 2-graded classical Yang-Baxter Equations on strict pre-Lie 2-algebras. We give an operator form description of the 2-graded classical Yang-Baxter Equations. Finally, we use the $\huaO$-operators on strict Lie 2-algebras to construct solutions of 2-graded classical Yang-Baxter Equations on strict pre-Lie 2-algebras.

In this paper, all the vector spaces are over algebraically closed field $\mathbb K$ of characteristic $0$ and finite dimensional.

\section{Preliminaries}\label{sec:Pre}
\subsection{Pre-Lie algebras and their cohomology}
\begin{defi}  A {\bf pre-Lie algebra} is a pair $(A,\cdot_A)$, where $A$ is a vector space and  $\cdot_A:A\otimes A\longrightarrow A$ is a bilinear multiplication
satisfying that for all $x,y,z\in A$, the associator
$(x,y,z)=(x\cdot_A y)\cdot_A z-x\cdot_A(y\cdot_A z)$ is symmetric in $x,y$,
i.e.
$$(x,y,z)=(y,x,z),\;\;{\rm or}\;\;{\rm
equivalently,}\;\;(x\cdot_A y)\cdot_A z-x\cdot_A(y\cdot_A z)=(y\cdot_A x)\cdot_A
z-y\cdot_A(x\cdot_A z).$$
\end{defi}

Let $(A,\cdot_A)$ be a pre-Lie algebra. The commutator $
[x,y]_A=x\cdot_A y-y\cdot_A x$ defines a Lie algebra structure
on $A$, which is called the {\bf sub-adjacent Lie algebra} of
$(A,\cdot_A)$ and denoted by $\huaG(A)$. Furthermore,
$L:A\longrightarrow \gl(A)$ with $x\mapsto L_x$, where
$L_xy=x\cdot_A y$, for all $x,y\in A$, gives a representation of
the Lie algebra $\huaG(A)$ on $A$. See \cite{Bai21} for more details.

\begin{defi}
	Let $(A,\cdot_A)$ be a pre-Lie algebra and $V$ a vector space. A {\bf representation} of $A$ on $V$ consists of a pair $(\rho, \mu)$, where $\rho:A \longrightarrow \gl(V)$ is a representation of the Lie algebra $\huaG(A)$ on $V$ and $\mu:A \longrightarrow \gl(V)$ is a linear map satisfying
\begin{eqnarray} 
	\label{representation of pre-Lie algebra}
	\rho(x)\mu(y)u-\mu(y)\rho(x)u=\mu(x\cdot_A y)u-\mu(y)\mu(x)u,, \quad \forall~x,y\in A, ~u\in V.
\end{eqnarray}
\end{defi}

Define $R:A\longrightarrow \gl(A)$ by $R_xy=y\cdot_A x$. Thus, $(A; L, R)$ is a representation of $(A,\cdot_A)$. Furthermore, $(A^*;\ad^*=L^*-R^*, -R^*)$ is also a representation of $(A,\cdot_A)$, where $L^*$ and $R^*$ are given by $$\left\langle L^*_x\xi, y\right\rangle=\left\langle\xi, -L_xy\right\rangle, \left\langle R^*_x\xi, y\right\rangle=\left\langle\xi, -R_xy\right\rangle, \quad \forall~x,y\in A, ~\xi\in A^*.$$

The cohomology complex for a pre-Lie algebra $(A,\cdot_A)$ with a representation $(V;\rho,\mu)$ is given as follows.
The set of $n$-cochains is given by
$C^n(A,V)=\Hom(\wedge^{n-1}A\otimes A,V),\
n\geq 1.$  For all $\phi\in C^n(A,V)$, the coboundary operator $\delta:C^n(A,V)\longrightarrow C^{n+1}(A,V)$ is given by
 \begin{eqnarray}\label{eq:pre-Lie cohomology}
 \nonumber\delta\phi(x_1, \cdots,x_{n+1})&=&\sum_{i=1}^{n}(-1)^{i+1}\rho({x_i})\phi(x_1, \cdots,\hat{x_i},\cdots,x_{n+1})\\
\label{eq:cobold} &&+\sum_{i=1}^{n}(-1)^{i+1}\mu({x_{n+1}})\phi(x_1, \cdots,\hat{x_i},\cdots,x_n,x_i)\\
 \nonumber&&-\sum_{i=1}^{n}(-1)^{i+1}\phi(x_1, \cdots,\hat{x_i},\cdots,x_n,x_i\cdot_A x_{n+1})\\
 \nonumber&&+\sum_{1\leq i<j\leq n}(-1)^{i+j}\phi([x_i,x_j]_A,x_1,\cdots,\hat{x_i},\cdots,\hat{x_j},\cdots,x_{n+1}),
\end{eqnarray}
for all $x_i\in A,~i=1,\cdots,n+1$.

\subsection{Lie 2-algebras and their cohomology}

\begin{defi}\label{Lie 2-algebra}{\rm (\cite{baez:2algebras})}
	A {\bf Lie $2$-algebra} is a $2$-term graded vector spaces $\g=\g_0\oplus\g_{-1}$ equipped with a linear $\mathfrak{d}:\g_{-1}\longrightarrow \g_0$, a skew-symmetric bilinear map $\mathfrak{l}_2:\g_i\times\g_j\longrightarrow\g_{i+j} ,-1\leq i+j \leq0$ and a skew-symmetric bilinear map $\mathfrak{l}_3:\land^3\g_0\longrightarrow\g_{-1}$, such that for any $x_i,x,y,z\in \g_0$ and $h,k\in \g_{-1}$, the following equalities are satisfied:
\begin{itemize}
\item[\rm(i)] $\mathfrak{d}\mathfrak{l}_2(x,h)=\mathfrak{l}_2(x,\mathfrak{d}h)$, \quad $\mathfrak{l}_2(\mathfrak{d}h,k)=\mathfrak{l}_2(h,\mathfrak{d}k)$,
\item[\rm(ii)] $\mathfrak{d}\mathfrak{l}_3(x,y,z)=\mathfrak{l}_2(x,\mathfrak{l}_2(y,z))+\mathfrak{l}_2(y,\mathfrak{l}_2(z,x))+\mathfrak{l}_2(z,\mathfrak{l}_2(x,y))$,
\item[\rm(iii)] $\mathfrak{l}_3(x,y,\mathfrak{d}h)=\mathfrak{l}_2(x,\mathfrak{l}_2(y,h))+\mathfrak{l}_2(y,\mathfrak{l}_2(h,x))+\mathfrak{l}_2(h,\mathfrak{l}_2(x,y))$,
\item[\rm(iv)]  the Jacobiator identity:$$\displaystyle\sum_{i=1}^{4}(-1)^{i+1}\mathfrak{l}_2(x_i,\mathfrak{l}_3(x_1,\cdots,\hat{x_i},\cdots, x_4))+\displaystyle\sum_{i<j}(-1)^{i+j}\mathfrak{l}_3(\mathfrak{l}_2(x_i,x_j),x_1,\cdots,\hat{x_i},\cdots,\hat{x_j},\cdots,x_4)=0.$$
\end{itemize}
 \end{defi}

Usually, we denote a Lie $2$-algebra by $(\g_0, \g_{-1}, \mathfrak{d}, \mathfrak{l}_2, \mathfrak{l}_3)$, or simply by $\g$. A Lie $2$-algebra is called {\bf strict} if $\mathfrak{l}_3=0$. We also denote a strict Lie $2$-algebra $(\g_0, \g_{-1}, \mathfrak{d}, [\cdot,\cdot]=\mathfrak{l}_2)$.

\begin{defi}\label{strict homomorphism}{\rm (\cite{baez:2algebras})}
	Let $\g=(\g_0, \g_{-1}, \mathfrak{d}, \mathfrak{l}_2)$ and ${\g}^{\prime}=({\g_0}^{\prime}, {\g_{-1}}^{\prime}, {\mathfrak{d}}^{\prime}, {\mathfrak{l}_2}^{\prime})$ be two strict Lie $2$-algebras. A {\bf strict homomorphism} $f$ from $\g$ to ${\g}^{\prime}$ consists of linear maps $f_0: \g_0 \longrightarrow {\g_0}^{\prime}$ and $f_1: \g_{-1} \longrightarrow {\g_{-1}}^{\prime}$, such that the following equalities hold for all $x, y, z \in \g_0, h \in \g_{-1}$,
\begin{itemize}
\item[\rm(i)]$f_0 \circ \mathfrak{d}={\mathfrak{d}}^{\prime} \circ f_1$,
\item[\rm(ii)]$f_0\mathfrak{l}_2(x, y)-{\mathfrak{l}_2}^{\prime}(f_0(x), f_0(y))=0$,
\item[\rm(iii)]$f_1\mathfrak{l}_2(x, h)-{\mathfrak{l}_2}^{\prime}(f_0(x), f_1(h))=0$.
\end{itemize}
\end{defi}

Let $\huaV:V_{-k+1} \stackrel{\partial}{\longrightarrow} V_{-k+2} \stackrel{\partial}{\longrightarrow} \cdots \stackrel{\partial}{\longrightarrow} V_{-1} \stackrel{\partial}{\longrightarrow} V_0$ be a complex of vector spaces. Define ${\rm End}_\partial^0(\huaV)$ by
 $${\rm End}_\partial^0(\huaV)=\{E \in \displaystyle\oplus_{i=0}^{k-1} {\rm End}(V_i)~ |~ E \circ \partial = \partial \circ E\},$$
  and define $${\rm End}^{-1}(\huaV)=\{E \in \displaystyle\oplus_{i=0}^{k-2} {\rm Hom}(V_i, V_{i+1})~ |~ [E, E]_C=0\},$$ where $[\cdot, \cdot]_C$ is the natural commutator. Then there is a differential $\delta:{\rm End}^{-1}(\huaV) \longrightarrow {\rm End}_d^0(\huaV)$ given by $$\delta(\phi)=\phi \circ \partial + \partial \circ \phi, \quad \forall ~\phi \in {\rm End}^{-1}(\huaV).$$
Then ${\rm End}^{-1}(\huaV)\stackrel{\delta}{\longrightarrow}{\rm End}_\partial^0(\huaV)$ is a strict Lie $2$-algebra, which we denote by ${\rm End}(\huaV)$.

\begin{defi}{\rm (\cite{shengzhu2})}
A {\bf strict representation} of a strict Lie $2$-algebra $\g$ on a $k$-term complex of vector spaces $\huaV$ is a strict homomorphism $\rho=(\rho_0, \rho_1)$ from $\g$ to the strict Lie $2$-algebra ${\rm End}(\huaV)$. We denote a strict representation by $(\huaV; \rho)$.
\end{defi}

Let $\huaV:V_{-1} \stackrel{\partial}{\longrightarrow} V_0$ be a strict representation of the strict Lie $2$-algebra $\g$. 
Define a strict Lie $2$-algebra structure on $\g\oplus \huaV$, in which the degree 0 part is $\g_0 \oplus V_0$, the degree 1 part is $\g_{-1} \oplus V_{-1}$, the differential is $\mathfrak{d}+\partial:\g_{-1} \oplus V_{-1} \longrightarrow \g_0 \oplus V_0$, and for all $x,y\in \g_0, h\in \g_{-1}, u,v\in V_0, m\in V_{-1}$, $\mathfrak{l}_2^s$ is given by
\begin{eqnarray}
\mathfrak{l}_2^s(x+u, y+v) &=& \mathfrak{l}_2(x, y)+\rho_0(x)v-\rho_0(y)u,\\
\mathfrak{l}_2^s(x+u, h+m) &=& \mathfrak{l}_2(x, h)+\rho_0(x)m-\rho_1(h)u.
\end{eqnarray}
Then there is a semidirect product strict Lie $2$-algebra $\g\ltimes\huaV$.

For a strict representation $(\huaV; \rho)$ of a strict Lie $2$-algebra $\g$, and let $\huaV^*:V_0^*\stackrel{\partial^*}{\longrightarrow}V_{-1}^*\stackrel{\partial^*}{\longrightarrow}\cdots \stackrel{\partial^*}{\longrightarrow}V_{-k+1}^*$ be the dual complex of $\huaV$. Define $\rho_0^*:\g_0\longrightarrow {\rm End}_{\partial^*}^0(\huaV^*)$ and $\rho_1^*:\g_{-1}\longrightarrow {\rm End}^{-1}(\huaV^*)$ by
\begin{eqnarray*}
\left\langle\rho_0^*(x)u^*, v\right\rangle&=&-\left\langle u^*, \rho_0(x)v\right\rangle, \quad \forall~u^*\in V_i^*, v\in V_i,\\
\left\langle\rho_1^*(h)u^*, v\right\rangle&=&-\left\langle u^*, \rho_1(h)v\right\rangle, \quad \forall~u^*\in V_i^*, v\in V_{i+1}.
\end{eqnarray*}
Then $\rho^*=(\rho_0^*, \rho_1^*)$ is a strict representation of the strict Lie $2$-algebra of $\g$ on $\huaV^*$, which is called the {\bf dual representation} of the representation $(\huaV;\rho)$.

For two complexes of vector spaces
$\huaV:V_{-k+1}\stackrel{\partial_V}{\longrightarrow}\cdots\stackrel{\partial_V}{\longrightarrow}V_0$
and
$\huaW:W_{-m+1}\stackrel{\partial_W}{\longrightarrow}\cdots\stackrel{\partial_W}{\longrightarrow}W_0$,
their tensor product $\huaV\otimes\huaW$ is also a complex of vector
spaces. The degree $-n$ part $(\huaV\otimes\huaW)_{-n}$ is given by
$$
(\huaV\otimes\huaW)_{-n}=\sum_{i+j=-n}V_i\otimes W_j,
$$
and $\partial$ is the tensor product of $\partial_V$ and
$\partial_W$, i.e.,
\begin{equation}\label{eq:tensor}
\partial(v\otimes w)=(\partial_V\otimes1+(-1)^{|v|+1}1\otimes\partial_W)(v\otimes w)=\partial_Vv\otimes
w+(-1)^{|v|+1}v\otimes\partial_Ww,
\end{equation}
for any $v\in\huaV$ and $w\in\huaW$. Furthermore, let $(\huaV; \rho^V)$ and $(\huaV; \rho^W)$ be two strict representations of $\g$. Then the tensor product $(\huaV\otimes\huaW; \rho)$ is also a strict representation of $\g$, where $\rho=(\rho_0, \rho_1)$ is given by $$\rho_0=\rho_0^V\otimes 1+1\otimes \rho_0^W, \quad \rho_1=\rho_1^V\otimes 1+1\otimes \rho_1^W.$$

The {\bf adjoint representation} of $\g$ on itself is denoted by $\ad=(\ad_0, \ad_1)$, with 
$$\ad_0(x)=[x, \cdot] \in \End_\partial^0(\g), \quad \ad_1(h)=[h, \cdot] \in \End^{-1}(\g),$$
 which is a strict representation. The dual representation of $\g$ on $\g^*$ is called the {\bf coadjoint representation} and denoted by $\ad^*=(\ad_0^*, \ad_1^*)$.

The cohomology complex for a strict Lie $2$-algebra $\g$ with a $k$-term strict representation $(\huaV; \rho)$ is given as follows. 

Given a  strict representation $(\huaV; \rho)$ of a strict Lie $2$-algebra $\g$, we have the corresponding generalized Chevalley-Eilenberg complex $(C^n(\g,\huaV),D)$, where the $n$-cochains $C^n(\g,\huaV)$ is defined by
$$C^{n}(\g,\huaV)=\oplus_{n+1} (\odot^k\g^*[-1]) \otimes V_{n+1-k},$$
and the coboundary operator $D$ is  defined by
 $$D=\hat{\mathfrak{\frkd}}+\bar{\mathfrak{d}}+\hat{\partial}:C^n(\g, \huaV)\longrightarrow C^{n+1}(\g, \huaV),$$
where the operator $\hat{\frkd}: \Hom((\land^p\g_0)\land(\odot^q\g_{-1}), V_s)\longrightarrow \Hom((\land^{p-1}\g_0)\land(\odot^{q+1}\g_{-1}), V_s)$ is of degree $1$ is defined by
 $$\hat{\frkd}(f)(x_1, \cdots, x_{p-1}, h_1, h_2, \cdots, h_{q+1})=(-1)^p(f(x_1, \cdots, x_{p-1}, \frkd h_1, h_2, \cdots, h_{q+1})+{\rm c.p.} (h_1, \cdots, h_{q+1})),$$ where $f \in C^p(\g, \huaV)$, $x \in \g_0$ and $h_j \in \g_{-1}$, the operator $\hat{\partial}: \Hom((\land^p\g_0)\land(\odot^q\g_{-1}), V_s)\longrightarrow \Hom((\land^{p}\g_0)\land(\odot^q\g_{-1}), V_{s+1})$ is of degree $1$ is defined by 
  $$\hat{\partial}(f)=(-1)^{p+2q}\partial\circ f,$$ 
 and the operator $\bar{\frkd}=(\bar{\frkd}^{(1,0)}, \bar{\frkd}^{(0,1)})$, where $\bar{\frkd}^{(1,0)}:\Hom((\land^p\g_0)\land(\odot^q\g_{-1}), V_s)\longrightarrow \Hom((\land^{p+1}\g_0)\land(\odot^{q}\g_{-1}), V_s)$ is defined by 
\begin{eqnarray*}
&&\bar{\frkd}^{(1,0)}(f)(x_1, \cdots, x_{p+1}, h_1, \cdots, h_q)\\
&=&\sum_{i=1}^{p+1}(-1)^{i+1}\rho_0(x_i)f(x_1, \cdots, \hat{x_i}, \cdots, x_{p+1}, h_1, \cdots, h_q)\\
&&+\displaystyle\sum_{1\leq i<j\leq p+1}(-1)^{i+j}f([x_i, x_j], x_1, \cdots, \hat{x_i}, \cdots, \hat{x_j}, \cdots, x_{p+1}, h_1, \cdots, h_q)\\
&&+\displaystyle\sum_{1\leq i\leq p+1, 1\leq j\leq q}(-1)^{i}f(x_1, \cdots, \hat{x_i}, \cdots, x_{p+1}, h_1, \cdots, [x_i, h_j], \cdots, h_q),
\end{eqnarray*}
and $\bar{\frkd}^{(0,1)}:\Hom((\land^p\g_0)\land(\odot^q\g_{-1}), V_s)\longrightarrow \Hom((\land^{p}\g_0)\land(\odot^{q+1}\g_{-1}), V_{s-1})$ is defined by
\begin{eqnarray*}
&&\bar{\frkd}^{(0,1)}(f)(x_1, \cdots, x_p, h_1, \cdots, h_{q+1})\\
&=&\sum_{i=1}^{q+1}(-1)^{p}\rho_1(h_i)f(x_1, \cdots, x_p, h_1, \cdots, \hat{h_i}, \cdots, h_{q+1}).
\end{eqnarray*}
The generalized
Chevalley-Eilenberg complex can be explicitly given by
\begin{eqnarray*}
 && V_{-k+1}\stackrel{D}{\longrightarrow} V_{-k+2}\oplus
  \Hom(\frkg_{0},V_{-k+1})\stackrel{D}{\longrightarrow}\\
&&  V_{-k+3} \oplus\Hom(\frkg_{0},V_{-k+2})\oplus
  \Hom(\frkg_{-1},V_{-k+1})\oplus\Hom(\wedge  ^2\frkg_0,V_{-k+1})\stackrel{D}{\longrightarrow}\\
 &&V_{-k+4}\oplus
  \Hom(\wedge  ^2\frkg_0,V_{-k+2})\oplus\Hom(\frkg_{-1},V_{-k+2})\oplus\Hom(\wedge  ^3\frkg_0,V_{-k+1})\oplus\Hom(\frkg_0\wedge \frkg_{-1},V_{-k+1})\\
 && \stackrel{D}{\longrightarrow}\cdots.
\end{eqnarray*}
See \cite{BSZ,shengzhu1,shengzhu2} for more details on representations and cohomology of Lie 2-algebras.

\section{Symplectic strict Lie 2-algebras, strict pre-Lie 2-algebras and their representations}\label{sec:Pre-Lie 2-algebras}
In this section, we first recall the basic notions and properties of strict pre-Lie 2-algebras. Then we give the notion of a symplectic structure on a strict Lie 2-algebra and show that a symplectic strict Lie 2-algebra can give rise to a strict pre-Lie 2-algebra. We also give the constructions of strict pre-Lie 2-algebras through $\huaO$-operators on strict Lie 2-algebras, Rota-Baxter operators of weight $\lambda$ on strict associative 2-algebras and derivations on strict commutative associative 2-algebras. Finally, we give the representations of strict pre-Lie 2-algebras, which can give the semidirect product constructions of strict pre-Lie $2$-algebras.

\subsection{Symplectic strict Lie 2-algebras and constructions of strict pre-Lie 2-algebras}

\begin{defi}{\rm (\cite{Sheng19})}\label{defi:2-term pre}
	A {\bf pre-Lie $2$-algebra} is a $2$-term graded vector spaces $\huaA=A_0\oplus A_{-1}$, together with linear maps $\dM:A_{-1}\longrightarrow A_0$, $\cdot:A_i\otimes A_j\longrightarrow A_{i+j}$, $-1\leq i+j\leq 0$, and $l_3:\wedge^2A_0\otimes A_0\longrightarrow A_{-1}$, such that for all $x,x_i\in A_0$ and $a,b\in A_{-1}$, we have
	\begin{itemize}
		\item[\rm($a_1$)] $\dM (x\cdot a)=x\cdot \dM a,$
		\item[\rm($a_2$)] $ \dM(a\cdot x)=(\dM a)\cdot x,$
		\item[\rm($a_3$)]$ \dM a\cdot b=a\cdot \dM b,$
		\item[\rm($b_1$)]$x_0\cdot(x_1\cdot x_2)-(x_0\cdot x_1)\cdot x_2-x_1\cdot(x_0\cdot x_2)+(x_1\cdot x_0)\cdot x_2=\dM l_3(x_0,x_1,x_2),$
		
		\item[\rm($b_2$)]$x_0\cdot(x_1\cdot a)-(x_0\cdot x_1)\cdot a-x_1\cdot(x_0\cdot a)+(x_1\cdot x_0)\cdot a=l_3(x_0,x_1,\dM a),$
		
		\item[\rm($b_3$)]$a\cdot(x_1\cdot x_2)-(a\cdot x_1)\cdot x_2-x_1\cdot(a\cdot x_2)+(x_1\cdot a)\cdot x_2=l_3(\dM  a,x_1,x_2),$
		
		\item[\rm(c)]\begin{eqnarray*}
			&&x_0\cdot l_3(x_1,x_2,x_3)- x_1\cdot l_3(x_0,x_2,x_3)+ x_2\cdot l_3(x_0,x_1,x_3)\\
			&&+l_3(x_1,x_2,x_0)\cdot x_3- l_3(x_0,x_2,x_1)\cdot x_3+ l_3(x_0,x_1,x_2)\cdot x_3\\&&-l_3( x_1,x_2,x_0\cdot x_3)+l_3( x_0,x_2,x_1\cdot x_3)-l_3(x_0,x_1, x_2\cdot x_3)\\
			&&-l_3(x_0\cdot x_1-x_1\cdot  x_0, x_2,x_3)+l_3(x_0\cdot x_2-x_2\cdot x_0,x_1,x_3)-l_3(x_1\cdot x_2-x_2\cdot x_1,x_0,x_3)=0.
		\end{eqnarray*}
	\end{itemize}
\end{defi}
 Usually, we denote a pre-Lie $2$-algebra by $(A_0, A_{-1}, \dM, \cdot, l_3)$, or simply by $\huaA$. A pre-Lie $2$-algebra $(A_0, A_{-1}, \dM, \cdot, l_3)$ is said to be {\bf strict} if $l_3=0$.
 
 Given a pre-Lie $2$-algebra  $(A_0,A_{-1},\dM,\cdot,l_3)$, we
 define $\frkl_2:A_i\wedge A_j\longrightarrow A_{i+j}$ and $\frkl_3:\wedge^3A_0\longrightarrow A_{-1}$ by
 \begin{eqnarray}
 	\label{eq:l21}\frkl_2(x,y)&=&x\cdot y-y\cdot x,\\
 	\label{eq:l22} \frkl_2(x,a)&=&-\frkl_2(a,x)=x\cdot a-a\cdot x,\\
 	\label{eq:l3}\frkl_3(x,y,z)&=&l_3(x,y,z)+l_3(y,z,x)+l_3(z,x,y).
 \end{eqnarray}
 Furthermore, define $L_0:A_0\longrightarrow\End(A_0)\oplus \End(A_{-1})$ by
 \begin{equation}\label{eq:L0}
 	L_0(x)y=x\cdot y,\quad L_0(x)a=x\cdot a.
 \end{equation}
 Define $L_1:A_{-1}\longrightarrow\Hom(A_0,A_{-1})$ by
 \begin{equation}\label{eq:L1}
 	L_1(a)x=a\cdot x.
 \end{equation}
 Define $L_2:\wedge^2A_0\longrightarrow \Hom(A_0,A_{-1})$ by
 \begin{equation}\label{eq:L2}
 	L_2(x,y)z=-l_3(x,y,z),\quad \forall x,y,z\in A_0.
 \end{equation}
 \begin{thm}\label{thm:main1}{\rm (\cite{Sheng19})}
 	Let $\huaA=(A_0,A_{-1},\dM,\cdot,l_3)$ be a pre-Lie $2$-algebra. Then $(A_0,A_{-1},\dM,\frkl_2,\frkl_3)$ is a Lie $2$-algebra, which we denote by $\huaG(\huaA)$, where $\frkl_2$ and $\frkl_3$ are given by \eqref{eq:l21}-\eqref{eq:l3} respectively. Furthermore, $(L_0,L_1,L_2)$ is a representation of the Lie $2$-algebra $\huaG(\huaA)$ on the complex of vector spaces $A_{-1}\stackrel{\dM}{\longrightarrow} A_0$, where $L_0,L_1,L_2$ are given by \eqref{eq:L0}-\eqref{eq:L2} respectively.
 \end{thm}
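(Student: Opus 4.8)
The plan is to prove both assertions by directly unwinding the defining identities, organizing the computation so that each axiom of the target structure is matched with one (or two) of the pre-Lie $2$-algebra axioms $(a_1)$--$(c)$ of Definition \ref{defi:2-term pre}. Two structural observations make the bookkeeping manageable. First, $l_3 \in \Hom(\wedge^2 A_0 \otimes A_0, A_{-1})$ is skew-symmetric in its first two slots, so every occurrence of the totally skew-symmetric $\frkl_3$ defined in \eqref{eq:l3} can be rewritten through $l_3$ with a controlled argument order. Second, $(b_1)$ says precisely that the associator of ``$\cdot$'' fails to be left-symmetric by the coboundary $\dM l_3$, which is the 2-categorical remnant of the ordinary pre-Lie identity.

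\textbf{Part 1: $\huaG(\huaA)$ is a Lie $2$-algebra.} I would check conditions (i)--(iv) of Definition \ref{Lie 2-algebra} in turn. Condition (i) is immediate: applying $\dM$ to $\frkl_2(x,a)=x\cdot a-a\cdot x$ and using $(a_1),(a_2)$ yields $\frkl_2(x,\dM a)$, while the second half of (i) is exactly $(a_3)$ after expanding $\frkl_2(\dM a,b)$ and $\frkl_2(a,\dM b)$. Condition (ii) states that the Jacobiator of the sub-adjacent bracket $\frkl_2$ on $\g_0$ equals $\dM\frkl_3$; expanding $\frkl_2(x,\frkl_2(y,z))+\mbox{c.p.}$ into associators and invoking $(b_1)$ turns the failure of left-symmetry into $\dM$ of the cyclic sum \eqref{eq:l3}. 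Condition (iii) is the mixed analog and follows the same way from $(b_2)$ and $(b_3)$, once the skew-symmetry of $l_3$ is used to move $\dM a$ into its first or last slot. Condition (iv), the Jacobiator identity, carries the real content: it is a term-by-term consequence of the coherence axiom $(c)$ after every $\frkl_3$ is expanded via \eqref{eq:l3} and $(c)$ is summed over the relevant permutations.

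\textbf{Part 2: $(L_0,L_1,L_2)$ is a representation.} A representation of $\huaG(\huaA)$ on the complex $A_{-1}\stackrel{\dM}{\longrightarrow}A_0$ is governed by conditions mirroring (i)--(iv), with $L_0,L_1$ playing the role of $\frkl_2$ and $L_2$ the role of $\frkl_3$ inside the associated semidirect-product Lie $2$-algebra. I would verify these by the same dictionary. The differential compatibilities $L_0(x)\circ\dM=\dM\circ L_0(x)$, $\dM\circ L_1(a)=L_0(\dM a)|_{A_0}$ and $L_1(a)\circ\dM=L_0(\dM a)|_{A_{-1}}$ are literally $(a_1),(a_2),(a_3)$. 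The condition measuring the failure of $L_0$ to be a strict action splits over $A_0$ and $A_{-1}$ and reduces, using $L_2(x,y)z=-l_3(x,y,z)$ and $L_2(x,y)\dM a=-l_3(x,y,\dM a)$, to $(b_1)$ and $(b_2)$ respectively. The compatibility between $L_0$ and $L_1$ reduces to $(b_3)$ after the same skew-symmetry manipulation of $l_3$, and the top coherence condition reduces once more to $(c)$.

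\textbf{Main obstacle.} The routine part consists of the differential and quadratic compatibilities, each a one-line consequence of $(a_1)$--$(b_3)$. The genuine difficulty is the two coherence identities --- the Jacobiator (iv) in Part 1 and the top representation condition in Part 2 --- which must both be extracted from the single long axiom $(c)$. The obstacle is purely combinatorial: $(c)$ is phrased in terms of $l_3$ with a fixed ordering of arguments, whereas the targets are phrased through the fully skew-symmetric $\frkl_3$, so one must expand, symmetrize, and cancel, using only the skew-symmetry of $l_3$ in its first two slots together with $(b_1)$--$(b_3)$ to convert mixed expressions such as $l_3(\,\cdot\,,\dM(-),\,\cdot\,)$ into ordinary products. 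I expect all signs to be consistent once the semidirect-product conventions of \eqref{eq:tensor} and the coboundary $\delta$ are fixed, but tracking them through the antisymmetrization is the step most prone to error.
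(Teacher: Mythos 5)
The paper itself contains no proof of this theorem: it is imported verbatim from \cite{Sheng19}, so there is no internal argument to compare your plan against. That said, your plan is exactly the standard direct verification used in the cited reference, and your dictionary between axioms is correct: $(a_1)$--$(a_3)$ of Definition \ref{defi:2-term pre} give condition (i) of Definition \ref{Lie 2-algebra} as well as the chain-map compatibilities for $(L_0,L_1)$; $(b_1)$ gives condition (ii); $(b_2)$ and $(b_3)$, combined with the skew-symmetry of $l_3$ in its first two slots, give condition (iii) and the mixed $L_0$--$L_1$ compatibility; and $(c)$ is the source of both coherence identities. One remark that shrinks your ``main obstacle'' by half: axiom $(c)$, with its last argument $x_3$ read as the module variable, is already \emph{term for term} the top coherence condition for the representation $(L_0,L_1,L_2)$ (its summands are precisely of the forms $x_i\cdot l_3(\cdot,\cdot,x_3)$, $l_3(\cdot,\cdot,\cdot)\cdot x_3$, $l_3(\cdot,\cdot,x_i\cdot x_3)$ and $l_3(\frkl_2(x_i,x_j),\cdot,x_3)$), so no antisymmetrization is needed in Part 2; only the Jacobiator (iv) in Part 1 genuinely requires summing instances of $(c)$ over permutations of the four arguments, and that is where the sign-tracking you flag actually occurs.
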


\begin{cor}
Let $\huaA=(A_0,A_{-1},\dM,\cdot)$ be a strict pre-Lie $2$-algebra. Then $(A_0,A_{-1},\dM,\frkl_2)$ is a strict Lie $2$-algebra, which we also denote by $\huaG(\huaA)$, where $\frkl_2$ is given by \eqref{eq:l21} and \eqref{eq:l22}. Furthermore, $(L_0,L_1)$ is a strict representation of the strict Lie $2$-algebra $\huaG(\huaA)$ on the complex of vector spaces $A_{-1}\stackrel{\dM}{\longrightarrow} A_0$, where $L_0$ and $L_1$ are given by \eqref{eq:L0} and \eqref{eq:L1} respectively.

\end{cor}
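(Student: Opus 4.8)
The plan is to derive this corollary by specializing Theorem \ref{thm:main1} to the strict case $l_3=0$, and then to check that the degree-$2$ data of both the Lie $2$-algebra and its representation vanish, so that what remains is exactly a strict Lie $2$-algebra together with a strict representation.

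First I would apply Theorem \ref{thm:main1} to the underlying pre-Lie $2$-algebra $(A_0,A_{-1},\dM,\cdot,l_3)$ with $l_3=0$, obtaining that $(A_0,A_{-1},\dM,\frkl_2,\frkl_3)$ is a Lie $2$-algebra and that $(L_0,L_1,L_2)$ is a representation of it on $\huaV: A_{-1}\stackrel{\dM}{\longrightarrow}A_0$. Substituting $l_3=0$ into \eqref{eq:l3} gives $\frkl_3(x,y,z)=l_3(x,y,z)+l_3(y,z,x)+l_3(z,x,y)=0$, so $\frkl_3=0$ and $(A_0,A_{-1},\dM,\frkl_2)$ is a strict Lie $2$-algebra. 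Similarly $L_2(x,y)z=-l_3(x,y,z)=0$ by \eqref{eq:L2}, so the degree-$2$ part of the representation vanishes and only the pair $(L_0,L_1)$ survives.

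The substantive step is to confirm that this pair $(L_0,L_1)$ is a \emph{strict} representation, i.e.\ a strict homomorphism from $\huaG(\huaA)$ to $\End(\huaV)$ in the sense of Definition \ref{strict homomorphism}. I would first note that $L_0(x)\in\End_\dM^0(\huaV)$ by axiom $(a_1)$ (which says $L_0(x)$ commutes with $\dM$), and that each $L_1(a)\in\Hom(A_0,A_{-1})$ lies in $\End^{-1}(\huaV)$ automatically, since on a two-term complex the composite of two degree-$(-1)$ maps vanishes. Then I would verify the three conditions of Definition \ref{strict homomorphism}: condition (i), $L_0\circ\dM=\delta\circ L_1$, evaluated on $A_0$ and on $A_{-1}$ reduces precisely to axioms $(a_2)$ and $(a_3)$; condition (ii), $L_0(\frkl_2(x,y))=[L_0(x),L_0(y)]_C$, evaluated on $A_0$ and on $A_{-1}$ reduces to the associator identities $(b_1)$ and $(b_2)$ with $l_3=0$; and condition (iii), $L_1(\frkl_2(x,h))=[L_0(x),L_1(h)]_C$, reduces to axiom $(b_3)$ with $l_3=0$, where $\frkl_2$ is read off from \eqref{eq:l21} and \eqref{eq:l22}.

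I do not anticipate a genuine obstacle: each needed equation is simply one of the defining axioms of the strict pre-Lie $2$-algebra, re-expressed through the commutator $\frkl_2$ and the left-multiplications $L_0,L_1$. The only point requiring care is the bookkeeping of degrees and signs in the graded commutator $[\cdot,\cdot]_C$ on $\End(\huaV)$ when matching conditions (ii) and (iii) against \eqref{eq:l21}--\eqref{eq:l22}; since the degrees in play here yield the ordinary unsigned commutator, this reduces to a direct comparison.
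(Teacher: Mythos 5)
Your proposal is correct and follows the same route the paper intends: the paper states this corollary without proof, as an immediate specialization of Theorem \ref{thm:main1} to the case $l_3=0$, which forces $\frkl_3=0$ and $L_2=0$ exactly as you observe. Your additional direct check that $(L_0,L_1)$ satisfies the three conditions of Definition \ref{strict homomorphism} --- matching condition (i) with axioms $(a_2)$--$(a_3)$ (and $(a_1)$ for $L_0(x)\in\End_\dM^0(\huaV)$), condition (ii) with $(b_1)$--$(b_2)$, and condition (iii) with $(b_3)$ --- is accurate and simply fills in the verification the paper leaves implicit.
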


\begin{pro}\label{pro:semi-product}
Let $\huaA=(A_0,A_{-1},\dM,\cdot)$ be a strict pre-Lie $2$-algebra. Define $\rho:A_0\rightarrow \gl(A_{-1})$ and $\mu:A_0\rightarrow \gl(A_{-1})$ by
$$\rho(x)(a)=x\cdot a,\quad \mu(x)(a)=a\cdot x,\quad x\in A_0,a\in A_{-1}.$$
Then $(A_{-1};\rho,\mu)$ is a representation of the pre-Lie algebra $A_0$ and thus we obtain a semi-product pre-Lie algebra $A_{0}\ltimes_{(\rho, \mu)}  A_{-1}$ with the pre-Lie operation defined by
\begin{equation}\label{eq:semi-product}
  (x+a)\ast(y+b)=x\cdot y+\rho(x)b+\mu(y)a,\quad x,y\in A_0,a,b\in A_{-1}.
\end{equation}
\end{pro}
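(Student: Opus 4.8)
The plan is to recognize that both assertions are governed by the defining axioms $(b_1)$, $(b_2)$ and $(b_3)$ of a strict pre-Lie $2$-algebra (those of Definition \ref{defi:2-term pre} with $l_3=0$), each matching one ingredient of the definition of a representation of a pre-Lie algebra. First I would check that $(A_0,\cdot)$ is itself a pre-Lie algebra: restricting the multiplication to $A_0\otimes A_0$ and reading condition $(b_1)$ with $l_3=0$ gives precisely the symmetry of the associator in its first two arguments, so $A_0$ is pre-Lie with sub-adjacent Lie algebra $\huaG(A_0)$ carrying the bracket $[x,y]=x\cdot y-y\cdot x$.

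Next I would verify the two conditions that make $(A_{-1};\rho,\mu)$ a representation. For $\rho$, a direct expansion gives $\rho(x)\rho(y)a-\rho(y)\rho(x)a=x\cdot(y\cdot a)-y\cdot(x\cdot a)$ and $\rho([x,y])a=(x\cdot y)\cdot a-(y\cdot x)\cdot a$; equating these is exactly condition $(b_2)$ with $l_3=0$, so $\rho$ is a representation of $\huaG(A_0)$ on $A_{-1}$. For $\mu$, expanding the two sides of \eqref{representation of pre-Lie algebra} yields $\rho(x)\mu(y)a-\mu(y)\rho(x)a=x\cdot(a\cdot y)-(x\cdot a)\cdot y$ and $\mu(x\cdot y)a-\mu(y)\mu(x)a=a\cdot(x\cdot y)-(a\cdot x)\cdot y$; after a single overall sign flip their equality is precisely condition $(b_3)$ with $l_3=0$. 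Hence $(\rho,\mu)$ meets all requirements of a representation of the pre-Lie algebra $A_0$ on $A_{-1}$.

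Finally, the semidirect product claim follows from the general principle that any representation $(V;\rho,\mu)$ of a pre-Lie algebra $(A,\cdot_A)$ produces a pre-Lie structure on $A\oplus V$ via $(x+u)\ast(y+v)=x\cdot_A y+\rho(x)v+\mu(y)u$; applying this with $A=A_0$ and $V=A_{-1}$ gives the operation \eqref{eq:semi-product}. I expect the only genuine bookkeeping to lie in this last verification: checking that $\ast$ is pre-Lie amounts to splitting the associator of $\ast$ into its $A_0$-valued and $A_{-1}$-valued components, where the $A_0$-component is the pre-Lie identity on $A_0$, and the $A_{-1}$-valued components reproduce, respectively, the $\rho$-image of the pre-Lie identity on $A_0$, the Lie-representation property of $\rho$, and the compatibility \eqref{representation of pre-Lie algebra} for $\mu$. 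Since each of these has just been established, the identity holds, and no real obstacle arises beyond careful tracking of signs and argument positions.
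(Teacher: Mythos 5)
Your proof is correct. In fact the paper states Proposition \ref{pro:semi-product} without any proof, so there is no argument to compare against; what you have written is precisely the routine verification the authors leave implicit: axiom $(b_1)$ of Definition \ref{defi:2-term pre} (with $l_3=0$) makes $(A_0,\cdot)$ a pre-Lie algebra, axiom $(b_2)$ is exactly the statement that $\rho$ is a representation of the sub-adjacent Lie algebra $\huaG(A_0)$ on $A_{-1}$, axiom $(b_3)$ is exactly the compatibility \eqref{representation of pre-Lie algebra} for $\mu$, and the passage to $A_{0}\ltimes_{(\rho,\mu)}A_{-1}$ is the standard semidirect-product construction for pre-Lie algebras (as in \cite{Left-symmetric bialgebras,Bai21}). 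One cosmetic remark on your last paragraph: the three $A_{-1}$-valued components of the symmetrized associator of $\ast$ are not three distinct conditions — the component in the third slot is the Lie-representation property of $\rho$ (which here coincides with $(b_2)$), while the components in the first and second slots are both instances of \eqref{representation of pre-Lie algebra} — but since every identity you invoke has been established, the verification goes through as claimed.
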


\begin{defi}
 Let    $\huaA=(A_0,A_1,\dM,\cdot,l_3)$ and $\huaA'=(A_0',A_1',\dM',\cdot',l_3')$ be pre-Lie $2$-algebras. A {\bf homomorphism} $(F_0,F_1,F_2)$ from $\huaA$ to $\huaA'$ consists of linear maps $F_0:A_0\longrightarrow A_0'$,  $F_1:A_1\longrightarrow A_1'$, and $F_2:A_0\otimes A_0\longrightarrow A_1'$ such that the following equalities hold:
 \begin{itemize}
  \item[\rm(i)] $F_0\circ \dM=\dM'\circ F_1,$
   \item[\rm(ii)] $F_0(u\cdot v)-F_0(u)\cdot'F_0( v)=\dM'F_2(u,v),$
    \item[\rm(iii)] $F_1(u\cdot m)-F_0(u)\cdot'F_1( m)=F_2(u,\dM m),\quad F_1( m\cdot u)-F_1( m)\cdot'F_0(u)=F_2(\dM m,u),$
     \item[\rm(iv)] $F_0(u)\cdot'F_2(v,w)-F_0(v)\cdot'F_2(u,w)+F_2(v,u)\cdot'F_0(w)-F_2(u,v)\cdot'F_0(w)-F_2(v,u\cdot w)\\+F_2(u,v\cdot w)-F_2(u\cdot v ,w)+F_2(v\cdot u ,w)+l_3'(F_0(u),F_0(v),F_0(w))-F_1l_3(u,v,w)=0$.
 \end{itemize}
\end{defi}

 For a strict Lie $2$-algebra $\g$ with a trivial representation $\rho=0$ on the complex $\K_{-1}\stackrel{0}{\longrightarrow} \K_0$ with $\K_{-1}=\K_0=\K$, now the generalized
Chevalley-Eilenberg complex can be explicitly given by
\begin{eqnarray*}
 &&\Hom(\frkg_{0},\K_{-1})\stackrel{\huaD}{\longrightarrow} \Hom(\frkg_{0},\K_0)\oplus \Hom(\frkg_{-1},\K_{-1})\oplus\Hom(\wedge^2\frkg_0,\K_{-1})
  \stackrel{\huaD}{\longrightarrow}\\
&&  \Hom(\wedge^2\frkg_0,\K_0)\oplus\Hom(\frkg_0\wedge \g_{-1},\K_{-1})\oplus\Hom(\frkg_{-1},\K_0)\oplus
  \Hom(\wedge^3 \g_0,\K_{-1})\stackrel{\huaD}{\longrightarrow}\\
 &&\Hom(\wedge^3\frkg_0,\K_0)\oplus\Hom(\wedge^{2}\frkg_{-1},\K_{-1})\oplus
  \Hom(\wedge^2\frkg_0\wedge \g_{-1},\K_{-1})\oplus\Hom(\frkg_0\wedge \g_{-1},\K_{0})\\
  &&\oplus \Hom(\wedge^4\frkg_0,\K_{-1})\stackrel{\huaD}{\longrightarrow} \cdots.
\end{eqnarray*}

A $2$-cochain $\omega=(\omega_1,\omega_2)\in  \Hom(\wedge^2\frkg_0,\K_0)\oplus\Hom(\frkg_0\wedge \g_{-1},\K_{-1})$ is closed, that is, $\huaD \omega=0$ if and only if for $x,y,z\in\g_0$, $h,k\in\g_{-1}$, the following equalities hold:
\begin{eqnarray}
\label{eq:closed1}\omega_1([x,y],z)+\omega_1([y,z],x)+\omega_1([z,x],y)&=&0,\\
\label{eq:closed2}\omega_2([x,y],h)+\omega_2([h,x],y)+\omega_2([y,h],x)&=&0\\
\label{eq:closed3}\omega_1(x,\frkd h)=0,\quad \omega_2(h,\frkd k)=\omega_2(\frkd h,k).&& 
\end{eqnarray}

Recall a graded skew-symmetric bilinear form $\omega$ on a 2-term vector spaces $\g=\g_0\oplus \g_{-1}$ is a bilinear map $\omega:\g_i\otimes \g_j\rightarrow \K,~-1\leq i,j\leq 0$ satisfying
 \begin{equation}
\omega(u,v)=-(-1)^{|u||v|}\omega(v,u),\quad u,v\in \g.
 \end{equation}

 \begin{defi}
Let $\g$ be a  strict Lie $2$-algebra. A pair $(\omega_1,\omega_2)\in  \Hom(\wedge^2\frkg_0,\K_0)\oplus\Hom(\frkg_0\wedge \g_{-1},\K_{-1})$ is called a {\bf symplectic structure} on $\g$ if $(\omega_1,\omega_2)$ is a  graded skew-symmetric nondegenerate closed $2$-form. Furthermore, a  strict Lie $2$-algebra $\g$ with a symplectic structure  $(\omega_1,\omega_2)$ is called a {\bf symplectic strict Lie $2$-algebra}. We denote a symplectic strict Lie $2$-algebra by $(\g;(\omega_1,\omega_2))$.
\end{defi}

\begin{pro}\label{pro:symplectic strict L2-preL2}
Let $(\g;(\omega_1,\omega_2))$ be a symplectic strict Lie $2$-algebra. Define bilinear operations $\cdot:\g_0\otimes \g_0\rightarrow \g_0$, $\cdot:\g_0\otimes \g_{-1}\rightarrow\g_{-1}$ and $\cdot:\g_{-1}\otimes \g_0\rightarrow\g_{-1}$ by
\begin{eqnarray}
 \label{eq:sym-preL21} \omega_1(x\cdot y,z)&=&-\omega_1(y,[x,z]),\\
 \label{eq:sym-preL22} \omega_2(x\cdot a,y)&=&-\omega_2(a,[x,y]),\\
 \label{eq:sym-preL23} \omega_2(a\cdot x,y)&=&-\omega_2(y\cdot x,a)=-\omega_2(x,[a,y]),\quad x,y,z\in\g_0,a\in\g_{-1}.
\end{eqnarray}
Then $(\g_0,\g_{-1},\frkd,\cdot)$ is a strict pre-Lie $2$-algebra.
\end{pro}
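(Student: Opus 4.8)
The plan is to use the nondegeneracy of the pair $(\omega_1,\omega_2)$ to turn each defining axiom of a strict pre-Lie $2$-algebra into an identity between the brackets of $\g$ and the form $\omega$, and then to discharge that identity using the closedness conditions \eqref{eq:closed1}--\eqref{eq:closed3} together with the Lie $2$-algebra axioms (i)--(iii) of $\g$. First I would note that the three operations are well defined: since $\omega_1$ is nondegenerate on $\g_0$, \eqref{eq:sym-preL21} determines $x\cdot y\in\g_0$, and since $\omega_2$ is a perfect pairing between $\g_0$ and $\g_{-1}$, the relations \eqref{eq:sym-preL22} and \eqref{eq:sym-preL23} determine $x\cdot a,\,a\cdot x\in\g_{-1}$; one must also check that the two right-hand sides of \eqref{eq:sym-preL23} agree, i.e. that the degree-$0$ product pairs with $\g_{-1}$ through $\omega_2$ exactly as prescribed, which is what it means for $\omega_1$ and $\omega_2$ to assemble into a single nondegenerate form. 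The decisive preliminary step is then the pair of commutator identities
\[
x\cdot y-y\cdot x=[x,y],\qquad x\cdot a-a\cdot x=[x,a],\qquad x,y\in\g_0,\ a\in\g_{-1}.
\]
Each is proved by pairing the left-hand side against a test element of $\g_0$ (through $\omega_1$, respectively $\omega_2$) and applying the $2$-cocycle condition \eqref{eq:closed1}, respectively \eqref{eq:closed2}; nondegeneracy then yields the equality. These identities say that the subadjacent bracket $\frkl_2$ of the prospective pre-Lie $2$-algebra coincides with the bracket of $\g$, and it is precisely this that lets the Jacobi identity of $\g$ enter the later computations.

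For the axioms $(a_1)$--$(a_3)$ I would pair both sides with a test element and reduce to \eqref{eq:closed3} together with the compatibility $\frkd[x,h]=[x,\frkd h]$ of axiom (i). For instance $\omega_1(x\cdot\frkd a,z)=-\omega_1(\frkd a,[x,z])=0$ since $\omega_1$ annihilates $\Img\frkd$ by \eqref{eq:closed3}, and $\omega_1(\frkd(x\cdot a),z)=0$ for the same reason, giving $(a_1)$; $(a_2)$ is handled identically, and $(a_3)$ drops out of the second half of \eqref{eq:closed3} after rewriting $[\frkd a,y]=\frkd[a,y]$. The axioms $(b_1)$ and $(b_2)$ are the categorified form of Chu's computation: pairing the left-hand side with $z\in\g_0$ through $\omega_1$ (for $(b_1)$), respectively with $y\in\g_0$ through $\omega_2$ (for $(b_2)$), I would repeatedly strip off the products by \eqref{eq:sym-preL21}--\eqref{eq:sym-preL22}, use the degree-$0$ commutator identity to replace $x_0\cdot x_1-x_1\cdot x_0$ by $[x_0,x_1]$, and then let the Jacobi identity (ii) of $\g$ collapse the resulting bracket expression to zero before invoking nondegeneracy.

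The genuinely delicate case, where I expect the real obstacle, is $(b_3)$, which mixes the single $\g_{-1}$-entry $a$ with two $\g_0$-entries. Pairing the left-hand side with $y\in\g_0$ through $\omega_2$ and expanding by \eqref{eq:sym-preL22}--\eqref{eq:sym-preL23}, two of the four terms---those originating from $(a\cdot x_1)\cdot x_2$ and $(x_1\cdot a)\cdot x_2$---yield $\omega_2(x_2,[a\cdot x_1,y])$ and $\omega_2(x_2,[x_1\cdot a,y])$, in which a pre-Lie product is buried inside a Lie bracket and cannot be simplified by any of \eqref{eq:sym-preL21}--\eqref{eq:sym-preL23}. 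The key is not to treat these two terms in isolation but to combine them: their contribution is their difference $\omega_2\!\big(x_2,[\,a\cdot x_1-x_1\cdot a,\ y]\big)=\omega_2\!\big(x_2,[[a,x_1],y]\big)$, which by the mixed commutator identity is now a pure Lie-bracket expression. The other two terms reduce cleanly to $-\omega_2(x_2,[[a,y],x_1])$ and $-\omega_2(x_2,[a,[x_1,y]])$, so that the whole pairing becomes
\[
\omega_2\!\big(x_2,\ [[a,x_1],y]-[[a,y],x_1]-[a,[x_1,y]]\big),
\]
and the inner element vanishes by the graded Jacobi identity (iii) (with $\frkl_3=0$) applied to $x_1,y\in\g_0$ and $a\in\g_{-1}$. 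As $y$ is arbitrary and $\omega_2$ is nondegenerate, $(b_3)$ follows, and with it the claim that $(\g_0,\g_{-1},\frkd,\cdot)$ is a strict pre-Lie $2$-algebra.
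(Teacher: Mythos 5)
Your overall strategy coincides with the paper's: pair each pre-Lie $2$-algebra axiom against test elements, strip the products using \eqref{eq:sym-preL21}--\eqref{eq:sym-preL23}, and close with the cocycle conditions, the strict Jacobi identities and nondegeneracy. For $(b_1)$--$(b_3)$ your computation agrees with the paper's essentially line by line; in particular your handling of $(b_3)$ --- combining the contributions of $(a\cdot x_1)\cdot x_2$ and $(x_1\cdot a)\cdot x_2$ into $\omega_2\big(x_2,[[a,x_1],y]\big)$ via the mixed commutator identity and then invoking condition (iii) of Definition \ref{Lie 2-algebra} --- is exactly what the paper does. Promoting the commutator identities $x\cdot y-y\cdot x=[x,y]$ and $x\cdot a-a\cdot x=[x,a]$ to an explicit preliminary lemma is a genuine improvement in exposition: the paper uses them tacitly when it replaces $[x_0\cdot x_1,z]-[x_1\cdot x_0,z]$ by $[z,[x_1,x_0]]$ and $[a\cdot x_1,z]-[x_1\cdot a,z]$ by $[z,[x_1,a]]$.

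The one genuine weak point is your treatment of $(a_1)$ and $(a_2)$. Showing $\omega_1(\frkd(x\cdot a),z)=0=\omega_1(x\cdot\frkd a,z)$ for all $z\in\g_0$ identifies the two sides only modulo the radical of $\omega_1$ on $\g_0$. If you insist that $\omega_1$ itself is nondegenerate on $\g_0$, then \eqref{eq:closed3} (namely $\omega_1(x,\frkd h)=0$) forces $\Img\,\frkd=0$, so $\frkd=0$ and your argument ``works'' only because it proves both sides are zero --- the proposition degenerates to the classical one-term case. If instead only the total form $(\omega_1,\omega_2)$ is nondegenerate (the reading needed for the paper's later para-K\"ahler applications, where one even takes $\omega_1=0$), then elements of $\g_0$ are separated by their $\omega_2$-pairings with $\g_{-1}$, about which your argument says nothing, and the step fails. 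The paper proves $(a_1)$ by the chain
$\omega_2(\frkd(x\cdot a),b)=\omega_2(x\cdot a,\frkd b)=-\omega_2(a,[x,\frkd b])=-\omega_2(a,\frkd[x,b])=-\omega_2(\frkd a,[x,b])=\omega_2(x\cdot\frkd a,b)$,
using \eqref{eq:closed3}, condition (i) of Definition \ref{Lie 2-algebra}, \eqref{eq:sym-preL22}, and the middle equality of \eqref{eq:sym-preL23}; that middle equality is precisely the prescription for how a degree-$0$ product pairs with $\g_{-1}$ under $\omega_2$, which your well-definedness paragraph correctly identifies but then never uses. Replace your $(a_1)$/$(a_2)$ argument by this $\omega_2$-chain and its mirror; your $(a_3)$ argument via the second half of \eqref{eq:closed3} is fine as it stands.
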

\begin{proof}
	For any $x\in \g_0$, $a,b \in \g_{-1}$, by Condition (i) of the Definition \ref{Lie 2-algebra}, \eqref{eq:closed3}, \eqref{eq:sym-preL22} and \eqref{eq:sym-preL23}, we have
    \begin{eqnarray*}
    	\omega_2(\frkd(x\cdot a), b)&=&\omega_2(x\cdot a, \frkd b)=-\omega_2(a, [x, \frkd b])\\
        &=&-\omega_2(a, \frkd[x, b]))=-\omega_2(\frkd a, [x, b])=\omega_2(x\cdot \frkd a, b),
    \end{eqnarray*}
  which implies that $\frkd(x\cdot a)=x\cdot \frkd a$.
  
    Similarly, we can get $$\frkd(a\cdot x)=(\frkd a)\cdot x, \quad \frkd a\cdot b=a\cdot \frkd b.$$
    
    By \eqref{eq:sym-preL21}, we have
    \begin{eqnarray*}
    	\omega_1(x_0\cdot(x_1\cdot x_2), z)&=&-\omega_1(x_1\cdot x_2, [x_0, z])~=~\omega_1(x_2, [x_1, [x_0, z]]),\\
    	\omega_1((x_0\cdot x_1)\cdot x_2, z)&=&-\omega_1(x_2, [x_0\cdot x_1, z]),\\
    	\omega_1(x_1\cdot(x_0\cdot x_2), z)&=&-\omega_1(x_0\cdot x_2, [x_1, z])~=~\omega_1(x_2, [x_0, [x_1, z]]),\\
    	\omega_1((x_1\cdot x_0)\cdot x_2, z)&=&-\omega_1(x_2, [x_1\cdot x_0, z]).
    \end{eqnarray*}
 By Condition \rm(ii) of the Definition \ref{Lie 2-algebra} and non-degeneracy of $\omega_1$, we have 
    \begin{eqnarray*}
    	&&\omega_1(x_0\cdot(x_1\cdot x_2)- (x_0\cdot x_1)\cdot x_2-x_1\cdot(x_0\cdot x_2)+(x_1\cdot x_0)\cdot x_2, z)\\
    	&=&\omega_1(x_2, [x_1, [x_0, z]]+[x_0\cdot x_1, z]-[x_0, [x_1, z]]-[x_1\cdot x_0, z])\\
    	&=&\omega_1(x_2, [x_1, [x_0, z]]+[z, [x_1, x_0]]+[x_0, [z, x_1]])=0,
    \end{eqnarray*}
   which implies that
    \begin{eqnarray*}
    	x_0\cdot(x_1\cdot x_2)- (x_0\cdot x_1)\cdot x_2-x_1\cdot(x_0\cdot x_2)+(x_1\cdot x_0)\cdot x_2=0
    \end{eqnarray*}
    holds.
    
    Similarly, by 
    Conditions {\rm(ii)} and {\rm(iii)} of the Definition \ref{Lie 2-algebra}, we can get
    \begin{eqnarray*}
    	&&\omega_2(x_0\cdot(x_1\cdot a)-(x_0\cdot x_1)\cdot a-x_1\cdot(x_0\cdot a)+(x_1\cdot x_0)\cdot a, z)\\
    	&=&\omega_2(a, [x_1, [x_0, z]]+[x_0\cdot x_1, z]-[x_0, [x_1, z]]-[x_1\cdot x_0, z])\\
    	&=&\omega_2(a, [x_0, [z, x_1]]+[x_1, [x_0, z]]+[z, [x_1, x_0]])=0;\\
    	&&\omega_2(a\cdot(x_1\cdot x_2)-(a\cdot x_1)\cdot x_2-x_1\cdot(a\cdot x_2)+(x_1\cdot a)\cdot x_2, z)\\
    	&=&\omega_2(x_2, [x_1, [a, z]]+[a\cdot x_1, z]-[a, [x_1, z]]-[x_1\cdot a, z])\\
    	&=&\omega_2(x_2, [x_1, [a, z]]+[a, [z, x_1]]+[z, [x_1, a]])=0.
    \end{eqnarray*}
    Then by the non-degeneracy of $\omega_2$, we have
    \begin{eqnarray*}
    	x_0\cdot(x_1\cdot a)-(x_0\cdot x_1)\cdot a-x_1\cdot(x_0\cdot a)+(x_1\cdot x_0)\cdot a&=&0;\\
    	a\cdot(x_1\cdot x_2)-(a\cdot x_1)\cdot x_2-x_1\cdot(a\cdot x_2)+(x_1\cdot a)\cdot x_2&=&0.
    \end{eqnarray*}
    Therefore, $(\g_0,\g_{-1},\frkd,\cdot)$ is a strict pre-Lie $2$-algebra.
\end{proof}

Let $\g=(\g_0,\g_{-1}, \frkd, \frkl_2)$ be a strict Lie $2$-algebra and $(\rho_0,\rho_1)$ be a strict representation of $\g$ on a $2$-term complex of vector spaces $\huaV=V_{-1}\stackrel{\dM}{\longrightarrow}V_0$.
\begin{defi}\label{defi:O-operator}{\rm (\cite{Sheng19})}
  A pair $(T_0,T_1)$, where $T_0:V_0\longrightarrow \g_0$, $T_1:V_{-1}\longrightarrow \g_{-1}$ is a chain map, is called an  {\bf $\mathcal{O}$-operator} on $\g$ associated to the representation  $(\rho_0,\rho_1)$, if for all $u,v,v_i\in V_0$ and $m\in V_{-1}$ the following conditions are satisfied:
  \begin{itemize}
    \item[\rm(i)] $T_0\big(\rho_0(T_0u)v-\rho_0(T_0v)u\big)-\frkl_2(T_0u,T_0v)=0;$

    \item[\rm(ii)] $T_1\big(\rho_1(T_1m)v-\rho_0(T_0v)m\big)-\frkl_2(T_1m,T_0v)= 0$.
    \end{itemize}
    In particular, the $\mathcal{O}$-operator $(T_0,T_1)$  associated to the representation $(\g;\ad_0,\ad_1)$ is call a {\bf Rota-Baxter operator} on $\g$.
    \end{defi}

    \begin{pro}{\rm (\cite{Sheng19})}\label{pro:RB-Lie2}
      Let $(\rho_0,\rho_1)$ be a strict representation of $\g$ on $\huaV$ and $(T_0,T_1)$ an  $\mathcal{O}$-operator on $\g$ associated to the representation  $(\rho_0,\rho_1)$. Define a degree $0$ multiplication $\cdot:V_i\otimes V_j\longrightarrow V_{i+j}$, $-1\leq i+j\leq 0$, on $\huaV$ by
    \begin{equation}\label{eq:formularm}
      u\cdot v=\rho_0(T_0u)v,\quad u\cdot m=\rho_0(T_0u)m,\quad m\cdot u=\rho_1(T_1m)u.
    \end{equation}  
      Then, $(V_0,V_{-1},\dM,\cdot)$ is a strict pre-Lie  $2$-algebra.
    \end{pro}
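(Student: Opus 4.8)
The plan is to verify directly that the product \eqref{eq:formularm} satisfies the six defining identities $(a_1)$--$(a_3)$ and $(b_1)$--$(b_3)$ of a strict pre-Lie $2$-algebra (Definition \ref{defi:2-term pre} with $l_3=0$). The whole argument rests on first translating the hypotheses into operator identities on $\huaV$. Since $\rho=(\rho_0,\rho_1)$ is a strict homomorphism from $\g$ into $\End(\huaV)$, unwinding Definition \ref{strict homomorphism} yields: (I) $\rho_0(x)\circ\dM=\dM\circ\rho_0(x)$; (II) $\rho_0(\frkd h)=\rho_1(h)\circ\dM+\dM\circ\rho_1(h)$; (III) $\rho_0(\frkl_2(x,y))=\rho_0(x)\rho_0(y)-\rho_0(y)\rho_0(x)$; and (IV) $\rho_1(\frkl_2(x,h))=\rho_0(x)\rho_1(h)-\rho_1(h)\rho_0(x)$. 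In addition I will use that $(T_0,T_1)$ is a chain map, i.e. $\frkd\circ T_1=T_0\circ\dM$, together with the two defining identities of an $\mathcal{O}$-operator from Definition \ref{defi:O-operator}.

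The three identities $(a_1)$--$(a_3)$ express compatibility with the differential and follow purely from (I), (II) and the chain-map property. For $(a_1)$ I would simply compute $\dM(x\cdot a)=\dM\rho_0(T_0x)a=\rho_0(T_0x)\dM a=x\cdot\dM a$ using (I). For $(a_2)$ I would rewrite $\dM(a\cdot x)=\dM\rho_1(T_1a)x$ and recognise the right-hand side as the degree-$0$ component of $\rho_0(\frkd T_1a)$ applied to $x$ via (II) (the term $\rho_1(T_1a)\circ\dM$ vanishes on $V_0$); the chain-map identity $\frkd T_1a=T_0\dM a$ then produces $(\dM a)\cdot x$. The identity $(a_3)$ comes out the same way, this time reading off the degree-$(-1)$ component of (II) applied to $b\in V_{-1}$, where now the other summand $\dM\circ\rho_1(T_1a)$ vanishes.

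For $(b_1)$ and $(b_2)$ the two leading terms of the associator combine, by (III), into $\rho_0(\frkl_2(T_0x_0,T_0x_1))$ acting on $x_2$, respectively on $a$, while the two trailing terms combine, by the first $\mathcal{O}$-operator identity $T_0(\rho_0(T_0x_0)x_1-\rho_0(T_0x_1)x_0)=\frkl_2(T_0x_0,T_0x_1)$, into the very same operator; hence the associator vanishes. Both identities are settled by the identical computation, the only difference being whether $\rho_0$ acts on $V_0$ or on $V_{-1}$, which is harmless since $\rho_0$ is a degree-$0$ operator.

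The subtle case, and the one I expect to be the main obstacle, is $(b_3)$, where the mixed element $a\in V_{-1}$ occupies the left-most slot. Here the four terms split into two pairs of a genuinely different nature: the pair $\rho_1(T_1a)\rho_0(T_0x_1)x_2-\rho_0(T_0x_1)\rho_1(T_1a)x_2$, which by (IV) equals $-\rho_1(\frkl_2(T_0x_1,T_1a))x_2$, and the pair with $T_1$ applied to an inner argument, which by the second $\mathcal{O}$-operator identity $T_1(\rho_1(T_1a)x_1-\rho_0(T_0x_1)a)=\frkl_2(T_1a,T_0x_1)$ equals $-\rho_1(\frkl_2(T_1a,T_0x_1))x_2$. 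The final cancellation then forces me to invoke the graded skew-symmetry $\frkl_2(T_0x_1,T_1a)=-\frkl_2(T_1a,T_0x_1)$ to match the two contributions against one another. The delicate points to get right are the sign conventions in the commutator bracket of $\End(\huaV)$ that enter (IV) and the bookkeeping of which defining product ($x\cdot m$, $m\cdot u$, or $u\cdot v$) is used in each of the four terms; once these are fixed consistently, $(b_3)$ collapses to $0$ and, together with $(a_1)$--$(a_3)$ and $(b_1)$--$(b_2)$, the proof is complete.
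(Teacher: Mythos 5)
Your proposal is correct. Note that this paper does not actually prove the proposition — it is quoted from \cite{Sheng19} — so there is no internal proof to compare against; judged on its own terms, your direct verification is sound and complete in all essentials. The operator identities (I)--(IV) are exactly what the strict-homomorphism condition $\rho=(\rho_0,\rho_1):\g\to\End(\huaV)$ unwinds to, your use of the chain-map identity $T_0\circ\dM=\frkd\circ T_1$ together with the two components of (II) correctly disposes of $(a_1)$--$(a_3)$, the pairing of outer-composition terms via (III) against inner-argument terms via the first $\mathcal{O}$-operator identity settles $(b_1)$--$(b_2)$ uniformly, and in $(b_3)$ you correctly identified the one genuinely asymmetric step: the first pair closes up via (IV) to $-\rho_1(\frkl_2(T_0x_1,T_1a))x_2$, the second pair via the second $\mathcal{O}$-operator identity to $-\rho_1(\frkl_2(T_1a,T_0x_1))x_2$, and the graded skew-symmetry of $\frkl_2$ is precisely what makes these cancel. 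This is the natural (and presumably the original) proof of the statement.
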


\begin{defi}\label{defi:2-term ass}{\rm (\cite{Khm})}
	An {\bf associative  $2$-algebra} is a $2$-term graded vector spaces $\huaA=A_0\oplus A_{-1}$, together with linear maps $\dM:A_{-1}\longrightarrow A_0$, $\cdot:A_i\otimes A_j\longrightarrow A_{i+j}$, $-1\leq i+j\leq 0$, and $l_3:\wedge^2A_0\otimes A_0\longrightarrow A_{-1}$, such that for all $x,x_i\in A_0$ and $a,b\in A_{-1}$, we have
	\begin{itemize}
		\item[\rm($a_1$)] $\dM (x\cdot a)=x\cdot \dM a,$
		\item[\rm($a_2$)] $ \dM(a\cdot x)=(\dM a)\cdot x,$
		\item[\rm($a_3$)]$ \dM a\cdot b=a\cdot \dM b,$
		\item[\rm($b_1$)]$x_0\cdot(x_1\cdot x_2)-(x_0\cdot x_1)\cdot x_2=\dM l_3(x_0,x_1,x_2),$
		
		\item[\rm($b_2$)]$x_0\cdot(x_1\cdot a)-(x_0\cdot x_1)\cdot a=l_3(x_0,x_1,\dM a),$
        
    \item[\rm($b_3$)]$x_0\cdot(a\cdot x_1)-(x_0\cdot a)\cdot x_1=l_3(x_0,\dM a,x_1),$
        
		\item[\rm($b_4$)]$a\cdot(x_1\cdot x_2)-(a\cdot x_1)\cdot x_2=l_3(\dM  a,x_1,x_2),$
		
		\item[\rm(c)]\begin{eqnarray*}
			&&x_0\cdot l_3(x_1,x_2,x_3)+l_3(x_1,x_2,x_0)\cdot x_3\\&&-l_3( x_0,x_1,x_2\cdot x_3)+l_3( x_0,x_1\cdot x_2, x_3)-l_3(x_0\cdot x_1,x_2, x_3)=0.
		\end{eqnarray*}
	\end{itemize}
\end{defi}
Usually, we denote an associative $2$-algebra by $(A_0, A_{-1}, \dM, \cdot, l_3)$, or simply by $\huaA$. An associative $2$-algebra $(A_0, A_{-1}, \dM, \cdot, l_3)$ is said to be {\bf strict} if $l_3=0$.

\begin{defi}\label{a Rota-Baxter operator}
Let $\huaA$ be a strict associative $2$-algebra. A pair $(\huaR_0,\huaR_1)$, where $\huaR_0:A_0\longrightarrow A_0,\huaR_1:A_{-1}\longrightarrow A_{-1}$ is a chain map, is called a  {\bf Rota-Baxter operator of weight $\lambda$} on $\huaA$  if for all $x,y\in A_0$ and $a\in A_{-1}$, the following conditions are satisfied:
  \begin{itemize}
    \item[\rm(i)] $\huaR_0\big(\huaR_0(x)\cdot y+x\cdot \huaR_0(y)-\lambda (x\cdot y)\big)-\huaR_0(x)\cdot \huaR_0(y)=0;$

    \item[\rm(ii)] $\huaR_1\big(\huaR_0(x)\cdot a+x\cdot \huaR_1(a)-\lambda(x\cdot a)\big)-\huaR_0(x)\cdot \huaR_1(a)=0;$
    
    \item[\rm(iii)] $\huaR_1\big(\huaR_1(a)\cdot x+a\cdot \huaR_0(x)-\lambda(a\cdot x)\big)-\huaR_1(a)\cdot \huaR_0(x)=0$.
    \end{itemize}
\end{defi}

\begin{pro}
Let $(\huaR_0,\huaR_1)$ be a Rota-Baxter operator of weight $0$ on a strict associative $2$-algebra $\huaA$. 
Define a degree $0$ multiplication $\cdot_\huaR:A_i\otimes A_j\longrightarrow A_{i+j}$, $-1\leq i+j\leq 0$, on $\huaA$ by
    \begin{eqnarray}
      x\cdot_\huaR y&=&\huaR_0(x)\cdot y-y\cdot \huaR_0(x),\\
      x\cdot_\huaR a&=&\huaR_0(x)\cdot a-a\cdot \huaR_0(x),\\
      a\cdot_\huaR x&=&\huaR_1(a)\cdot x-x\cdot \huaR_1(a),\quad x,y\in A_0,a\in A_{-1}.
    \end{eqnarray}
    Then, $(A_0,A_{-1},\dM,\cdot_\huaR)$ is a strict pre-Lie  $2$-algebra.
\end{pro}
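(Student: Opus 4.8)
The plan is to route the proof through the $\mathcal{O}$-operator construction of Proposition \ref{pro:RB-Lie2}, rather than checking the axioms of Definition \ref{defi:2-term pre} directly. The guiding observation is that, writing $\frkl_2(x,y)=x\cdot y-y\cdot x$, $\frkl_2(x,a)=x\cdot a-a\cdot x$ and $\frkl_2(a,x)=-\frkl_2(x,a)$ for the commutator bracket of the associative product, the three defining formulas for $\cdot_\huaR$ are exactly
$$x\cdot_\huaR y=\frkl_2(\huaR_0(x),y),\qquad x\cdot_\huaR a=\frkl_2(\huaR_0(x),a),\qquad a\cdot_\huaR x=\frkl_2(\huaR_1(a),x).$$
These coincide with the formulas \eqref{eq:formularm} of Proposition \ref{pro:RB-Lie2} once $\rho$ is taken to be the adjoint representation $\ad$ and $(T_0,T_1)=(\huaR_0,\huaR_1)$. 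Hence it suffices to exhibit $(\huaR_0,\huaR_1)$ as a Rota-Baxter operator on a strict Lie $2$-algebra built from $\huaA$, i.e.\ as an $\mathcal{O}$-operator with respect to $\ad$; Proposition \ref{pro:RB-Lie2} then returns precisely $\cdot_\huaR$ and the conclusion.

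First I would supply the one ingredient the excerpt has not yet recorded: the commutator bracket $\frkl_2$ above makes the strict associative $2$-algebra $\huaA$ into a strict Lie $2$-algebra $\huaG$, the $2$-graded analogue of passing from an associative algebra to its commutator Lie algebra. Condition (i) of Definition \ref{Lie 2-algebra} is immediate from the chain-type axioms $(a_1)$--$(a_3)$ of Definition \ref{defi:2-term ass}; the graded Jacobi identities (ii) and (iii) (with $\frkl_3=0$) unwind, after using strict associativity (axioms $(b_1)$--$(b_4)$ with $l_3=0$) to drop all parenthesation, into sums of associative words that cancel in pairs. I would carry out identity (iii) once in full, as it is the only place where the signs attached to the degree $-1$ slot must be tracked.

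The heart of the argument is verifying the two $\mathcal{O}$-operator conditions of Definition \ref{defi:O-operator} for $(\huaR_0,\huaR_1)$ relative to $\ad$. Condition (i) lives in $A_0$; expanding $\huaR_0\big(\frkl_2(\huaR_0(x),y)-\frkl_2(\huaR_0(y),x)\big)-\frkl_2(\huaR_0(x),\huaR_0(y))$ and regrouping yields
$$\Big(\huaR_0\big(\huaR_0(x)\cdot y+x\cdot\huaR_0(y)\big)-\huaR_0(x)\cdot\huaR_0(y)\Big)-\Big(\huaR_0\big(\huaR_0(y)\cdot x+y\cdot\huaR_0(x)\big)-\huaR_0(y)\cdot\huaR_0(x)\Big),$$
in which each bracket vanishes by the weight-$0$ Rota-Baxter identity (i) of Definition \ref{a Rota-Baxter operator} (the second obtained from the first by exchanging $x,y$). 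Condition (ii) mixes degrees and regroups in the same fashion into one bracket killed by Rota-Baxter identity (iii) and one killed by identity (ii). I anticipate no real obstacle here: since the weight is $0$ there are no $\lambda$-correction terms to absorb, and the only bookkeeping is keeping the outer $\huaR_0$ or $\huaR_1$ attached to the correct argument and tracking signs from the degree $-1$ entries. Applying Proposition \ref{pro:RB-Lie2} then produces a bona fide strict pre-Lie $2$-algebra, so the compatibilities $(a_1)$--$(a_3)$ of Definition \ref{defi:2-term pre} are automatic; alternatively they follow at once from the chain-map identity $\huaR_0\circ\dM=\dM\circ\huaR_1$ together with $(a_1)$--$(a_3)$ for the associative structure. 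A fully direct verification of $(b_1)$--$(b_3)$ is of course also possible --- it is the graded counterpart of the classical fact that a weight-$0$ Rota-Baxter operator on an associative algebra yields a pre-Lie algebra --- but it is longer and less illuminating.
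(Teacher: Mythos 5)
Your proposal is correct and follows essentially the same route as the paper's own proof: both pass to the commutator strict Lie $2$-algebra of $\huaA$, observe that a weight-$0$ Rota--Baxter operator on the associative $2$-algebra is a Rota--Baxter operator (an $\mathcal{O}$-operator with respect to the adjoint representation) on that Lie $2$-algebra, and then invoke Proposition \ref{pro:RB-Lie2}. The only difference is that you spell out the verifications (the commutator Lie $2$-algebra axioms and the two $\mathcal{O}$-operator identities, with the correct regrouping into weight-$0$ Rota--Baxter brackets) which the paper asserts without detail.
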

\begin{proof}
Let $(\huaR_0,\huaR_1)$ be a Rota-Baxter operator of weight $0$ on a strict associative $2$-algebra $\huaA$. Then  $(\huaR_0,\huaR_1)$ is a Rota-Baxter operator on the strict Lie $2$-algebra $(\huaA_0,\huaA_{-1},\dM,\frkl_2)$, where $\frkl_2:A_i\otimes A_j\longrightarrow A_{i+j}$ is given by
\begin{eqnarray}
 	\label{eq:R1}\frkl_2(x,y)&=&x\cdot y-y\cdot x,\\
 	\label{eq:R2} \frkl_2(x,a)&=&-\frkl_2(a,x)=x\cdot a-a\cdot x.
 \end{eqnarray}
 By Proposition \ref{pro:RB-Lie2}, $(A_0,A_{-1},\dM,\cdot_\huaR)$ is a strict pre-Lie  $2$-algebra.
\end{proof}

\begin{pro}
Let $(\huaR_0,\huaR_1)$ be a Rota-Baxter operator of weight $1$ on a strict associative $2$-algebra $\huaA$. 
Define a degree $0$ multiplication $\cdot_\huaR:A_i\otimes A_j\longrightarrow A_{i+j}$, $-1\leq i+j\leq 0$, on $\huaA$ by
    \begin{eqnarray}
      x\cdot_\huaR y&=&\huaR_0(x)\cdot y-y\cdot \huaR_0(x)-x\cdot y,\\
      x\cdot_\huaR a&=&\huaR_0(x)\cdot a-a\cdot \huaR_0(x)-x\cdot a,\\
      a\cdot_\huaR x&=&\huaR_1(a)\cdot x-x\cdot \huaR_1(a)-a\cdot x,\quad x,y\in A_0,a\in A_{-1}.
    \end{eqnarray}
    Then, $(A_0,A_{-1},\dM,\cdot_\huaR)$ is a strict pre-Lie  $2$-algebra.
\end{pro}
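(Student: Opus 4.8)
The plan is to verify directly the six defining identities $(a_1)$--$(a_3)$ and $(b_1)$--$(b_3)$ of a strict pre-Lie $2$-algebra (Definition \ref{defi:2-term pre} with $l_3=0$) for the operation $\cdot_\huaR$, exactly as in the preceding weight-$0$ proposition but now carrying the extra $-x\cdot y$ correction terms. The three $\dM$-compatibility conditions $(a_1)$--$(a_3)$ are the easy part. Expanding, for instance, $\dM(x\cdot_\huaR a)=\dM(\huaR_0(x)\cdot a)-\dM(a\cdot\huaR_0(x))-\dM(x\cdot a)$ and applying the associative-$2$-algebra compatibilities $(a_1)$--$(a_3)$ of Definition \ref{defi:2-term ass} collapses it to $x\cdot_\huaR\dM a$; for $(a_1)$ no Rota--Baxter relation is needed, while $(a_2)$ and $(a_3)$ require only that $(\huaR_0,\huaR_1)$ is a chain map, i.e. $\dM\circ\huaR_1=\huaR_0\circ\dM$, to move $\dM$ across $\huaR$.

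The substance lies in the left-symmetry identities $(b_1)$--$(b_3)$, and $(b_1)$ is representative. Here one expands the associator $(x_0\cdot_\huaR x_1)\cdot_\huaR x_2-x_0\cdot_\huaR(x_1\cdot_\huaR x_2)$ using $x\cdot_\huaR y=\huaR_0(x)\cdot y-y\cdot\huaR_0(x)-x\cdot y$. Associativity of $\cdot$ (the strict form of Definition \ref{defi:2-term ass}) cancels all the purely associative triple products, leaving only the terms in which two copies of $\huaR_0$ meet; these are resolved by the weight-$1$ relation (i) of Definition \ref{a Rota-Baxter operator}, namely $\huaR_0(x)\cdot\huaR_0(y)=\huaR_0\big(\huaR_0(x)\cdot y+x\cdot\huaR_0(y)-x\cdot y\big)$. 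The outcome is that the associator of $\cdot_\huaR$ is symmetric in $x_0,x_1$, which is precisely $(b_1)$. Conceptually this is the graded lift of the classical fact that a weight-$\lambda$ Rota--Baxter operator on an associative algebra yields a pre-Lie algebra via $x\cdot_\huaR y=\huaR(x)\cdot y-y\cdot\huaR(x)-\lambda\,x\cdot y$; the coefficient $\lambda=1$ in front of $x\cdot y$ is dictated exactly so that the correction terms match the defect measured by relation (i).

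The mixed identities $(b_2)$ and $(b_3)$, which each involve one element $a\in A_{-1}$, follow by the same mechanism in the appropriate degree: the pure associative products cancel by the strict associativity relations $(b_1)$--$(b_4)$ of Definition \ref{defi:2-term ass}, and the surviving double-$\huaR$ terms are killed by relation (ii) for $(b_2)$ (which pairs $\huaR_0$ against $\huaR_1$) and by relation (iii) for $(b_3)$ (which pairs $\huaR_1$ against $\huaR_0$). I expect the only genuine obstacle to be the bookkeeping in $(b_1)$: one must track all six terms of the two associators and confirm that the weight-$1$ corrections cancel \emph{exactly} against relation (i), not merely up to $\dM$. A cleaner but unavailable route would be to factor $\cdot_\huaR$ as the antisymmetrization $x\succ y-y\prec x$ of the splitting $x\succ y=\huaR_0(x)\cdot y-x\cdot y$, $x\prec y=x\cdot\huaR_0(y)$ induced by $(\huaR_0,\huaR_1)$, which satisfies the dendriform axioms by virtue of (i)--(iii); since dendriform $2$-algebras are not developed here, the direct verification above is the one I would carry out.
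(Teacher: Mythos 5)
Your strategy coincides with the paper's proof: a direct verification of $(a_1)$--$(a_3)$ and $(b_1)$--$(b_3)$ of Definition \ref{defi:2-term pre}, with the $(a)$-conditions following from the chain-map property of $(\huaR_0,\huaR_1)$ together with $(a_1)$--$(a_3)$ of Definition \ref{defi:2-term ass}, and with $(b_1)$ handled by expanding the four $\cdot_\huaR$-terms, cancelling via associativity, and resolving the terms where $\huaR_0$ hits a $\cdot_\huaR$-product by the weight-$1$ relation (i). Your account of $(a_1)$--$(a_3)$ and of the mechanism behind $(b_1)$ is accurate and matches what the paper writes out in full.

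However, your assignment of the Rota--Baxter axioms to the mixed identities is wrong, and the steps as you planned them would not go through. In $(b_2)$ the element $a\in A_{-1}$ occurs only as a right factor, so $\huaR_1$ never appears anywhere in the expansion; relation (ii) of Definition \ref{a Rota-Baxter operator} cannot even be invoked. The only terms where $\huaR$ is applied to a product are $\huaR_0(x_0\cdot_\huaR x_1)\cdot a - a\cdot\huaR_0(x_0\cdot_\huaR x_1)$ and their counterparts with $x_0,x_1$ exchanged, and these are resolved by relation (i) exactly as in $(b_1)$; the resulting double-$\huaR_0$ terms then cancel by associativity. Conversely, in $(b_3)$ the problematic terms are $\huaR_1(x_1\cdot_\huaR a)$ and $\huaR_1(a\cdot_\huaR x_1)$, and relation (iii) alone does not resolve them; one needs (ii) and (iii) simultaneously, via
\begin{align*}
\huaR_1(x_1\cdot_\huaR a)-\huaR_1(a\cdot_\huaR x_1)
&=\huaR_1\big(\huaR_0(x_1)\cdot a+x_1\cdot\huaR_1(a)-x_1\cdot a\big)-\huaR_1\big(\huaR_1(a)\cdot x_1+a\cdot\huaR_0(x_1)-a\cdot x_1\big)\\
&=\huaR_0(x_1)\cdot\huaR_1(a)-\huaR_1(a)\cdot\huaR_0(x_1).
\end{align*}
This is a repairable bookkeeping error rather than a flaw of method --- once you actually expand, the correct relations are forced on you, and the computation closes exactly as in the paper (which proves $(b_1)$ in full and treats $(b_2)$, $(b_3)$ as analogous) --- but as stated, your roadmap for $(b_2)$ and $(b_3)$ invokes axioms that either do not apply or do not suffice.
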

\begin{proof}
	Let $\huaA$ be a strict associative $2$-algebra. By Conditions \rm($a_1$)-\rm($a_3$) of Definition \ref{defi:2-term ass}, Conditions \rm($a_1$)-\rm($a_3$) of Definition \ref{defi:2-term pre} hold. 
	
	By Conditions $\rm(i)$ of Definition \ref{a Rota-Baxter operator} and $(b_1)$ of Definition \ref{defi:2-term ass}, we have
	\begin{eqnarray*}
		&&x_0\cdot_\huaR(x_1\cdot_\huaR x_2)-(x_0\cdot_\huaR x_1)\cdot_\huaR x_2-x_1\cdot_\huaR(x_0\cdot_\huaR x_2)+(x_1\cdot_\huaR x_0)\cdot_\huaR x_2\\
		&=&x_0\cdot_\huaR(\huaR_0(x_1)\cdot x_2-x_2\cdot \huaR_0(x_1)-x_1\cdot x_2)-(\huaR_0(x_0)\cdot x_1-x_1\cdot \huaR_0(x_0)-x_0\cdot x_1)\cdot_\huaR x_2\\
		&&-x_1\cdot_\huaR(\huaR_0(x_0)\cdot x_2-x_2\cdot \huaR_0(x_0)-x_0\cdot x_2)+(\huaR_0(x_1)\cdot x_0-x_0\cdot \huaR_0(x_1)-x_1\cdot x_0)\cdot_\huaR x_2\\
		&=&\huaR_0(x_0)\cdot(\huaR_0(x_1)\cdot x_2)-\huaR_0(x_0)\cdot(x_2\cdot \huaR_0(x_1))-\huaR_0(x_0)\cdot(x_1\cdot x_2)-(\huaR_0(x_1)\cdot x_2)\cdot \huaR_0(x_0)\\
		&&+(x_2\cdot \huaR_0(x_1))\cdot \huaR_0(x_0)+(x_1\cdot x_2)\cdot \huaR_0(x_0)-x_0\cdot(\huaR_0(x_1)\cdot x_2)+x_0\cdot(x_2\cdot \huaR_0(x_1))\\
		&&+x_0\cdot(x_1\cdot x_2)-\huaR_0(\huaR_0(x_0)\cdot x_1)\cdot x_2+\huaR_0(x_1\cdot \huaR_0(x_0))\cdot x_2+\huaR_0(x_0\cdot x_1)\cdot x_2\\
		&&+x_2\cdot\huaR_0(\huaR_0(x_0)\cdot x_1)-x_2\cdot\huaR_0(x_1\cdot \huaR_0(x_0))-x_2\cdot\huaR_0(x_0\cdot x_1)+(\huaR_0(x_0)\cdot x_1)\cdot x_2\\
		&&-(x_1\cdot \huaR_0(x_0))\cdot x_2-(x_0\cdot x_1)\cdot x_2-\huaR_0(x_1)\cdot(\huaR_0(x_0)\cdot x_2)+\huaR_0(x_1)\cdot(x_2\cdot \huaR_0(x_0))\\
		&&+\huaR_0(x_1)\cdot(x_0\cdot x_2)+(\huaR_0(x_0)\cdot x_2)\cdot \huaR_0(x_1)-(x_2\cdot \huaR_0(x_0))\cdot \huaR_0(x_1)-(x_0\cdot x_2)\cdot \huaR_0(x_1)\\
		&&+x_1\cdot(\huaR_0(x_0)\cdot x_2)-x_1\cdot(x_2\cdot \huaR_0(x_0))-x_1\cdot(x_0\cdot x_2)+\huaR_0(\huaR_0(x_1)\cdot x_0)\cdot x_2\\
		&&-\huaR_0(x_0\cdot \huaR_0(x_1))\cdot x_2-\huaR_0(x_1\cdot x_0)\cdot x_2-x_2\cdot\huaR_0(\huaR_0(x_1)\cdot x_0)+x_2\cdot\huaR_0(x_0\cdot \huaR_0(x_1))\\
		&&+x_2\cdot\huaR_0(x_1\cdot x_0)-(\huaR_0(x_1)\cdot x_0)\cdot x_2+(x_0\cdot \huaR_0(x_1))\cdot x_2+(x_1\cdot x_0)\cdot x_2\\
		&=&\huaR_0(x_0)\cdot(\huaR_0(x_1)\cdot x_2)+(x_2\cdot \huaR_0(x_1))\cdot \huaR_0(x_0)-\huaR_0(x_1)\cdot(\huaR_0(x_0)\cdot x_2)-(x_2\cdot \huaR_0(x_0))\cdot \huaR_0(x_1)\\
		&&-(\huaR_0(x_0)\cdot\huaR_0(x_1))\cdot x_2-x_2\cdot (\huaR_0(x_1)\cdot \huaR_0(x_0))+(\huaR_0(x_1)\cdot\huaR_0(x_0))\cdot x_2+x_2\cdot (\huaR_0(x_0)\cdot \huaR_0(x_1))\\
		&=&0.
	\end{eqnarray*}
	
	Similarly, we have 
		$$x_0\cdot_\huaR(x_1\cdot_\huaR a)-(x_0\cdot_\huaR x_1)\cdot_\huaR a-x_1\cdot_\huaR(x_0\cdot_\huaR a)+(x_1\cdot_\huaR x_0)\cdot_\huaR a=0,$$
		$$a\cdot_\huaR(x_1\cdot_\huaR x_2)-(a\cdot_\huaR x_1)\cdot_\huaR x_2-x_1\cdot_\huaR(a\cdot_\huaR x_2)+(x_1\cdot_\huaR a)\cdot_\huaR x_2=0.$$
	
	Therefore, $(A_0,A_{-1},\dM,\cdot_\huaR)$ is a strict pre-Lie  $2$-algebra.
\end{proof}

\begin{defi}
A strict associative $2$-algebra $(A_0, A_{-1}, \dM, \cdot)$ is called a {\bf strict commutative associative $2$-algebra} if for $x,y\in A_0$ and $a\in A_{-1}$, 
$$x\cdot y=y\cdot x,\quad x\cdot a=a\cdot x.$$
\end{defi}

\begin{defi}
Let $\huaA$ be a strict commutative associative $2$-algebra. A pair $(D_0,D_1)$, where $D_0:A_0\longrightarrow A_0,D_1:A_{-1}\longrightarrow A_{-1}$ is a chain map, is called a  {\bf derivation} on $\huaA$  if for all $x,y\in A_0$ and $a\in A_{-1}$, the following conditions are satisfied:
  \begin{itemize}
    \item[\rm(i)] $D_0(x)\cdot y+x\cdot D_0(y)=D_0(x\cdot y);$

    \item[\rm(ii)] $D_0(x)\cdot a+x\cdot D_1(a)=D_1(x\cdot a).$
    \end{itemize}
\end{defi}

\begin{pro}
Let $(D_0,D_1)$ be a derivation on a strict commutative associative $2$-algebra $\huaA$. 
Define a degree $0$ multiplication $\cdot_D:A_i\otimes A_j\longrightarrow A_{i+j}$, $-1\leq i+j\leq 0$, on $\huaA$ by
    \begin{eqnarray}
      x\cdot_D y&=&x\cdot D_0(y)+cx\cdot y,\\
      x\cdot_D a&=&x\cdot D_1(a)+cx\cdot a,\\
      a\cdot_D x&=&a\cdot D_0(x)+ca\cdot x,
    \end{eqnarray}
    where $ x,y\in A_0,a\in A_{-1}$ and $c$ is a fixed constant in $\K$. 
    Then, $(A_0,A_{-1},\dM,\cdot_D)$ is a strict pre-Lie  $2$-algebra.
\end{pro}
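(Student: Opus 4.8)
The plan is to verify directly the axioms of a strict pre-Lie $2$-algebra (Definition \ref{defi:2-term pre} with $l_3=0$) for $(A_0,A_{-1},\dM,\cdot_D)$, namely the three $\dM$-compatibility conditions $(a_1)$--$(a_3)$ and the three associator conditions $(b_1)$--$(b_3)$. Throughout I write $D$ for the operator acting as $D_0$ on $A_0$ and as $D_1$ on $A_{-1}$, so that the defining formulas read uniformly as $u\cdot_D v=u\cdot D(v)+c\,u\cdot v$ for homogeneous $u,v$ with $u\cdot v$ defined.

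First I would dispose of $(a_1)$--$(a_3)$. These follow from the chain-map identity $\dM\circ D_1=D_0\circ\dM$ together with the $\dM$-compatibilities $(a_1)$--$(a_3)$ of the associative $2$-algebra (Definition \ref{defi:2-term ass}). For instance, for $(a_1)$ one computes $\dM(x\cdot_D a)=\dM(x\cdot D_1(a))+c\,\dM(x\cdot a)=x\cdot\dM D_1(a)+c\,x\cdot\dM a=x\cdot D_0(\dM a)+c\,x\cdot\dM a=x\cdot_D(\dM a)$; conditions $(a_2)$ and $(a_3)$ are entirely analogous, using $\dM(a\cdot x)=(\dM a)\cdot x$ and $\dM a\cdot b=a\cdot\dM b$ respectively, plus the chain-map identity.

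The heart of the matter is $(b_1)$--$(b_3)$. The key observation is that each of these three identities (with $l_3=0$) asserts exactly that the associator of $\cdot_D$, namely $(u,v,w):=(u\cdot_D v)\cdot_D w-u\cdot_D(v\cdot_D w)$, is symmetric under interchanging its first two arguments. So I would prove a single master formula valid for all homogeneous $u,v,w$ of total degree $\geq-1$:
\[
(u\cdot_D v)\cdot_D w-u\cdot_D(v\cdot_D w)=-\,u\cdot v\cdot\bigl(D^2(w)+c\,D(w)\bigr).
\]
Expanding both sides via $u\cdot_D v=u\cdot D(v)+c\,u\cdot v$, applying the Leibniz rule $D(p\cdot q)=D(p)\cdot q+p\cdot D(q)$ (which is precisely the two defining conditions of the derivation $(D_0,D_1)$, extended by commutativity to the slot orderings that arise) and simplifying with associativity, one finds that the $u\cdot D(v)\cdot D(w)$, the $c\,u\cdot D(v)\cdot w$ and the $c^2\,u\cdot v\cdot w$ terms cancel, leaving the stated right-hand side. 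Since the right-hand side depends on $u,v$ only through the commutative product $u\cdot v=v\cdot u$, it is symmetric in $u,v$; hence $(u,v,w)=(v,u,w)$, which is exactly $(b_1)$ when $u,v,w\in A_0$, $(b_2)$ when $w\in A_{-1}$, and $(b_3)$ when one of the first two slots lies in $A_{-1}$.

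I expect the only real care to be in the degree bookkeeping for the master formula: one must check that every intermediate and final product occurring in the expansion lands in an allowed degree (each relevant triple has at most one factor in $A_{-1}$, so no ill-defined $A_{-1}\cdot A_{-1}$ product ever appears), and that the Leibniz rule is applied in the correct degree, using $D_0$ or $D_1$ as appropriate and the commutativity $x\cdot a=a\cdot x$ to reduce condition (ii) to the orderings actually needed. A secondary point worth flagging is that although $D+c\,\Id$ is not a derivation, the extra $c$-terms it produces are themselves symmetric in $u,v$, so the shift by $c$ does not break the required symmetry; this becomes transparent once the master formula is in hand.
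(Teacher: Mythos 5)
Your proposal is correct, and it verifies the same list of axioms as the paper, but it organizes the key step differently. The paper's proof handles conditions $(b_1)$--$(b_3)$ by brute force: for each one it expands the full four-term left-symmetry expression (e.g.\ $x_0\cdot_D(x_1\cdot_D x_2)-(x_0\cdot_D x_1)\cdot_D x_2-x_1\cdot_D(x_0\cdot_D x_2)+(x_1\cdot_D x_0)\cdot_D x_2$) into sixteen terms and cancels them pairwise using commutativity and the associativity conditions $(b_1)$--$(b_4)$ of Definition \ref{defi:2-term ass}, doing this explicitly for $(b_1)$ and $(b_2)$ and declaring $(b_3)$ similar. You instead prove a single closed-form identity for the plain associator,
\[
(u\cdot_D v)\cdot_D w-u\cdot_D(v\cdot_D w)=-(u\cdot v)\cdot\bigl(D^2(w)+c\,D(w)\bigr),
\]
and deduce all three conditions at once from the symmetry of the right-hand side in $u,v$, which is immediate from commutativity. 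I checked the computation: after expanding with the Leibniz rule (extended by commutativity to the mixed-degree orderings, exactly as you flag) the four associativity cancellations leave precisely $-u\cdot(v\cdot D^2(w))-c\,u\cdot(v\cdot D(w))$, so the master formula holds, and your degree bookkeeping is sound since each relevant triple contains at most one factor from $A_{-1}$. What your route buys is economy and transparency — one computation instead of three, and it isolates the structural reason the construction works (the associator of $\cdot_D$ depends on its first two arguments only through their commutative product, the classical mechanism behind Novikov-type constructions); what the paper's route buys is that it needs no auxiliary identity, only repeated term-by-term cancellation. The treatment of $(a_1)$--$(a_3)$ via the chain-map property is the same in both.
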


\begin{proof}
	Let $\huaA$ be a strict commutative associative $2$-algebra. By Conditions \rm($a_1$)-\rm($a_3$) of Definition \ref{defi:2-term ass}, Conditions \rm($a_1$)-\rm($a_3$) of Definition \ref{defi:2-term pre} follow. Since $\huaA$ is commutative and Condition $(b_1)$ of Definition \ref{defi:2-term ass} holds, we have
	\begin{eqnarray*}
	&&x_0\cdot_D(x_1\cdot_D x_2)-(x_0\cdot_D x_1)\cdot_D x_2-x_1\cdot_D(x_0\cdot_D x_2)+(x_1\cdot_D x_0)\cdot_D x_2\\
	&=&x_0\cdot D_0(x_1 \cdot D_0(x_2))+cx_0\cdot(x_1 \cdot D_0(x_2))+x_0\cdot D_0(cx_1\cdot x_2)+cx_0\cdot(cx_1\cdot x_2)\\
	&&-(x_0 \cdot D_0(x_1))\cdot D_0(x_2)-c(x_0 \cdot D_0(x_1))\cdot x_2-(cx_0\cdot x_1)\cdot D_0(x_2)-c(cx_0\cdot x_1)\cdot x_2\\
	&&-x_1\cdot D_0(x_0 \cdot D_0(x_2))-cx_1\cdot (x_0 \cdot D_0(x_2))-x_1\cdot D_0(cx_0\cdot x_2)-cx_1\cdot (cx_0\cdot x_2)\\
	&&+(x_1 \cdot D_0(x_0))\cdot D_0(x_2)+c(x_1 \cdot D_0(x_0))\cdot x_2+(cx_1\cdot x_0)\cdot D_0(x_2)+c(cx_1\cdot x_0)\cdot x_2=0.
	\end{eqnarray*}

Since $\huaA$ is commutative and Condition $(b_2)$ of Definition \ref{defi:2-term ass} holds, we have
	\begin{eqnarray*}
		&&x_0\cdot_D(x_1\cdot_D a)-(x_0\cdot_D x_1)\cdot_D a-x_1\cdot_D(x_0\cdot_D a)+(x_1\cdot_D x_0)\cdot_D a\\
		&=&x_0\cdot D_1(x_1\cdot D_1(a))+x_0\cdot D_1(cx_1\cdot a)+cx_0\cdot (x_1\cdot D_1(a))+cx_0\cdot(cx_1\cdot a)\\
		&&-(x_0\cdot D_0(x_1))\cdot D_1(a)-(cx_0\cdot x_1)\cdot D_1(a)+c(x_0\cdot D_0(x_1))\cdot a+c(cx_0\cdot x_1)\cdot a\\
		&&-x_1\cdot D_1(x_0\cdot D_1(a))-x_1\cdot D_1(cx_0 \cdot a)-cx_1\cdot (x_0\cdot D_1(a))-cx_1\cdot (cx_0 \cdot a)\\
		&&+(x_1\cdot D_0(x_0))\cdot D_1(a)+(cx_1 \cdot x_0)\cdot D_1(a)+c(x_1\cdot D_0(x_0))\cdot a+c(cx_1 \cdot x_0)\cdot a=0.
	\end{eqnarray*}

	Similarly, since $\huaA$ is commutative and Conditions $(b_3)$ and $(b_4)$ of Definition \ref{defi:2-term ass} hold, we can get 
    $$a\cdot_D(x_1\cdot_D x_2)-(a\cdot_D x_1)\cdot_D x_2-x_1\cdot_D(a\cdot_D x_2)+(x_1\cdot_D a)\cdot_D x_2=0.$$
	
	Therefore, $(A_0,A_{-1},\dM,\cdot_D)$ is a strict pre-Lie $2$-algebra.
\end{proof}

\subsection{Representations of strict pre-Lie 2-algebras}
 
 \begin{defi}\label{Representations of strict pre-Lie 2-algebras}
 Let $\huaA=(A_0, A_{-1}, \dM, \cdot)$ be a strict pre-Lie $2$-algebra, $\huaV:V_{-1} \stackrel{\partial}{\longrightarrow} V_0$ be a 2-term complex of vector spaces. A {\bf strict representation} of $\huaA$ on $\huaV$ consists of $\rho=(\rho_0, \rho_1)$ and $\mu=(\mu_0, \mu_1)$, where $\rho=(\rho_0, \rho_1)$ is a strict representation of the strict Lie $2$-algebra $\huaG(\huaA)$ on $\huaV$ and $\mu=(\mu_0, \mu_1)$ is a chain map from $2$-term complex of vector spaces $\huaA$ to $\End(\huaV)$, such that for all $x, y, z \in A_0, a \in A_{-1}$, the following equalities are satisfied:
 \begin{eqnarray}
\label{eq:rep1}\mu_0(x\cdot y)+\mu_0(y)\circ\rho_0(x)-\rho_0(x)\circ\mu_0(y)-\mu_0(y)\circ\mu_0(x)&=&0,\\
\label{eq:rep2}\mu_1(x\cdot a)-\rho_0(x)\circ\mu_1(a)+\mu_1(a)\circ\rho_0(x)-\mu_1(a)\circ\mu_0(x)&=&0,\\
\label{eq:rep3}\mu_1(a\cdot x)-\rho_1(a)\circ\mu_0(x)+\mu_0(x)\circ\rho_1(a)-\mu_0(x)\circ\mu_1(a)&=&0.
 \end{eqnarray}
 \end{defi}

\begin{pro}\label{pro:rep-pre2-Lie2}
Let $\huaA=(A_0, A_{-1}, \dM, \cdot)$ be a strict pre-Lie $2$-algebra and  $(\huaV;(\rho_0,\rho_1),(\mu_0,\mu_1))$ be its strict representation. Then $(\huaV;\rho_0-\mu_0,\rho_1-\mu_1)$ is a strict representation of the strict Lie $2$-algebra $\huaG(\huaA)$.
\end{pro}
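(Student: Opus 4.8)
The plan is to verify directly that the pair $(\sigma_0,\sigma_1):=(\rho_0-\mu_0,\rho_1-\mu_1)$ is a strict homomorphism from the strict Lie $2$-algebra $\huaG(\huaA)=(A_0,A_{-1},\dM,\frkl_2)$ to $\End(\huaV)$ in the sense of Definition \ref{strict homomorphism}, where the target carries the differential $\delta(\phi)=\phi\circ\partial+\partial\circ\phi$ and the natural commutator bracket $[\cdot,\cdot]_C$. By definition this is exactly what it means for $(\huaV;\sigma_0,\sigma_1)$ to be a strict representation of $\huaG(\huaA)$. Three inputs are used throughout: that $(\rho_0,\rho_1)$ is already a strict homomorphism (hence satisfies conditions (i)--(iii) of Definition \ref{strict homomorphism} relative to the brackets $\frkl_2$ of \eqref{eq:l21}--\eqref{eq:l22}); that $(\mu_0,\mu_1)$ is a chain map, i.e. $\mu_0\circ\dM=\delta\circ\mu_1$; and the three compatibility identities \eqref{eq:rep1}--\eqref{eq:rep3}.

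First I would dispatch condition (i), compatibility with the differentials, which is immediate by linearity:
\[
\sigma_0\circ\dM=\rho_0\circ\dM-\mu_0\circ\dM=\delta\circ\rho_1-\delta\circ\mu_1=\delta\circ\sigma_1,
\]
the middle equality being the homomorphism property of $\rho$ together with the chain-map property of $\mu$.

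Next, for condition (ii) I would expand $\sigma_0(\frkl_2(x,y))$ using $\frkl_2(x,y)=x\cdot y-y\cdot x$ and compare it with the commutator $[\sigma_0(x),\sigma_0(y)]$ in $\End_\partial^0(\huaV)$. The purely-$\rho$ contribution $\rho_0(x\cdot y-y\cdot x)$ matches $[\rho_0(x),\rho_0(y)]$ because $\rho$ already satisfies condition (ii) of Definition \ref{strict homomorphism}; after this cancellation the remaining identity involves only $\mu_0(x\cdot y)$, $\mu_0(y\cdot x)$ and the cross terms $\rho_0\mu_0$, $\mu_0\rho_0$, $\mu_0\mu_0$, and it is precisely \eqref{eq:rep1} written once for $(x,y)$ and once for $(y,x)$ and subtracted. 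Condition (iii) is entirely analogous: expanding $\sigma_1(\frkl_2(x,a))$ with $\frkl_2(x,a)=x\cdot a-a\cdot x$, the $\rho$-only part cancels by condition (iii) of Definition \ref{strict homomorphism} for $\rho$, and the residual identity is the difference of \eqref{eq:rep2} applied to $x\cdot a$ and \eqref{eq:rep3} applied to $a\cdot x$.

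The only delicate point is bookkeeping rather than a genuine obstacle: in the mixed-degree computation for condition (iii) one must interpret $[\cdot,\cdot]_C$ as the graded commutator $[\sigma_0(x),\sigma_1(a)]_C=\sigma_0(x)\circ\sigma_1(a)-\sigma_1(a)\circ\sigma_0(x)$ (the Koszul sign is trivial since $\sigma_0(x)$ has degree $0$), and keep the six resulting cross terms in an order that lines up with those produced by \eqref{eq:rep2} and \eqref{eq:rep3}. Once all terms are matched, no further Lie $2$-algebra axiom is needed, so verifying (i)--(iii) completes the proof.
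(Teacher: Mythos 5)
Your proposal is correct and follows essentially the same route as the paper's proof: verify the differential condition from the chain-map property of $\mu$ and condition (i) for $\rho$, then obtain conditions (ii) and (iii) by antisymmetrizing \eqref{eq:rep1} (respectively combining \eqref{eq:rep2} and \eqref{eq:rep3}) and cancelling the pure-$\rho$ terms via the homomorphism property of $\rho$. The bookkeeping identities you describe are exactly the intermediate equations displayed in the paper, so no further comparison is needed.
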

\begin{proof}
	Let $\huaA=(A_0, A_{-1}, \dM, \cdot)$ be a strict pre-Lie $2$-algebra and $(\huaV;(\rho_0,\rho_1),(\mu_0,\mu_1))$ be its strict representation. Then, $\rho=(\rho_0, \rho_1)$ is a strict representation of the strict Lie $2$-algebra $\huaG(\huaA)$ on $\huaV$ and $\mu=(\mu_0, \mu_1)$ is a chain map from $2$-term complex of vector spaces $\huaA$ to $\End(\huaV)$. 
	
	Through the definition of the strict representation of the strict Lie $2$-algebra $\huaG(\huaA)$, we have 
	\begin{itemize}
		\item[\rm(i)] $\rho_0\circ \dM=\delta\circ \rho_1,$
		\item[\rm(ii)] $\rho_0\mathfrak{l}_2(x, y)-[\rho_0(x), \rho_0(y)]=0,$
		\item[\rm(iii)] $\rho_1\mathfrak{l}_2(x, h)-[\rho_0(x), \rho_1(h)]=0, \quad x,y\in A_0, h\in A_1.$
	\end{itemize}
	
	Because $\mu=(\mu_0, \mu_1)$ is a chain map from $2$-term complex of vector spaces $\huaA$ to $\End(\huaV)$, we know $\mu_0\circ \dM(h)=\delta\circ\mu_1(h).$ By \rm(i), $$(\rho_0-\mu_0)\circ \dM=\delta\circ (\rho_1-\mu_1)$$ holds.
	
	From \eqref{eq:rep1}, we can get $\mu_0(x\cdot y-y\cdot x)-[\mu_0(x), \rho_0(y)]-[\rho_0(x), \mu_0(y)]+[\mu_0(x), \mu_0(y)]=0.$
	By \rm(ii), $$(\rho_0-\mu_0)\mathfrak{l}_2(x, y)-[(\rho_0-\mu_0)(x), (\rho_0-\mu_0)(y)]=0$$ holds.
	
	From \eqref{eq:rep2} and \eqref{eq:rep3}, we can get $\mu_1(x\cdot h-h\cdot x)-[\rho_0(x), \mu_1(h)]-[\mu_1(h), \mu_0(x)]+[\rho_1(h), \mu_0(x)]=0.$
	By \rm(iii), $$(\rho_1-\mu_1)\mathfrak{l}_2(x, h)-[(\rho_0-\mu_0)(x), (\rho_1-\mu_1)(h)]=0$$ holds.
	
	Then, $(\huaV;\rho_0-\mu_0,\rho_1-\mu_1)$ is a strict representation of the strict Lie $2$-algebra $\huaG(\huaA)$.
\end{proof}

 \begin{pro}
 Let $(\huaV;\rho,\mu)$ be a strict representation of the strict pre-Lie $2$-algebra $\huaA=(A_0, A_{-1}, \dM, \cdot)$.   Considering $2$-term complex $\huaA\oplus\huaV:A_{-1}\oplus V_{-1}\stackrel{\dM+\partial }{\longrightarrow}A_0\oplus V_0$, and for all $x, y \in A_0$, $a, b \in A_{-1}$, $u, v \in V_0$, $m, n \in V_{-1}$, define bilinear maps $\ast: (A_0\oplus V_0) \oplus (A_0\oplus V_0) \longrightarrow A_{0}\oplus V_{0}$, $\ast: (A_0\oplus V_0) \oplus (A_{-1}\oplus V_{-1}) \longrightarrow A_{1}\oplus V_{1}$ and $\ast: (A_{-1}\oplus V_{-1}) \oplus (A_0\oplus V_0) \longrightarrow A_{1}\oplus V_{1}$ by
 \begin{eqnarray}
(x+u)\ast(y+v)&=&x\cdot y+\rho_0(x)v+\mu_0(y)u,\\
(x+u)\ast (a+m)&=&x\cdot a+\rho_0(x)m+\mu_1(a)u,\\
(a+m)\ast (y+v)&=&a\cdot y+\rho_1(a)v+\mu_0(y)m.
 \end{eqnarray}
 Then $(A_0\oplus V_0, A_{-1}\oplus V_{-1}, \dM+\partial, \ast)$ is a strict pre-Lie $2$-algebra, which is called a {\bf semidirect product strict pre-Lie $2$-algebra}  and denote it by $\huaA\ltimes_{(\rho, \mu)} \huaV$.
 \end{pro}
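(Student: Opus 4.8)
The plan is to verify directly that the operation $\ast$ together with the differential $\dM+\partial$ satisfies the defining axioms of a strict pre-Lie $2$-algebra, namely $(a_1)$--$(a_3)$ and $(b_1)$--$(b_3)$ of Definition \ref{defi:2-term pre}; since we take $l_3=0$ on the product, axiom (c) reads $0=0$ and needs no checking. The organizing remark is that both $\ast$ and $\dM+\partial$ are block maps for the decomposition $A_\bullet\oplus V_\bullet$: the $A$-valued part of $\ast$ is precisely the multiplication $\cdot$ of $\huaA$ and the $A$-valued part of $\dM+\partial$ is $\dM$, whereas the $V$-valued parts are assembled from $\rho_0,\rho_1,\mu_0,\mu_1$ and $\partial$. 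Hence each axiom splits into an $A$-component and a $V$-component, and every $A$-component is literally the same axiom for $\huaA$, which holds because $\huaA$ is a strict pre-Lie $2$-algebra. All the work therefore reduces to showing that the $V$-components vanish.

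I would first treat the differential-compatibility axioms $(a_1)$--$(a_3)$. Expanding, for instance, $(\dM+\partial)\big((x+u)\ast(a+m)\big)$ against $(x+u)\ast(\dM+\partial)(a+m)$ and comparing $V$-parts, the asserted identities amount to commuting $\partial$ through the operators $\rho_i$ and $\mu_i$. This is exactly what the hypotheses provide: $\rho=(\rho_0,\rho_1)$ is a strict representation of $\huaG(\huaA)$, so $\rho_0(x)\in\End^0_\partial(\huaV)$ and $\rho_0(\dM a)=\partial\circ\rho_1(a)+\rho_1(a)\circ\partial$; and $\mu=(\mu_0,\mu_1)$ is a chain map into $\End(\huaV)$, so $\mu_0(\dM a)=\partial\circ\mu_1(a)+\mu_1(a)\circ\partial$. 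Reading these equalities componentwise on $V_0$ and $V_{-1}$ makes every $V$-part of $(a_1)$--$(a_3)$ vanish identically.

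The real content sits in the pre-Lie axioms $(b_1)$--$(b_3)$. For each I would substitute general elements of the appropriate degrees, expand the four-term expression using the three formulas for $\ast$, and collect the $V$-valued coefficient of each input variable. In every case the coefficient of the variable in the last slot reduces to a representation identity for $\rho$: condition (ii) of the strict representation, $\rho_0(\frkl_2(x,y))=[\rho_0(x),\rho_0(y)]$, for $(b_1)$ and $(b_2)$, and condition (iii), $\rho_1(\frkl_2(x,a))=[\rho_0(x),\rho_1(a)]$, for $(b_3)$. The coefficients of the remaining two variables reduce, up to an overall sign and the exchange $x\leftrightarrow y$, to the compatibility relations of Definition \ref{Representations of strict pre-Lie 2-algebras}: axiom $(b_1)$ invokes \eqref{eq:rep1}, axiom $(b_2)$ invokes \eqref{eq:rep2}, and axiom $(b_3)$ invokes \eqref{eq:rep1} together with \eqref{eq:rep3}. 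Each collected coefficient coincides with one of these relations and therefore vanishes.

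The only genuine obstacle is bookkeeping rather than any conceptual difficulty. One must track, slot by slot, which component ($\rho_0$ versus $\rho_1$, $\mu_0$ versus $\mu_1$) acts according to the degree of the element being acted on, and keep the order of composition and the signs intact while unravelling the nested products. Once the expansion is sorted by input variable, the fact that \eqref{eq:rep1}--\eqref{eq:rep3} are tailored precisely to annihilate the $V$-coefficients produced here --- exactly mirroring the classical semidirect product of Proposition \ref{pro:semi-product} --- makes the verification routine.
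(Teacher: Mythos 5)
Your proposal is correct and follows essentially the same route as the paper: a direct verification in which the $(a_1)$--$(a_3)$ axioms reduce, componentwise on $V_0$ and $V_{-1}$, to the chain-map conditions for $\rho$ and $\mu$, and the $(b_1)$--$(b_3)$ axioms split into an $A$-part (the axioms of $\huaA$) plus $V$-coefficients killed by conditions (ii)--(iii) of the strict Lie $2$-algebra representation together with \eqref{eq:rep1}--\eqref{eq:rep3}. Your attribution of which identity annihilates which coefficient matches the paper's explicit expansion of $(b_1)$ (and its ``similarly'' for $(b_2)$, $(b_3)$) exactly.
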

 \begin{proof}
 	For all $x, y, z \in A_0$, $a, b \in A_{-1}$, $u, v, w \in V_0$, $m, n \in V_{-1}$. By the fact that  $\rho=(\rho_0, \rho_1)$ and $\mu=(\mu_0, \mu_1)$ are cochain maps, we have
 	\begin{eqnarray*}
 	(\dM+\partial)((x+u)\ast(a+m))&=& d(x\cdot a)+\partial(\rho_0(x)m)+\partial(\mu_1(a)u))\\
 	&=& x \cdot d(a)+\rho_0(x)(\partial(m))+\mu_0(d(a))(u)\\
 	&=& (x+u)\ast (d(a)+\partial(m))\\
 	&=& (x+u)\ast (\dM+\partial)(a+m),
 	\end{eqnarray*}
 which implies that 
 $$	(\dM+\partial)((x+u)\ast(a+m))= (x+u)\ast (\dM+\partial)(a+m).$$
 
 Similarly, we have 
 \begin{eqnarray*}
 	(\dM+\partial)((a+m)\ast(x+u))&=&((\dM+\partial)(a+m))\ast(x+u),\\ ((\dM+\partial)(a+m))\ast(b+n)&=&(a+m)\ast((\dM+\partial)(b+n)).
 	\end{eqnarray*}
 
 	By the fact that $\rho=(\rho_0, \rho_1)$ is a strict representation of a strict Lie $2$-algebra $\huaG(\huaA)$ and \eqref{eq:rep1}, we have
 	\begin{eqnarray*}
 	&&(x+u)\ast((y+v)\ast(z+w))-((x+u)\ast(y+v))\ast(z+w)\\
 	&&-(y+v)\ast((x+u)\ast(z+w))+((y+v)\ast(x+u))\ast(z+w)\\
 	&=&x\cdot(y\cdot z)-(x\cdot y)\cdot z-y\cdot(x\cdot z)+(y\cdot x)\cdot z\\
 	&&+\rho_0(x)\rho_0(y)w-\rho_0(x\cdot y)w-\rho_0(y)\rho_0(x)w+\rho(y\cdot x)w\\
 	&&+\rho_0(x)\mu_0(z)v-\mu_0(z)\rho_0(x)v-\mu_0(x\cdot z)v+\mu_0(z)\mu_0(x)v\\
 	&&+\mu_0(y\cdot z)u-\mu_0(z)\mu_0(y)u-\rho_0(y)\mu_0(z)u+\mu_0(z)\rho_0(y)u\\
 	&=&0.
 	\end{eqnarray*}
 	Similarly, we have
 	\begin{eqnarray*}
 	&&(x+u)\ast((y+v)\ast(a+m))-((x+u)\ast(y+v))\ast(a+m)\\
 	&&-(y+v)\ast((x+u)\ast(a+m))+((y+v)\ast(x+u))\ast(a+m)=0,\\
 	&&(a+m)\ast((y+v)\ast(z+w))-((a+m)\ast(y+v))\ast(z+w)\\
 	&&-(y+v)\ast((a+m)\ast(z+w))+((y+v)\ast(a+m))\ast(z+w)=0.
 	\end{eqnarray*}
 	Therefore, $(A_0\oplus V_0, A_{-1}\oplus V_{-1}, \dM+\partial, \ast)$ is a strict pre-Lie $2$-algebra.
 \end{proof}

 \begin{pro}
 Let $\huaA=(A_0, A_{-1}, \dM, \cdot)$ be a strict pre-Lie $2$-algebra and $(\huaV;(\rho_0,\rho_1),(\mu_0,\mu_1))$ be its strict representation. Then $(\huaV^*; (\rho_0^*-\mu_0^*, \rho_1^*-\mu_1^*), (-\mu_0^*, -\mu_1^*))$ is also a strict representation of $\huaA$.
 \end{pro}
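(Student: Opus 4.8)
The plan is to verify the three defining conditions of Definition \ref{Representations of strict pre-Lie 2-algebras} for the pair $\bar\rho=(\rho_0^*-\mu_0^*,\rho_1^*-\mu_1^*)$ and $\bar\mu=(-\mu_0^*,-\mu_1^*)$ acting on $\huaV^*$, the guiding principle being that each identity for the dual data is the adjoint of the corresponding identity for $(\rho,\mu)$, adjunction being a linear, order-reversing operation. For the first condition, namely that $\bar\rho$ is a strict representation of the sub-adjacent strict Lie $2$-algebra $\huaG(\huaA)$ on $\huaV^*$, I would invoke Proposition \ref{pro:rep-pre2-Lie2}: the pair $(\rho_0-\mu_0,\rho_1-\mu_1)$ is a strict representation of $\huaG(\huaA)$ on $\huaV$, so by the dual representation construction for strict Lie $2$-algebras recalled above its dual $\big((\rho_0-\mu_0)^*,(\rho_1-\mu_1)^*\big)$ is a strict representation of $\huaG(\huaA)$ on $\huaV^*$. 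Since $A\mapsto A^*$ is linear, this dual is exactly $(\rho_0^*-\mu_0^*,\rho_1^*-\mu_1^*)=\bar\rho$.

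For the second condition, that $\bar\mu$ is a chain map $\huaA\to\End(\huaV^*)$, I would dualize the corresponding data for $\mu$. Because $\mu_0(x)$ commutes with $\partial$ and $\mu_1(a)\in\End^{-1}(\huaV)$, their adjoints commute with $\partial^*$ and lie in $\End^{-1}(\huaV^*)$, so $-\mu_0^*$ and $-\mu_1^*$ are valued in $\End^0_{\partial^*}(\huaV^*)$ and $\End^{-1}(\huaV^*)$ respectively. Dualizing the identity $\mu_0\circ\dM=\delta\circ\mu_1$, with $\delta(\phi)=\phi\circ\partial+\partial\circ\phi$, gives $(-\mu_0^*)\circ\dM=\delta^*\circ(-\mu_1^*)$, which is precisely the chain-map condition for $\bar\mu$.

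The substance is the third condition, the compatibility identities \eqref{eq:rep1}, \eqref{eq:rep2}, \eqref{eq:rep3} for $(\bar\rho,\bar\mu)$. The plan is to substitute $\bar\rho_i=\rho_i^*-\mu_i^*$ and $\bar\mu_i=-\mu_i^*$ into each left-hand side, expand, and pair the resulting operator against an arbitrary vector $v$; using $\langle A^*\xi,v\rangle=-\langle\xi,Av\rangle$ each term collapses, and the whole expression becomes the adjoint of the corresponding original identity among \eqref{eq:rep1}--\eqref{eq:rep3}, hence vanishes. One point demands care: since $\rho_1$ and $\mu_1$ are degree $-1$ maps, dualizing their compositions with the degree $0$ maps $\rho_0,\mu_0$ reverses the operator order and interacts with the sign in $A\mapsto A^*$, so the sign bookkeeping is where the cancellations must be checked. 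One should also confirm the correspondence is diagonal, i.e. that the dual form of \eqref{eq:rep2} is the adjoint of \eqref{eq:rep2} and the dual form of \eqref{eq:rep3} is the adjoint of \eqref{eq:rep3} (both having inputs $x\in A_0$, $a\in A_{-1}$ and target $V_{-1}^*\to V_0^*$, so they are a priori easy to confuse). This is the main obstacle, but it is pure bookkeeping: no idea beyond systematic adjunction is required.
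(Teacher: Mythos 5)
Your proposal is correct and follows essentially the same route as the paper's proof: invoke Proposition \ref{pro:rep-pre2-Lie2} together with the dual-representation construction and linearity of dualization to handle $(\rho_0^*-\mu_0^*,\rho_1^*-\mu_1^*)$, dualize the chain-map identity $\mu_0\circ\dM=\delta\circ\mu_1$ to handle $(-\mu_0^*,-\mu_1^*)$, and then verify \eqref{eq:rep1}--\eqref{eq:rep3} for the dual data by pairing against arbitrary vectors and reducing each to the corresponding original identity (the correspondence is indeed diagonal, exactly as the paper's computations confirm).
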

 \begin{proof}
By Proposition \ref{pro:rep-pre2-Lie2}, $(\huaV;\rho_0-\mu_0,\rho_1-\mu_1)$ is a strict representation of the strict Lie $2$-algebra $\huaG(\huaA)$. Thus  $(\huaV^*;\rho_0^*-\mu_0^*, \rho_1^*-\mu_1^*)$ is a strict representation of $\huaG(A)$.

For any $a \in A_{-1}$, $n\in V_{-1}$, $m^*\in V^*_1$, we have
 \begin{eqnarray*}
     \left\langle (-\mu_0^*)(\dM a)m^*, n\right\rangle&=&\left\langle m^*, \mu_0(\dM a)n\right\rangle= \left\langle m^*, \partial\mu_1( a)n\right\rangle
=\left\langle m^*, \partial\mu_1( a)n\right\rangle\\
     &=&\left\langle(-\mu^*_1( a)) \partial^*(m^*),n\right\rangle
     =\left\langle(\delta^*(-\mu^*_1( a)))(m^*),n\right\rangle,     \end{eqnarray*}
     which implies that 
     $$(-\mu_0^*)(\dM a)m^*=\delta^*(-\mu^*_1( a)))(m^*).$$
 Similar, for  $v^*\in V^*_0$, we have
 $$(-\mu_0^*)(\dM a)v^*=\delta^*(-\mu^*_1( a)))(v^*).$$
Thus, $(-\mu_0^*, -\mu_1^*)$ is a chain map from $\huaA$ to $\End(\huaV^*)$. 

 For  $x,y\in A_0,v\in V_0,u^*\in V^*_0$,  by \eqref{eq:rep1}, we have
\begin{eqnarray*}
&&\left\langle -\mu_0^*(x\cdot y)u^*+(-\mu_0^*(y))(\rho_0^*-\mu_0^*)(x)u^*-(\rho_0^*-\mu_0^*)(x)(-\mu_0^*)(y)u^*-(-\mu_0^*(y))(-\mu_0^*(x))u^*, v\right\rangle\\
&=&\left\langle u^*, \mu_0(x\cdot y)v-\rho_0(x)\mu_0(y)v+\mu_0(y)\rho_0(x)-\mu_0(y)\mu_0(x)v\right\rangle=0,
\end{eqnarray*}
which implies that 
$$-\mu_0^*(x\cdot y)+(-\mu_0^*(y))(\rho_0^*-\mu_0^*)(x)-(\rho_0^*-\mu_0^*)(x)(-\mu_0^*)(y)-(-\mu_0^*(y))(-\mu_0^*(x))=0.$$

 For  $x\in A_0,a\in A_{-1}, u\in V_0,m^*\in V^*_1$,  by \eqref{eq:rep2}, we have
\begin{eqnarray*}
&&\left\langle -\mu_1^*(x\cdot a)m^*-(\rho_0^*-\mu_0^*)(x)(-\mu_1^*(a))m^*+(-\mu_1^*(a))\circ(\rho_0^*-\mu_0^*)(x)m^*-(-\mu_1^*(a))(-\mu_0^*(x))m^*, u\right\rangle\\
&=&\left\langle m^*, \mu_1(x\cdot a)u+\mu_1(a)\rho_0(x)u-\mu_1(a)\mu_0(x)u-\rho_0(x)\mu_1(a)u\right\rangle=0,
\end{eqnarray*}
which implies that 
$$-\mu_1^*(x\cdot a)-(\rho_0^*-\mu_0^*)(x)(-\mu_1^*(a))+(-\mu_1^*(a))\circ(\rho_0^*-\mu_0^*)(x)-(-\mu_1^*(a))(-\mu_0^*(x))=0.$$

 For  $x\in A_0,a\in A_{-1}, u\in V_0,m^*\in V^*_1$,  by \eqref{eq:rep3}, we have
\begin{eqnarray*}
&&\left\langle -\mu_1^*(a\cdot x)m^*-(\rho_1^*-\mu_1^*)(a)\circ(-\mu_0^*)(x)m^*+(-\mu_0^*(x))\circ(\rho_1^*-\mu_1^*)(a)m^*-(-\mu_0^*(x))(-\mu_1^*(a))m^*, u\right\rangle\\
&=&\left\langle m^*, \mu_1(a\cdot x)u+\mu_0(x)\rho_1(a)u-\mu_0(x)\mu_1(a)u-\rho_1(a)\mu_0(x)u\right\rangle=0,
\end{eqnarray*}
which implies that 
$$-\mu_1^*(a\cdot x)-(\rho_1^*-\mu_1^*)(a)\circ(-\mu_0^*)(x)+(-\mu_0^*(x))\circ(\rho_1^*-\mu_1^*)(a)-(-\mu_0^*(x))(-\mu_1^*(a))=0.$$
Therefore, $(\huaV^*; (\rho_0^*-\mu_0^*, \rho_1^*-\mu_1^*), (-\mu_0^*, -\mu_1^*))$ is also a strict representation of $\huaA$.
 \end{proof}

 \begin{ex}\label{ex1}
 	Let $\huaA=(A_0, A_{-1}, \dM, \cdot)$ be a strict pre-Lie $2$-algebra. Define $L_0,R_0:A_0\longrightarrow \End_{\dM}^0(\huaA)=\End(A_0)\oplus \End(A_{-1})$ and $L_1, R_1:A_{-1}\longrightarrow \End^{-1}(\huaA)=\Hom(A_0, A_{-1})$ by 
 \begin{eqnarray*}
 	L_0(x)y=x\cdot y, \quad L_0(x)a=x\cdot a,\quad R_0(x)y=y\cdot x, \\
 	R_0(x)a=a\cdot x,\quad L_1(a)x=a\cdot x,\quad R_1(a)x=x\cdot a,
 \end{eqnarray*}
 for any $x, y \in A_0$, $a \in A_{-1}$. Then $(\huaA; (L_0, L_1), (R_0, R_1))$ is a strict representation of $\huaA$, which is called a {\bf  regular representation}. Furthermore, $(\huaA^*; (\ad^*_0, \ad^*_1), (-R^*_0, -R^*_1))$ is also a strict representation of $\huaA$, which is called a {\bf  coregular representation}.
 \end{ex}
 
 \emptycomment{\begin{pro}\label{pro}
  Let $\huaA=(A_0, A_{-1}, \dM, \cdot)$ and $\huaA^*=(A_{-1}^*, A_0^*, \dM^*, \circ)$ be two strict pre-Lie $2$-algebras. Denote $\partial: A_{-1}\oplus A_0^* \longrightarrow A_0\oplus A_{-1}^*$ by $\partial(a+x^*)=\dM a+\dM^*x^*$ for any $a \in A_{-1}, x^* \in A_0^*$. Denote $\ast: (A_0\oplus A_{-1}^*) \otimes (A_0\oplus A_{-1}^*)\longrightarrow (A_0\oplus A_{-1}^*)$, $\ast: (A_0\oplus A_{-1}^*) \otimes (A_{-1}\oplus A_0^*)\longrightarrow (A_{-1}\oplus A_0^*)$ and $\ast: (A_{-1}\oplus A_0^*) \otimes (A_0\oplus A_{-1}^*)\longrightarrow (A_{-1}\oplus A_0^*)$ satisfying 
  $$(x+a^*)\ast(y+b^*)=x\cdot y+a^*\circ b^*+\ad_0^*(x)b^*-\huaR_0^*(b^*)x+\mathfrak{ad}_0^*(a^*)y-R_0^*(y)a^*,$$
  $$(x+a^*)\ast (b+y^*)=x\cdot b+a^*\circ y^*+\ad_0^*(x)y^*-\huaR_1^*(y^*)x+\mathfrak{ad}_0^*(a^*)b-R_1^*(b)a^*,$$
  $$(b+y^*)\ast (x+a^*)=b\cdot x+y^*\circ a^*+\ad_1^*(b)a^*-\huaR_0^*(a^*)b+\mathfrak{ad}_1^*(y^*)x-R_0^*(x)y^*,$$
  for any $x+a^*$, $y+b^* \in A_0\oplus A_{-1}^*$, $b+y^* \in A_{-1}\oplus A_0^*$.
  Then $(A_0\oplus A_{-1}^*, A_{-1}\oplus A_0^*, \partial, \ast)$ is a strict pre-Lie $2$-algebra.
 \end{pro}}

 \section{Para-K\"ahler strict Lie 2-algebras, Manin triples and matched pairs of strict pre-Lie 2-algebras}\label{sec:Manin triple}
 In this section, we fist give the categorification of para-K\"ahler Lie algebras and introduce the notion of a para-K\"ahler strict Lie $2$-algebra. Then we introduce the special para-K\"ahler strict Lie $2$-algebras, Manin triples and matched pairs of strict pre-Lie $2$-algebras. The equivalences between those structures are established.
 \subsection{Para-K\"ahler strict Lie 2-algebra and Manin triples of strict pre-Lie 2-algebras}
 A nondegenerate degree $1$ graded antisymmetric bilinear form $\omega$ on a strict pre-Lie $2$-algebra $\huaA$ is called {\bf invariant} if 
 \begin{eqnarray}
\label{eq:invariance 1} \omega(\dM \alpha,\beta)&=&(-1)^{|\alpha||\beta|}\omega(\dM \beta,\alpha),\\
\label{eq:invariance 2}\omega(\alpha\cdot \beta,\gamma)&=&(-1)^{|\beta||\gamma|}\omega([\alpha,\gamma],\beta),\quad \alpha,\beta,\gamma\in \huaA.
\end{eqnarray}

The nondegenerate degree $1$ graded antisymmetric bilinear form $\omega$ means that 
$\omega$ induces the isomorphisms $A_0\simeq (A_{-1})^*$ and  $A_{-1}\simeq (A_{0})^*$. Furthermore, the bilinear form $\omega$ is given by
\begin{equation}
\omega(x+a, y+b)=\omega(x,b)+\omega(a,y),\quad x,y\in A_0,a,b\in A_{-1}.
\end{equation}

\begin{defi}
  A {\bf quadratic strict pre-Lie $2$-algebra} is a pair $(\huaA,\omega)$, where $\huaA$ is a strict pre-Lie $2$-algebra and $\omega$ is a nondegenerate graded  degree $1$ invariant bilinear form.
\end{defi}

 \begin{defi}
 	A {\bf Manin triple of strict pre-Lie $2$-algebras} is a triple $((\huaA, \omega), \huaA_1, \huaA_2)$, where 
 	$(\huaA, \omega)$ is an even dimensional quadratic strict pre-Lie $2$-algebra, $\huaA_1$ and $\huaA_2$ are strict sub-pre-Lie $2$-algebras of $\huaA$, both isotropic with respect to $\omega$ in the sense of 
    $$\omega(\alpha_1,\beta_1)=0,\quad \omega(\alpha_2,\beta_2)=0,\quad \alpha_1,\beta_1\in \huaA_1,\alpha_2,\beta_2\in \huaA_2,$$
    and $\huaA=\huaA_1 \oplus \huaA_2$ as graded vector spaces.
 \end{defi}
 
  Let $\huaA=(A_0, A_{-1}, \dM, \cdot)$ and $\huaA^*=(A_{-1}^*, A_0^*, \dM^*, \circ)$ be two strict pre-Lie $2$-algebras. On the direct sum of complexes, $\partial: A_{-1}\oplus A_0^* \stackrel{\dM+\dM^*}{\longrightarrow} A_0\oplus A_{-1}^*$, there is a nondegenerate invariant degree $1$ graded antisymmetric bilinear form $\omega$ given by, 
\begin{equation}\label{omega}
\omega(x+a+x^*+a^*, y+b+y^*+b^*)=\left\langle x^*, y\right\rangle+\left\langle a^*, b\right\rangle-\left\langle x, y^*\right\rangle-\left\langle a, b^*\right\rangle,
\end{equation}
which is called {\bf standard antisymmetric bilinear form} on $\huaA \oplus \huaA^*$. We can introduce an operation $\ast$ on $\huaA\oplus\huaA^*$ such that $\omega$ is invariant, as follows: 
  \begin{eqnarray}
 \label{eq:std 1}   (x+a^*)\ast(y+b^*)&=&x\cdot y+a^*\circ b^*+\ad_0^*(x)b^*-\huaR_0^*(b^*)x+\mathfrak{ad}_0^*(a^*)y-R_0^*(y)a^*,\\
  \label{eq:std 2}   (x+a^*)\ast (b+y^*)&=&x\cdot b+a^*\circ y^*+\ad_0^*(x)y^*-\huaR_1^*(y^*)x+\mathfrak{ad}_0^*(a^*)b-R_1^*(b)a^*,\\
   \label{eq:std 3}  (b+y^*)\ast (x+a^*)&=&b\cdot x+y^*\circ a^*+\ad_1^*(b)a^*-\huaR_0^*(a^*)b+\mathfrak{ad}_1^*(y^*)x-R_0^*(x)y^*,
  \end{eqnarray}
  where $x+a^*$, $y+b^* \in A_0\oplus A_{-1}^*$, $b+y^* \in A_{-1}\oplus A_0^*$.
 
  If $\huaA \oplus \huaA^*=(A_0\oplus A_{-1}^*, A_{-1}\oplus A_0^*, \partial=\dM+\dM^*, \ast)$ is a strict pre-Lie $2$-algebra (in this case, $\huaA$ and $\huaA^*$ are sub-pre-Lie $2$-algebras naturally), then we obtain a Manin triple $(\huaA \oplus \huaA^*; \huaA, \huaA^*)$ with respect to the standard bilinear form $\omega$, which we called the {\bf standard Manin triple} of strict pre-Lie $2$-algebras.

  \begin{pro}
  	Any Manin triple of strict pre-Lie $2$-algebras $(\huaA; \huaA_1, \huaA_2)$ with respect to a nondegenerate invariant degree $1$ graded antisymmetric bilinear form $\omega$ is isomorphic to a standard Manin triple $(\huaA_1 \oplus \huaA_1^*; \huaA_1, \huaA_1^*)$.
  \end{pro}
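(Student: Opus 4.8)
The plan is to use $\omega$ to identify $\huaA_2$ with the dual $\huaA_1^*$, and then to check that the multiplication of $\huaA$ transported along this identification is exactly the standard operation $\ast$ of \eqref{eq:std 1}--\eqref{eq:std 3}. First I would observe that, since $\omega$ is nondegenerate and $\huaA_1,\huaA_2$ are isotropic with $\huaA=\huaA_1\oplus\huaA_2$, the restriction of $\omega$ to $\huaA_1\times\huaA_2$ is a perfect graded pairing; in particular $\dimension\huaA_1=\dimension\huaA_2$ and, as $\omega$ has degree $1$, it pairs $(\huaA_1)_0$ with $(\huaA_2)_{-1}$ and $(\huaA_1)_{-1}$ with $(\huaA_2)_0$ nondegenerately. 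Hence $\theta\colon\huaA_2\to\huaA_1^*$ defined by $\langle\theta(\beta),\alpha\rangle=\omega(\beta,\alpha)$ for $\alpha\in\huaA_1$ is a degree-preserving linear isomorphism, sending $(\huaA_2)_0$ onto $(\huaA_1)_{-1}^*=(\huaA_1^*)_0$ and $(\huaA_2)_{-1}$ onto $(\huaA_1)_0^*=(\huaA_1^*)_{-1}$; with the convention \eqref{omega} no extra sign is needed.

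Next I would transport the strict pre-Lie $2$-algebra structure of the subalgebra $\huaA_2$ across $\theta$, which by definition makes $\theta$ an isomorphism of strict pre-Lie $2$-algebras and fixes the structure maps $\dM^*$ and $\circ$ on $\huaA_1^*$; the fact that the differential of $\huaA_2$ becomes precisely $\dM^*$ is forced by the invariance identity \eqref{eq:invariance 1}. It then remains only to analyse the cross terms of the multiplication $\cdot$ of $\huaA$ under $\Phi:=\Id_{\huaA_1}\oplus\theta$, since the ``diagonal'' products $\alpha\cdot\alpha'$ with $\alpha,\alpha'\in\huaA_1$ and $\beta\cdot\beta'$ with $\beta,\beta'\in\huaA_2$ are reproduced by $x\cdot y$ and $a^*\circ b^*$ automatically.

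The core of the proof is thus a case-by-case computation over the degree combinations of a cross product $\alpha\cdot\beta$ (and $\beta\cdot\alpha$), with $\alpha\in\huaA_1,\ \beta\in\huaA_2$. For each case I would extract the $\huaA_1$-component of $\alpha\cdot\beta$ by pairing against an arbitrary element of $\huaA_2$ and its $\huaA_2$-component by pairing against an arbitrary element of $\huaA_1$, using isotropy to discard the unwanted half. Invariance \eqref{eq:invariance 2} then rewrites each $\omega(\alpha\cdot\beta,\gamma)$ as $\pm\,\omega([\alpha,\gamma],\beta)$; expanding $[\alpha,\gamma]=\frkl_2(\alpha,\gamma)$ and unwinding $\theta$ turns these pairings into the defining relations of the coregular representation $(\ad^*,-R^*)$ of $\huaA_1$ on $\huaA_1^*$ and of the coregular representation $(\AD^*,-\huaR^*)$ of $\huaA_1^*$ on $\huaA_1$ (see Example \ref{ex1}). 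Matching the resulting expressions term by term against \eqref{eq:std 1}--\eqref{eq:std 3} yields $\Phi(\alpha\cdot\beta)=\Phi(\alpha)\ast\Phi(\beta)$ in every case.

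Finally I would conclude that $\Phi$ is a graded-linear isomorphism carrying $\huaA_1$ to $\huaA_1$ and $\huaA_2$ to $\huaA_1^*$, that it transports $\cdot$ to $\ast$ (so that $(\huaA_1\oplus\huaA_1^*,\ast)$ is automatically a strict pre-Lie $2$-algebra, being isomorphic to $\huaA$), and that it pulls the standard form \eqref{omega} back to $\omega$ by the very definition of $\theta$; hence $\Phi$ is an isomorphism of Manin triples onto the standard one. I expect the main obstacle to be the sign bookkeeping in this last computation: the degree-$1$ graded antisymmetry of $\omega$ and the factor $(-1)^{|\beta||\gamma|}$ in \eqref{eq:invariance 2} must be tracked with care so that the several degree cases assemble into exactly the coadjoint and coregular operators, with the correct signs, recorded in \eqref{eq:std 1}--\eqref{eq:std 3}.
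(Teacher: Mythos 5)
Your proposal is correct and follows essentially the same route as the paper's (much terser) proof: use nondegeneracy of $\omega$ together with isotropy of $\huaA_1,\huaA_2$ and the degree-$1$ condition to identify $(\huaA_2)_0\simeq(\huaA_1)_{-1}^*$ and $(\huaA_2)_{-1}\simeq(\huaA_1)_0^*$, and then invoke invariance of $\omega$ to force the cross products to be the coadjoint/coregular actions appearing in \eqref{eq:std 1}--\eqref{eq:std 3}. The paper states these two steps without carrying out the case-by-case pairing computation; your plan simply fills in that verification.
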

\begin{proof}
The nondegeneracy of $\omega$ implies that $\huaA_2$ is isomorphic to $\huaA_1^*$. Furthermore, because $\omega$ is the degree $1$, we have $(\huaA_2)_0\simeq (\huaA_1)^*_{-1}$ and $(\huaA_2)_{-1}\simeq (\huaA_1)^*_{0}$. The invariancy of
$\omega$ implies that the operation $\ast$ must be given by \eqref{eq:std 1}-\eqref{eq:std 3}.
\end{proof}

\begin{defi}
Let $(\g=\g_0\oplus \g_{-1};\omega=(\omega_1,\omega_2))$ be a symplectic strict Lie $2$-algebra. A strict sub-Lie $2$-algebra $\h=\h_0\oplus \h_1$ of $(\g;\omega)$ is called {\bf Lagrangian} if $\h^\perp=\h$, where
$$\h^\perp=\{u\in \g\mid \omega(u,v)=0,\quad v\in \h\}.$$ Furthermore, a symplectic strict Lie $2$-algebra  $(\g;\omega)$ is called a {\bf para-K\"ahler strict Lie $2$-algebra} if $\g$ is a direct sum of the underlying graded vector spaces of two Lagrangian strict sub-Lie $2$-algebra $\g^+$ and $\g^-$. A para-K\"ahler strict Lie $2$-algebra is called a {\bf special para-K\"ahler strict Lie $2$-algebra} if $\omega_1=0$, that is, $\omega\in \Hom(\g_0\wedge \g_{-1},\K)$. We denote a (special) para-K\"ahler strict Lie $2$-algebra by $((\g,\omega),\g^+,\g^-)$.
\end{defi}

\begin{thm}
 Let $((\huaA=(A_0,A_{-1},\dM,\ast), \omega), \huaA_1, \huaA_2)$ be a Manin triple of strict pre-Lie $2$-algebras. Then $((\huaA^c=(A_0,A_{-1},\dM,[-,-]^c]), \omega), \huaA^c_1, \huaA^c_2)$ is a special para-K\"ahler strict Lie $2$-algebra, where the bracket $[-,-]^c:A_i\times A_j\rightarrow A_{i+j},~-1\leq i+j\leq 0$ is given by \eqref{eq:l21} and \eqref{eq:l22}.
 
 Conversely, if $((\g=(\g_0,\g_{-1},\dM,[-,-]),\omega),\g^+,\g^-)$ is a special para-K\"ahler strict Lie $2$-algebra, then $((\g=(\g_0,\g_{-1},\dM,\ast),\omega),\g^+,\g^-)$ is a Manin triple of strict pre-Lie $2$-algebras, where the operation $\ast:\g_i\times \g_j\rightarrow \g_{i+j}~-1\leq i+j\leq 0$ is given by 
\begin{equation}
\omega(u\ast v,w)=(-1)^{|v||w|}\omega([u,w],v),\quad u,v,w\in\g.
\end{equation}
\end{thm}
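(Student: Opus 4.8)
I would prove the two implications separately, each time reducing to the sub-adjacent construction of Theorem \ref{thm:main1} and to the nondegeneracy of $\omega$. For the forward implication I start from the sub-adjacent strict Lie $2$-algebra $\huaA^c=\huaG(\huaA)$ (the strict case of Theorem \ref{thm:main1}), whose bracket is exactly $[-,-]^c=\frkl_2$ of \eqref{eq:l21}--\eqref{eq:l22}, and argue that $\omega$ is a symplectic structure on $\huaA^c$. Since $\omega$ has degree $1$ it pairs $A_0$ only with $A_{-1}$, so as a Lie $2$-algebra $2$-cochain $(\omega_1,\omega_2)$ one has $\omega_1=\omega|_{\wedge^2A_0}=0$ and $\omega_2=\omega|_{A_0\wedge A_{-1}}$; this vanishing of $\omega_1$ is exactly what makes the output \emph{special}. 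Closedness of $(0,\omega_2)$, namely \eqref{eq:closed1}--\eqref{eq:closed3}, then follows from invariance: \eqref{eq:closed1} is void, \eqref{eq:closed3} comes from graded antisymmetry together with \eqref{eq:invariance 1}, and for \eqref{eq:closed2} one writes $[x,y]^c=x\cdot y-y\cdot x$, uses \eqref{eq:invariance 2} to rewrite $\omega(x\cdot y,h)$ as $\omega([x,h]^c,y)$ and $\omega(y\cdot x,h)$ as $\omega([y,h]^c,x)$, and sees the three cyclic terms cancel after applying the skew-symmetry of $\frkl_2$. Finally $\huaA_1,\huaA_2$, being closed under $\cdot$, are closed under the commutator and hence give strict sub-Lie $2$-algebras $\huaA^c_1,\huaA^c_2$; isotropy together with $\huaA=\huaA_1\oplus\huaA_2$ and nondegeneracy of $\omega$ forces each to be maximal isotropic, $(\huaA^c_i)^\perp=\huaA^c_i$, hence Lagrangian, yielding the special para-K\"ahler strict Lie $2$-algebra.

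For the converse the operation $\ast$ is well defined by $\omega(u\ast v,w)=(-1)^{|v||w|}\omega([u,w],v)$ precisely because $\omega$ is nondegenerate. Here one should note that, since $\omega_1=0$, the $\g_0$-valued product $x\ast y$ is pinned down by pairing against $\g_{-1}$ through $\omega_2(x\ast y,h)=-\omega_2(y,[x,h])$ rather than against $\g_0$; this is the $\omega_2$-version of Proposition \ref{pro:symplectic strict L2-preL2} and cannot be quoted from it verbatim. I would check the strict pre-Lie $2$-algebra axioms by the same mechanism as that proposition: axioms $(a_1)$--$(a_3)$ follow from condition (i) of Definition \ref{Lie 2-algebra} and \eqref{eq:closed3}, while for each associator identity $(b_1)$--$(b_3)$ I would pair the graded associator against a suitable test element via $\omega_2$, iterate the defining relation to turn every pre-Lie product into a nested $\frkl_2$-bracket, and invoke the strict Jacobi identities (conditions (ii) and (iii) of Definition \ref{Lie 2-algebra}) to obtain $0$; the nondegeneracy of $\omega_2$ then yields the identity. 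Invariance of $\omega$ for this pre-Lie structure is automatic, since \eqref{eq:invariance 2} is the defining relation for $\ast$ and \eqref{eq:invariance 1} is \eqref{eq:closed3}.

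The remaining and conceptually central point is that $\g^+$ and $\g^-$ are strict sub-pre-Lie $2$-algebras. For $u,v\in\g^+$ and any $w\in\g^+$ one has $\omega(u\ast v,w)=(-1)^{|v||w|}\omega([u,w],v)$; since $\g^+$ is a strict sub-Lie $2$-algebra, $[u,w]\in\g^+$, and then $\omega([u,w],v)=0$ by the isotropy of $\g^+$. Hence $u\ast v\in(\g^+)^\perp=\g^+$ because $\g^+$ is Lagrangian, and symmetrically for $\g^-$. Together with $\g=\g^+\oplus\g^-$ and the isotropy and invariance already established, this exhibits $(((\g,\ast),\omega),\g^+,\g^-)$ as a Manin triple of strict pre-Lie $2$-algebras.

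I expect the main obstacle to be the associator identities $(b_1)$--$(b_3)$ in the converse: because $\omega_1=0$, Proposition \ref{pro:symplectic strict L2-preL2} does not apply directly and the $\g_0$-valued product must be resolved through $\g_{-1}$, so the three bidegree cases must be run separately with careful bookkeeping of the graded signs $(-1)^{|v||w|}$ and of which of the identities (i)--(iv) of Definition \ref{Lie 2-algebra} is invoked. By contrast, the Lagrangian-to-subalgebra step and the invariance-versus-closedness dictionary are both short once the degree conventions are fixed.
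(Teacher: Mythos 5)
Your proposal is correct and follows essentially the same route as the paper: in the forward direction, pass to the sub-adjacent commutator, note $\omega_1=0$ by the degree-$1$ condition, and convert invariance \eqref{eq:invariance 1}--\eqref{eq:invariance 2} into the closedness conditions \eqref{eq:closed1}--\eqref{eq:closed3}; in the converse, build $\ast$ from $\omega$ and use the Lagrangian property $(\g^\pm)^\perp=\g^\pm$ together with $[u,w]\in\g^\pm$ to get closure of $\ast$ on $\g^\pm$, exactly as the paper does. The one point where you go beyond the paper deserves mention: the paper's converse simply cites Proposition \ref{pro:symplectic strict L2-preL2}, whereas you correctly observe that when $\omega_1=0$ the defining equation \eqref{eq:sym-preL21} of that proposition determines nothing (and its proof invokes nondegeneracy of $\omega_1$), so the $\g_0$-valued product must instead be pinned down through $\omega_2$ and the axioms $(a_1)$--$(b_3)$ re-verified by the same pairing-and-Jacobi mechanism — your version repairs this imprecision in the paper's own write-up.
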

\begin{proof}
Let $\huaA=(A_0,A_{-1},\dM,\ast)$ be a strict pre-Lie $2$-algebra. Then $\huaA^c=(A_0,A_{-1},\dM,[-,-]^c])$ is a strict Lie $2$-algebra. Furthermore,  since $\huaA_1$ and $ \huaA_2$ are strict sub-pre-Lie $2$-algebras, $\huaA^c_1$ and $ \huaA^c_2$ are strict sub-Lie $2$-algebras. By the fact that $\omega$ is a nondegenerate degree $1$ graded antisymmetric bilinear form and $\huaA_1$ and $\huaA_2$ are isotropic with respect to $\omega$, for $x_1,y_1,z_1\in (\huaA_1)_0$ and $x_2,y_2,z_2\in (\huaA_2)_0$, we have $\omega(x_1+x_2,y_1+y_2)=0,$
which implies that 
$$\omega([x_1+x_2,y_1+y_2]^c,z_1+z_2)+\omega([y_1+y_2,z_1+z_2]^c,x_1+x_2)+\omega([z_1+z_2,x_1+x_2]^c,y_1+y_2)=0.$$

For $a_1\in (\huaA_1)_{-1}$ and $a_2\in (\huaA_2)_{-1}$, by \eqref{eq:invariance 2}, we have
\begin{eqnarray*}
&&\omega([x_1+x_2,y_1+y_2]^c,a_1+a_2)+\omega([y_1+y_2,a_1+a_2]^c,x_1+x_2)+\omega([a_1+a_2,x_1+x_2]^c,y_1+y_2)\\
&=&\omega([x_1+x_2,y_1+y_2]^c,a_1+a_2)+\omega((y_1+y_2)\ast (x_1+x_2),a_1+a_2)\\
&&-\omega((x_1+x_2)\ast (y_1+y_2), a_1+a_2)\\
&=&\omega([x_1+x_2,y_1+y_2]^c-(x_1+x_2)\ast (y_1+y_2)+(y_1+y_2)\ast (x_1+x_2),a_1+a_2)\\
&=&0.
\end{eqnarray*}
By \eqref{eq:invariance 1}, we have
$$\omega(x_1+x_2,\dM (a_1+a_2))=0,\quad \omega(\dM(a_1+a_2),b_1+b_2)=\omega(a_1+a_2,\dM(b_1+b_2)).$$
Therefore, $((\huaA^c=(A_0,A_{-1},\dM,[-,-]^c]), \omega), \huaA^c_1, \huaA^c_2)$ is a special para-K\"ahler strict Lie $2$-algebra.

Conversely, if $(\g=(\g_0,\g_{-1},\dM,[-,-]),\omega)$ is a symplectic strict Lie $2$-algebra, by Proposition \ref{pro:symplectic strict L2-preL2}, $\g=(\g_0,\g_{-1},\dM,\ast)$ is a strict pre-Lie $2$-algebra. For $x_+,y_+\in \g_0^+$, since $\g^+$ is a Lagrangian sub-Lie $2$-algebra, we have 
$$\omega(x_+\ast y_+,z_++h_+)=-\omega(y_+,[x_+,z_++h_+])=0,\quad \forall~z_+\in\g_0^+,h_+\in \g_{-1}^+, $$
which implies  $x_+\ast y_+\in \g^+$. If $x_+\ast y_+\notin \g_0^+$, there exists $x_-\in \g_{0}^-$ such that $\omega(x_+\ast y_+,x_-) \neq 0$. But $\omega(x_+\ast y_+,x_-)=-\omega(y_+,[x_+,x_-])=0$ because of $\omega\mid_{\g_0\times\g_0}=0$. This implies that $x_+\ast y_+\in \g_0^+$. Furthermore, we can show that $x_+\ast h_+\in \g_1^+$ for all $x_+\in \g_0^+$ and $h_+\in  \g_1^+$. Thus $\g^+$ is a Lagrangian sub-pre-Lie $2$-algebra. Similarly, $\g^-$ is also a Lagrangian sub-pre-Lie $2$-algebra. The rest can be obtained directly. We omit the details.

\end{proof}

 \subsection{Matched pair of strict pre-Lie 2-algebras}
 In this subsection, we give the notion of matched pair of strict pre-Lie $2$-algebras.
 \begin{thm}
 Let $\huaA=(A_0, A_{-1}, \dM, \cdot)$ and $\huaA'=(A_0', A_{-1}', \dM', \circ)$ be two strict pre-Lie $2$-algebras. Let $\rho=(\rho_0, \rho_1), \mu=(\mu_0, \mu_1): \huaA \longrightarrow \End(\huaA')$ and $\rho'=(\rho_0', \rho_1'), \mu'=(\mu_0', \mu_1'): \huaA' \longrightarrow \End(\huaA)$ be strict representations of $\huaA$ and $\huaA'$ on $\huaA'$ and $\huaA$, respectively, satisfying the following compatibility conditions:
 \begin{eqnarray}\label{matched pair}
 \label{11} \mu_1'(a')(x\cdot y-y\cdot x)&=&x\cdot (\mu_1'(a')y)-y\cdot (\mu_1'(a')x)+\mu_1'(\rho_0(y)a')x-\mu_1'(\rho_0(x)a')y,\\
 \label{22} \mu_1(a)(x'\circ y'-y'\circ x')&=&x'\circ (\mu_1(a)y')-y'\circ (\mu_1(a)x')+\mu_1(\rho_0'(y')a)x'-\mu_1(\rho_0'(x')a)y',\\
 \label{33} \mu_0'(z')(x\cdot y-y\cdot x)&=&x\cdot (\mu_0'(z')y)-y\cdot (\mu_0'(z')x)+\mu_0'(\rho_0(y)z')x-\mu_0'(\rho_0(x)z')y,\\
  \label{44} \mu_0(z)(x'\circ y'-y'\circ x')&=&x'\circ (\mu_0(z)y')-y'\circ (\mu_0(z)x')+\mu_0(\rho_0'(y')z)x'-\mu_0(\rho_0'(x')z)y',\\
 \label{55} \mu_0'(y')(x\cdot a-a\cdot x)&=&x\cdot (\mu_0'(y')a)-a\cdot (\mu_0'(y')x)+\mu_1'(\rho_1(a)y')x-\mu_0'(\rho_0(x)y')a,\\
  \label{66} \mu_0(y)(x'\circ a'-a'\circ x')&=&x'\circ (\mu_0(y)a')-a'\circ (\mu_0(y)x')+\mu_1(\rho_1'(a')y)x'-\mu_0(\rho_0'(x')y)a',\\
  \label{77} \rho_0(x)(y'\circ a')&=&(\rho_0(x)y'-\mu_0(x)y')\circ a'+\rho_0(\mu_0'(y')x-\rho_0'(y')x)a'\\
  \nonumber&&+y'\circ (\rho_0(x)a')+\mu_1(\mu_1'(a')x)y',\\
  \label{88} \rho_0'(x')(y\cdot a)&=&(\rho_0'(x')y-\mu_0'(x')y)\cdot a+\rho_0'(\mu_0(y)x'-\rho_0(y)x')a\\
  \nonumber&&+y\cdot (\rho_0'(x')a)+\mu_1'(\mu_1(a)x')y,\\
  \label{99} \rho_0(x)(a'\circ y')&=&(\rho_0(x)a'-\mu_0(x)a')\circ y'+\rho_1(\mu_1'(a')x-\rho_1'(a')x)y'\\
  \nonumber&&+a'\circ (\rho_0(x)y')+\mu_0(\mu_0'(y')x)a',\\
  \label{1010} \rho_0'(x')(a\cdot y)&=&(\rho_0'(x')a-\mu_0'(x')a)\cdot y+\rho_1'(\mu_1(a)x'-\rho_1(a)x')y\\
  \nonumber&&+a\cdot (\rho_0'(x')y)+\mu_0'(\mu_0(y)x')a,\\
  \label{1111} \rho_1(a)(x'\circ y')&=&(\rho_1(a)x'-\mu_1(a)x')\circ y'+\rho_1(\mu_0'(x')a-\rho_0'(x')a)y'\\
 \nonumber&&+x'\circ (\rho_1(a)y')+\mu_1(\mu_0'(y')a)x',\\
 \label{1212} \rho_1'(a')(x\cdot y)&=&(\rho_1'(a')x-\mu_1'(a')x)\cdot y+\rho_1'(\mu_0(x)a'-\rho_0(x)a')y\\
 \nonumber&&+x\cdot (\rho_1'(a')y)+\mu_1'(\mu_0(y)a')x,\\
 \label{1313} \rho_0(x)(y'\circ z')&=&(\rho_0(x)y'-\mu_0(x)y')\circ z'+\rho_0(\mu_0'(y')x-\rho_0'(y')x)z'\\
 \nonumber&&+y'\circ (\rho_0(x)z')+\mu_0(\mu_0'(z')x)y',\\
 \label{1414} \rho_0'(x')(y\cdot z)&=&(\rho_0'(x')y-\mu_0'(x')y)\cdot z+\rho_0'(\mu_0(y)x'-\rho_0(y)x')z\\
 \nonumber&&+y\cdot (\rho_0'(x')z)+\mu_0'(\mu_0(z)x')y.
 \end{eqnarray}
 Then there exists a strict pre-Lie $2$-algebra $(A_0\oplus A_0', A_{-1}\oplus A_{-1}', \dM+\dM', \ast)$, where $\ast$ is given by
 \begin{eqnarray}
 (x+x')\ast(y+y')&=&x\cdot y+x'\circ y'+\rho_0(x)y'+\mu_0'(y')x+\rho_0'(x')y+\mu_0(y)x',\\
 (x+x')\ast(a+a')&=&x\cdot a+x'\circ a'+\rho_0(x)a'+\mu_1'(a')x+\rho_0'(x')a+\mu_1(a)x',\\
 (a+a')\ast(x+x')&=&a\cdot x+a'\circ x'+\rho_1(a)x'+\mu_0'(x')a+\rho_1'(a')x+\mu_0(x)a',
 \end{eqnarray}
 for any $x+x', y+y' \in A_0\oplus A_0'$, $a+a' \in A_{-1}\oplus A_{-1}'$.
 
 Conversely, given a strict pre-Lie $2$-algebra $(A_0\oplus A_0', A_{-1}\oplus A_{-1}', \dM+\dM', \ast)$, in which $\huaA=(A_0, A_{-1}, \dM, \cdot)$ and $\huaA'=(A_0', A_{-1}', \dM', \circ)$ are strict sub-pre-Lie $2$-algebras of $\huaA+\huaA'$. Then there exist strict representations $\rho=(\rho_0, \rho_1)$, $\mu=(\mu_0, \mu_1)$ of $\huaA$ on $\huaA'$ and $\rho'=(\rho_0', \rho_1')$, $\mu'=(\mu_0', \mu_1')$ of $\huaA'$ on $\huaA$ satisfying above fourteen equations such that $\ast$ is given.
 \end{thm}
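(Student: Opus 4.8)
The plan is to prove both implications by checking the defining axioms of a strict pre-Lie $2$-algebra (Definition \ref{defi:2-term pre} with $l_3=0$) for the operation $\ast$ on the direct sum, and to organize every computation by sorting the resulting terms according to how many of the factors are ``primed'' (come from $\huaA'$) and into which summand the output lands. For the forward direction I would first observe that $\ast$ restricts to $\cdot$ on $\huaA$ and to $\circ$ on $\huaA'$, so that in each axiom the purely unprimed and purely primed contributions are automatically handled by the pre-Lie $2$-algebra axioms of $\huaA$ and $\huaA'$. Only the mixed terms survive, and these are exactly what the fourteen compatibility conditions and the representation axioms of Definition \ref{Representations of strict pre-Lie 2-algebras} are designed to kill.

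First I would verify conditions $(a_1)$--$(a_3)$, namely that $\partial=\dM+\dM'$ is a chain map with respect to $\ast$. Expanding, say, $(x+x')\ast(a+a')$ and applying $\partial$, the pure terms close up by the strictness of $\huaA$ and $\huaA'$, while the mixed terms such as $\partial(\rho_0(x)a')$ and $\partial(\mu_1'(a')x)$ collapse because $\rho=(\rho_0,\rho_1)$ and $\mu'=(\mu_0',\mu_1')$ are chain maps (equivalently, strict representations), and the analogous identities hold for $\mu$ and $\rho'$. This step is essentially formal and produces no new constraints.

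The heart of the proof is the three associator identities $(b_1)$--$(b_3)$. For $(b_1)$ I would substitute $x_i+x_i'$ ($i=0,1,2$) into the pre-Lie associator for $\ast$, expand using the three multiplication formulas, and then sort the output by its $A_0$- versus $A_0'$-component and by the number of primed inputs. By multilinearity it suffices to treat inputs of pure type: the configurations with all inputs unprimed or all primed vanish by axiom $(b_1)$ for $\huaA$ and $\huaA'$ respectively, while the configurations with one or two primed inputs yield, upon projection onto each summand, the representation identity \eqref{eq:rep1} for the two pairs $(\rho,\mu)$ and $(\rho',\mu')$ together with the compatibility equations \eqref{11}--\eqref{44} and \eqref{1313}--\eqref{1414}. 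Repeating this with a single degree $-1$ input---so that the associators $(b_2)$ and $(b_3)$ now take values in $A_{-1}\oplus A_{-1}'$---produces the remaining equations \eqref{55}--\eqref{1212} along with the representation identities \eqref{eq:rep2} and \eqref{eq:rep3}. The structural point I would emphasize is that the mixed part of each associator vanishes \emph{if and only if} the listed equations hold; this proves the forward implication and simultaneously prepares the converse.

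For the converse I would reverse this bookkeeping. Assuming $(A_0\oplus A_0', A_{-1}\oplus A_{-1}', \partial, \ast)$ is a strict pre-Lie $2$-algebra in which $\huaA$ and $\huaA'$ are strict sub-pre-Lie $2$-algebras, I would \emph{define} the eight operators as the components of $\ast$ across the two summands, for instance $\rho_0(x)y':=\pr_{A_0'}(x\ast y')$ and $\mu_0'(y')x:=\pr_{A_0}(x\ast y')$, with the degree $-1$ analogues defined the same way. Restricting axioms $(b_1)$--$(b_3)$ of the big algebra to inputs lying in a single summand and projecting onto the complementary summand recovers precisely the representation axioms \eqref{eq:rep1}--\eqref{eq:rep3} for $(\rho,\mu)$ and $(\rho',\mu')$, while restricting to mixed inputs and projecting returns the fourteen compatibility equations by the ``if and only if'' just established; the chain-map and differential conditions for the operators come out of $(a_1)$--$(a_3)$ in the same manner. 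The main obstacle throughout is organizational rather than conceptual: each associator expands into a large number of monomials, and the argument succeeds cleanly only if one fixes at the outset a fixed indexing of terms by the triple (degree of output, number of primed factors, target summand), so that each of the fourteen equations is matched to exactly one slot. Keeping the graded signs consistent in the $(b_2)$--$(b_3)$ computations, where a degree $-1$ factor is present, is where sign errors are most likely to appear.
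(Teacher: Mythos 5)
Your plan is essentially the paper's proof: conditions $(a_1)$--$(a_3)$ follow from the chain-map properties of the four structure maps, and $(b_1)$--$(b_3)$ are checked by expanding the associators for $\ast$, the pure terms vanishing by the axioms of $\huaA$ and $\huaA'$ and the mixed terms cancelling via \eqref{eq:rep1}--\eqref{eq:rep3} together with the fourteen compatibility conditions, with the converse obtained by reading the same bookkeeping backwards. Your only slip is organizational: \eqref{11} and \eqref{22} involve a degree $-1$ argument ($a'$, resp.\ $a$) and therefore arise in the $(b_2)$ check (alongside \eqref{77} and \eqref{88}, exactly as in the paper), not in the degree-zero associator $(b_1)$, which consumes precisely \eqref{33}, \eqref{44}, \eqref{1313}, \eqref{1414}; this relabeling does not affect the validity of the argument.
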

 \begin{proof}
Firstly, we will show that ($a_1$)-($a_3$) hold in the definition of strict pre-Lie $2$-algebra.

For $x+x' \in A_0\oplus A_0'$, $a+a', b+b'\in A_{-1}\oplus A_{-1}'$, because $\mu=(\mu_0, \mu_1)$ is a chain map from $2$-term complex of vector spaces $\huaA$ to $\End(\huaA')$ and $\mu'=(\mu_0', \mu_1')$ is a chain map from $2$-term complex of vector spaces $\huaA'$ to $\End(\huaA)$, we have
\begin{eqnarray*}
	(\dM+\dM')((x+x')\ast(a+a'))&=&(\dM+\dM')(x\cdot a+x'\circ a'+\rho_0(x)a'+\mu_1'(a')x+\rho_0'(x')a+\mu_1(a)x')\\
	&=&\dM(x\cdot a+\mu_1'(a')x+\rho_0'(x')a)+\dM'(x'\circ a'+\rho_0(x)a'+\mu_1(a)x')\\
	&=&x\cdot{\dM a}+\mu_0'(\dM'a')x+\rho_0'(x')(\dM a)+x'\circ \dM'a'+\rho_0(x)(\dM'a')+\mu_0(\dM a)x'\\
	&=&(x+x')\ast(\dM+\dM')(a+a').
\end{eqnarray*}
Because $\rho=(\rho_0, \rho_1)$ is a strict representation of the strict Lie $2$-algebra $\huaG(\huaA)$ on $\huaV$ and $\rho'=(\rho_0', \rho_1')$ is a strict representation of the strict Lie $2$-algebra $\huaG(\huaA')$ on $\huaA$, we have
\begin{eqnarray*}
	(\dM+\dM')((a+a')\ast(x+x'))&=&(\dM+\dM')(a\cdot x+a'\circ x'+\rho_1(a)x'+\mu_0'(x')a+\rho_1'(a')x+\mu_0(x)a')\\
	&=&\dM(a\cdot x+\mu_0'(x')a+\rho_1'(a')x)+\dM'(a'\circ x'+\rho_1(a)x'+\mu_0(x)a')\\
	&=&(\dM a)\cdot x+(\dM'a')\circ x'+\mu_0'(x')(\dM a)+\rho_0'(\dM' a')x+\rho_0(\dM a)x'+\mu_0(x)(\dM' a')\\
	&=&((\dM+\dM')(a+a'))\ast(x+x').
\end{eqnarray*}
Similarly, we can get $$((\dM+\dM')(a+a'))\ast(b+b')=(a+a')\ast((\dM+\dM')(b+b')).$$

Secondly, we will show that ($b_1$)-($b_3$) hold in the definition of strict pre-Lie $2$-algebra.

For $x+x',y+y'$ and $z+z'$ in $A_0\oplus A_0'$, by \eqref{33}, \eqref{44}, \eqref{1313}, \eqref{1414}, \eqref{eq:rep1} of Definition \ref{Representations of strict pre-Lie 2-algebras} and Condition {\rm(ii)} of Definition \ref{strict homomorphism}, we have 
\begin{eqnarray*}
   &&((x+x')\ast(y+y'))\ast(z+z')-(x+x')\ast((y+y')\ast(z+z'))\\
   &&- ((y+y')\ast(x+x'))\ast(z+z')+(y+y')\ast((x+x')\ast(z+z'))\\
   &=&\Big((x\cdot y)\cdot z-x\cdot(y\cdot z)- (y\cdot x)\cdot z+y\cdot(x\cdot z)\Big)+\Big((x'\circ y')\circ z'-x'\circ (y'\circ z')-(y'\circ x')\circ z'\\
   &&+y'\circ (x'\circ z')\Big)+\Big(\mu_0(z)(\rho_0(x)y')-\rho_0(x)(\mu_0(z)y')-\mu_0(z)(\mu_0(x)y')+\mu_0(x\cdot z)y'\Big)\\
   &&+\Big(\mu_0(z)(\mu_0(y)x')-\mu_0(y\cdot z)x'-\mu_0(z)(\rho_0(y)x')+\rho_0(y)(\mu_0(z)x')\Big)+\Big(\mu_0'(z')(\mu_0'(y')x)\\
   &&-\mu_0'(y'\circ z')x-\mu_0'(z')(\rho_0'(y')x)+\rho_0'(y')(\mu_0'(z')x)\Big)+\Big(\mu_0'(z')(\rho_0'(x')y)-\rho_0'(x')(\mu_0'(z')y)\\
   &&-\mu_0'(z')(\mu_0'(x')y)+\mu_0'(x'\circ z')y\Big)+\Big((\mu_0'(y')x)\cdot z+\rho_0'(\rho_0(x)y')z-x\cdot(\rho_0'(y')z)-\mu_0'(\mu_0(z)y')x\\
   &&-(\rho_0'(y')x)\cdot z-\rho_0'(\mu_0(x)y')z+\rho_0'(y')(x\cdot z)\Big)+\Big((\rho_0'(x')y)\cdot z+\rho_0'(\mu_0(y)x')z-\rho_0'(x')(y\cdot z)\\
   &&-(\mu_0'(x')y)\cdot z-\rho_0'(\rho_0(y)x')z+y\cdot(\rho_0'(x')z)+\mu_0'(\mu_0(z)x')y\Big)+\Big((\rho_0(x)y')\circ z'+\rho_0(\mu_0'(y')x)z'\\
   &&-\rho_0(x)(y'\circ z')-(\mu_0(x)y')\circ z'-\rho_0(\rho_0'(y')x)z'+y'\circ (\rho_0(x)z')+\mu_0(\mu_0'(z')x)y'\Big)\\
   &&+\Big((\mu_0(y)x')\circ z'+\rho_0(\rho_0'(x')y)z'-x'\circ(\rho_0(y)z')-\mu_0(\mu_0'(z')y)x'-(\rho_0(y)x')\circ z'\\
   &&-\rho_0(\mu_0'(x')y)z'+\rho_0(y)(x'\circ z')\Big)+\Big(\mu_0'(z')(x\cdot y)-x\cdot(\mu_0'(z')y)-\mu_0'(\rho_0(y)z')x\\
   &&-\mu_0'(z')(y\cdot x)+y\cdot(\mu_0'(z')x)+\mu_0'(\rho_0(x)z')y\Big)+\Big(\mu_0(z)(x'\circ y')-x'\circ(\mu_0(z)y')\\
   &&-\mu_0(\rho_0'(y')z)x'-\mu_0(z)(y'\circ x')+y'\circ (\mu_0(z)x')+\mu_0(\rho_0'(x')z)y'\Big)+\Big(\rho_0(x\cdot y)z'\\
   &&-\rho_0(x)(\rho_0(y)z')-\rho_0(y\cdot x)z'+\rho_0(y)(\rho_0(x)z')\Big)+\Big(\rho_0'(x'\circ y')z-\rho_0'(x')(\rho_0'(y')z)\\
   &&-\rho_0'(y'\circ x')z+\rho_0'(y')(\rho_0'(x')z)\Big)\\
   &=&0,
\end{eqnarray*}
which implies that Condition {\rm ($b_1$)} holds in the definition of strict pre-Lie $2$-algebra.

For $x+x', y+y' \in A_0\oplus A_0'$, $a+a' \in A_{-1}\oplus A_{-1}'$, by \eqref{11}, \eqref{22}, \eqref{77}, \eqref{88}, \eqref{eq:rep2} of Definition \ref{Representations of strict pre-Lie 2-algebras} and Condition {\rm(ii)} of Definition \ref{strict homomorphism}, we have
\begin{eqnarray*}
	&&((x+x')\ast(y+y'))\ast(a+a')-(x+x')\ast((y+y')\ast(a+a'))\\
	&&-((y+y')\ast(x+x'))\ast(a+a')+(y+y')\ast((x+x')\ast(a+a'))\\
	&=&\Big((x\cdot y)\cdot a-x\cdot(y\cdot a)-(y\cdot x)\cdot a+y\cdot(x\cdot a)\Big)+\Big((x' \circ y')\circ a'-x'\circ (y' \circ a')-(y'\circ x')\circ a'\\
	&&+y'\circ(x'\circ a')\Big)+\Big(\mu_1(a)(\rho_0(x)y')-\rho_0(x)(\mu_1(a)y')-\mu_1(a)(\mu_0(x)y')+\mu_1(x\cdot a)y'\Big)\\
	&&+\Big(\mu_1(a)(\mu_0(y)x')-\mu_1(y\cdot a)x'-\mu_1(a)(\rho_0(y)x')+\rho_0(y)(\mu_1(a)x')\Big)+\Big(\mu_1'(a')(\mu_0'(y')x)\\
	&&-\mu_1'(y'\circ a')x-\mu_1'(a')(\rho_0'(y')x)+\rho_0'(y')(\mu_1'(a')x)\Big)+\Big(\mu_1'(a')(\rho_0'(x')y)-\rho_0'(x')(\mu_1'(a')y)\\
	&&-\mu_1'(a')(\mu_0'(x')y)+\mu_1'(x'\circ a')y\Big)+\Big((\rho_0(x)y')\circ a'+\rho_0(\mu_0'(y')x)a'-\rho_0(x)(y'\circ a')\\
	&&-(\mu_0(x)y')\circ a'-\rho_0(\rho_0'(y')x)a'+y'\circ(\rho_0(x)a')+\mu_1(\mu_1'(a')x)y'\Big)+\Big((\mu_0(y)x')\circ a'\\
	&&+\rho_0(\rho_0'(x')y)a'-x'\circ (\rho_0(y)a')-\mu_1(\mu_1'(a')y)x'-(\rho_0(y)x')\circ a'-\rho_0(\mu_0'(x')y)a'\\
	&&+\rho_0(y)(x'\circ a')\Big)+\Big((\mu_0'(y')x)\cdot a+\rho_0'(\rho_0(x)y')a-x\cdot(\rho_0'(y')a)-\mu_1'(\mu_1(a)y')x-(\rho_0'(y')x)\cdot a\\
	&&-\rho_0'(\mu_0(x)y')a+\rho_0'(y')(x\cdot a)\Big)+\Big((\rho_0'(x')y)\cdot a+\rho_0'(\mu_0(y)x')a-\rho_0'(x')(y\cdot a)-(\mu_0'(x')y)\cdot a\\
	&&-\rho_0'(\rho_0(y)x')a+y\cdot(\rho_0'(x')a)+\mu_1'(\mu_1(a)x')y\Big)+\Big(\mu_1'(a')(x\cdot y)-x\cdot(\mu_1'(a')y)-\mu_1'(\rho_0(y)a')x\\
	&&-\mu_1'(a')(y\cdot x)+y\cdot(\mu_1'(a')x)+\mu_1'(\rho_0(x)a')y\Big)+\Big(\mu_1(a)(x'\circ y')-x'\circ (\mu_1(a)y')\\
	&&-\mu_1(\rho_0'(y')a)x'-\mu_1(a)(y'\circ x')+y'\circ(\mu_1(a)x')+\mu_1(\rho_0'(x')a)y'\Big)+\Big(\rho_0(x\cdot y)a'\\
	&&-\rho_0(x)(\rho_0(y)a')-\rho_0(y\cdot x)a'+\rho_0(y)(\rho_0(x)a')\Big)+\Big(\rho_0'(x'\circ y')a-\rho_0'(x')(\rho_0'(y')a)\\
	&&-\rho_0'(y'\circ x')a+\rho_0'(y')(\rho_0'(x')a)\Big)\\
	&=&0,
\end{eqnarray*}
which implies that Condition {\rm ($b_2$)} holds in the definition of strict pre-Lie $2$-algebra. 
 
Similarly, we can get \begin{eqnarray*}
	&((x+x')\ast(a+a'))\ast(y+y')-(x+x')\ast((a+a')\ast(y+y'))\\
	&-((a+a')\ast(x+x'))\ast(y+y')+(a+a')\ast((x+x')\ast(y+y'))=0,
\end{eqnarray*}
which implies that Condition {\rm ($b_3$)} holds in the definition of strict pre-Lie $2$-algebra. 

Then $(A_0\oplus A_0', A_{-1}\oplus A_{-1}', \dM+\dM', \ast)$ is a strict pre-Lie $2$-algebra.
 \end{proof}

 \begin{defi}
 Let $\huaA=(A_0, A_{-1}, \dM, \cdot)$ and $\huaA'=(A_0', A_{-1}', \dM', \circ)$ be two strict pre-Lie $2$-algebras. Suppose that $(\rho_0, \rho_1)$, $(\mu_0, \mu_1): \huaA \longrightarrow \End(\huaA')$ and $(\rho_0', \rho_1')$, $(\mu_0', \mu_1'): \huaA' \longrightarrow \End(\huaA)$ are strict representations of $\huaA$ and $\huaA'$ on $\huaA'$ and $\huaA$, respectively. We call them a {\bf matched pair} if they satisfy \eqref{11}-\eqref{1414}. We denote it by $(\huaA, \huaA'; (\rho_0, \rho_1), (\mu_0, \mu_1), (\rho_0', \rho_1'), (\mu_0', \mu_1'))$.
 \end{defi}
 
In the following, we recall the notion of matched pair of strict Lie $2$-algebras. 
\begin{defi}\label{matched pair of strict Lie $2$-algebras.2}
 	Let $\g=(\g_0, \g_{-1}, \dM, [\cdot, \cdot])$ and $\g'=(\g_0', \g_{-1}', \dM', [\cdot, \cdot]')$ be two strict Lie $2$-algebras. Suppose that $(\mu_0, \mu_1): \g \longrightarrow \End(\g')$ and $(\mu_0', \mu_1'): \g' \longrightarrow \End(\g)$ are strict representations of $\g$ and $\g'$ on $\g'$ and $\g$, respectively. We call them a {\bf matched pair} and denote it by $(\g, \g'; (\mu_0, \mu_1), (\mu_0', \mu_1'))$ if they satisfy the following equations:
\begin{eqnarray}
 		\label{1} \mu_0'(x')[x, y]&=&[x, \mu_0'(x')y]+[\mu_0'(x')x, y]+\mu_0'(\mu_0(y)x')x-\mu_0'(\mu_0(x)x')y;\\
 		\label{2} \mu_0(x)[x', y']'&=&[x', \mu_0(x)y']'+[\mu_0(x)x', y']'+\mu_0(\mu_0'(y')x)x'-\mu_0(\mu_0'(x')x)y';\\
 		\label{3} \mu_1'(h')[x, y]&=&[x, \mu_1'(h')y]+[\mu_1'(h')x, y]+\mu_1'(\mu_0(y)h')x-\mu_1'(\mu_0(x)h')y;\\
 		\label{4} \mu_1(h)[x', y']'&=&[x', \mu_1(h)y']'+[\mu_1(h)x', y']'+\mu_1(\mu_0'(y')h)x'-\mu_1(\mu_0'(x')h)y';\\
 		\label{5} \mu_0'(x')[x, h]&=&[x, \mu_0'(x')h]+[\mu_0'(x')x, h]+\mu_1'(\mu_1(h)x')x-\mu_0'(\mu_0(x)x')h;\\
 		\label{6} \mu_0(x)[x', h']'&=&[x', \mu_0(x)h']'+[\mu_0(x)x', h']'+\mu_1(\mu_1'(h')x)x'-\mu_0(\mu_0'(x')x)h',
 	\end{eqnarray}
where $x,y\in\g_0$, $h\in\g_{-1}$, $x',y'\in\g'_{0}$, $h'\in\g'_{-1}$.
    
 \end{defi}

 \begin{thm}\label{matched pair of strict Lie $2$-algebras}
 Let $(\g, \g'; (\mu_0, \mu_1), (\mu_0', \mu_1'))$ be a matched pair of strict Lie $2$-algebras $\g$ and $\g'$. Then there exists a strict Lie $2$-algebra $(\g\oplus \g', \dM\oplus \dM', [\cdot, \cdot]_{\g\oplus \g'})$, where $[\cdot, \cdot]_{\g\oplus \g'}$ is given by
 	\begin{eqnarray}
 	\label{eq:Lie2mp-bracket1}	[x+x', y+y']_{\g\oplus \g'}&=&[x, y]+\mu_0(x)(y')-\mu_0'(y')x+\mu_0'(x')y-\mu_0(y)x'+[x', y']';\\\
 	\label{eq:Lie2mp-bracket2}	[x+x', h+h']_{\g\oplus \g'}&=&[x, h]+\mu_0(x)(h')-\mu_1'(h')x-\mu_1(h)x'+\mu_0'(x')h+[x', h']'.
 	\end{eqnarray}
 	
 	Conversely, given a strict Lie $2$-algebra $(\g\oplus \g', \dM\oplus \dM', [\cdot, \cdot]_{\g\oplus \g'})$, in which $\g$ and $\g'$ are strict sub-Lie $2$-algebras with respect to the restricted brackets, there exist representations $(\mu_0, \mu_1):\g\longrightarrow \End(\g')$ and $(\mu_0', \mu_1'):\g'\longrightarrow \End(\g)$ satisfying \eqref{1}-\eqref{6} such that the bracket $[\cdot, \cdot]_{\g\oplus \g'}$ is given by \eqref{eq:Lie2mp-bracket1} and \eqref{eq:Lie2mp-bracket2}.
 \end{thm}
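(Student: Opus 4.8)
The plan is to recognize this as the matched-pair (bicross-product) construction for strict Lie $2$-algebras, entirely parallel to the classical Lie-algebra case and to the strict pre-Lie $2$-algebra version established above. For the forward direction I would take the candidate bracket $[\cdot,\cdot]_{\g\oplus\g'}$ defined by \eqref{eq:Lie2mp-bracket1}--\eqref{eq:Lie2mp-bracket2} together with the differential $\dM\oplus\dM'$, and verify the axioms of a strict Lie $2$-algebra from Definition \ref{Lie 2-algebra} (with $\mathfrak{l}_3=0$): graded skew-symmetry, the differential compatibility (i), and the two graded Jacobi identities (ii) and (iii). The organizing principle throughout is to evaluate each axiom on general elements $u+u'$ with $u\in\g$, $u'\in\g'$, and then project the resulting identity onto its $\g$-component and its $\g'$-component separately.

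Skew-symmetry is immediate from the form of \eqref{eq:Lie2mp-bracket1}--\eqref{eq:Lie2mp-bracket2}, since the $\g$- and $\g'$-brackets are skew and the four action terms appear in mutually cancelling skew pairs. Axiom (i) reduces to the chain-map property: that $\dM,\dM'$ intertwine the brackets of $\g,\g'$ (which holds since each is a strict Lie $2$-algebra) together with the fact that $(\mu_0,\mu_1)$ and $(\mu_0',\mu_1')$ are chain maps into the relevant $\End$ complexes, which is part of their being strict representations. The substance is in (ii) and (iii). Expanding the cyclic sum and sorting terms by how many arguments come from $\g$ versus $\g'$, the all-$\g$ (respectively all-$\g'$) contributions reproduce the Jacobi identity of $\g$ (respectively $\g'$) and hence vanish; the contributions with two arguments from one factor and one from the other split into a piece handled by the homomorphism axioms of the four representations (the identities $\mu_0\mathfrak{l}_2(x,y)=[\mu_0(x),\mu_0(y)]$, $\mu_1\mathfrak{l}_2(x,h)=[\mu_0(x),\mu_1(h)]$ and their primed analogues from Definition \ref{strict homomorphism}) and a residual cross piece. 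The plan is to match these residual pieces exactly to \eqref{1}--\eqref{6}: for axiom (ii), the all-degree-$0$ Jacobi, equations \eqref{1} and \eqref{2} absorb the cross terms carrying one, respectively two, arguments from $\g'$; for axiom (iii), in which one of the three arguments has degree $-1$, the four equations \eqref{3}--\eqref{6} account for the cross terms according to which factor the degree $-1$ argument and the distinguished degree $0$ argument belong to.

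For the converse, assume $(\g\oplus\g',\dM\oplus\dM',[\cdot,\cdot]_{\g\oplus\g'})$ is a strict Lie $2$-algebra in which $\g$ and $\g'$ are strict sub-Lie $2$-algebras. I would define the four maps by reading off the mixed brackets: for $x\in\g_0$ and $x'\in\g'_0$ set $\mu_0(x)x'$ to be the $\g'$-component and $-\mu_0'(x')x$ the $\g$-component of $[x,x']_{\g\oplus\g'}$, and define $\mu_1,\mu_1'$ analogously from the brackets with one degree $-1$ entry, so that \eqref{eq:Lie2mp-bracket1}--\eqref{eq:Lie2mp-bracket2} hold by construction. Projecting axioms (i)--(iii) for $[\cdot,\cdot]_{\g\oplus\g'}$ onto the two factors then yields precisely that $(\mu_0,\mu_1)$ and $(\mu_0',\mu_1')$ are strict representations and that \eqref{1}--\eqref{6} hold; this is the forward computation run in reverse, so nothing new is required.

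I expect the only genuine difficulty to be organizational rather than conceptual. The graded Jacobi identities generate a large number of terms once the two-piece bracket \eqref{eq:Lie2mp-bracket1}--\eqref{eq:Lie2mp-bracket2} is substituted, and the crux is a disciplined bookkeeping that assigns each cross term to the correct one of \eqref{1}--\eqref{6} while tracking the grading signs $(-1)^{|u||v|}$ produced by the degree $-1$ arguments. Once the component-sorting scheme is fixed, confirming that the cancellation pattern is captured exactly by the six equations, and no more, is routine but lengthy.
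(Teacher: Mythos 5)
The paper never proves this theorem: it is stated as a recalled result, together with Definition \ref{matched pair of strict Lie $2$-algebras.2}, from the Lie 2-bialgebra literature (\cite{BSZ}), so there is no in-paper argument to compare yours against; your proposal supplies what the paper omits, and it is correct. Your organizing scheme is exactly the right one. For axiom (ii) of Definition \ref{Lie 2-algebra} evaluated on a triple sorted by factors, the pure triples give the Jacobi identities of $\g$ and $\g'$; the mixed triple $(x,y,z')$ splits into \eqref{1} in the $\g$-component and the homomorphism identity $\mu_0\mathfrak{l}_2(x,y)=[\mu_0(x),\mu_0(y)]$ in the $\g'$-component, and dually $(x,y',z')$ gives \eqref{2} plus the analogous identity for $\mu_0'$. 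For axiom (iii), the four mixed cases $(x,y,h')$, $(x',y',h)$, $(x,x',h)$, $(x,x',h')$ yield \eqref{3}, \eqref{4}, \eqref{5}, \eqref{6} respectively in one component, with the other component absorbed by the conditions $\mu_1\mathfrak{l}_2(x,h)=[\mu_0(x),\mu_1(h)]$, $\mu_0\mathfrak{l}_2(x,y)=[\mu_0(x),\mu_0(y)]$ acting in degree $-1$, and their primed analogues; axiom (i) reduces to the chain-map conditions $\mu_0\circ\dM'$ versus $\delta\circ\mu_1$ (and primed), as you say. The converse is indeed the same computation read backwards once the four maps are defined as (signed) projections of the mixed brackets, e.g.\ $\mu_0(x)h'=\pr_{\g'}[x,h']_{\g\oplus\g'}$ and $\mu_1'(h')x=-\pr_{\g}[x,h']_{\g\oplus\g'}$. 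In a full write-up the only place demanding care is the handful of mixed cases of axiom (iii), e.g.\ $(x,x',h)$, where the convention $\mathfrak{l}_2(h,x)=-\mathfrak{l}_2(x,h)$ injects the signs that make the last two terms of \eqref{5} come out as $+\mu_1'(\mu_1(h)x')x-\mu_0'(\mu_0(x)x')h$; but this is, as you predict, routine bookkeeping rather than a conceptual obstacle.
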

 
 \begin{pro}
Let $(\huaA, \huaA'; (\rho_0,\rho_1), (\mu_0,\mu_1), (\rho'_0,\rho'_1), (\mu'_0,\mu'_1))$ be a matched pair of strict pre-Lie $2$-algebras $\huaA$ and $\huaA'$. Then $(\huaG(\huaA),\huaG(\huaA');(\rho_0-\mu_0,\rho_1-\mu_1),(\rho'_0-\mu'_0,\rho'_1-\mu'_1))$ is a matched pair of strict Lie $2$-algebras.
 \end{pro}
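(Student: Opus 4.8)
The plan is to verify the two ingredients of Definition \ref{matched pair of strict Lie $2$-algebras.2} in turn: that $(\rho_0-\mu_0,\rho_1-\mu_1)$ and $(\rho'_0-\mu'_0,\rho'_1-\mu'_1)$ are strict representations of the subadjacent strict Lie $2$-algebras $\huaG(\huaA)$ and $\huaG(\huaA')$, and that the six compatibility equations \eqref{1}--\eqref{6} hold for them. The representation part is immediate from Proposition \ref{pro:rep-pre2-Lie2}: applying it to the strict representation $(\rho_0,\rho_1),(\mu_0,\mu_1)$ of $\huaA$ on $\huaA'$ yields that $(\rho_0-\mu_0,\rho_1-\mu_1)$ is a strict representation of $\huaG(\huaA)$ on $\huaA'$, and a second application to $(\rho'_0,\rho'_1),(\mu'_0,\mu'_1)$ gives that $(\rho'_0-\mu'_0,\rho'_1-\mu'_1)$ is a strict representation of $\huaG(\huaA')$ on $\huaA$. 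Hence the substance of the proof lies entirely in the compatibility conditions.

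For these, I would obtain each of the six Lie-theoretic equations as a linear combination of the fourteen pre-Lie equations \eqref{11}--\eqref{1414}, using that by \eqref{eq:l21} and \eqref{eq:l22} every bracket in $\huaG(\huaA)$ and $\huaG(\huaA')$ is a commutator $u\cdot v-v\cdot u$. Writing $\bar\mu=\rho-\mu$ for the Lie representation and expanding a left-hand side $\bar\mu(\,\cdot\,)(u\cdot v-v\cdot u)$, the ``$\mu$-part'' is rewritten by one pre-Lie identity and the ``$\rho$-part'' by one or two others applied to both orderings of the product, after which the cross terms reorganize into the Lie-bracket form on the right. The expected pairing is: \eqref{1} from \eqref{33} and the antisymmetrization of \eqref{1414}; \eqref{2} from \eqref{44} and \eqref{1313}; \eqref{3} from \eqref{11} and \eqref{1212}; \eqref{4} from \eqref{22} and \eqref{1111}; while the degree-mixing equations \eqref{5} and \eqref{6} are assembled from the triples \eqref{55},\eqref{88},\eqref{1010} and \eqref{66},\eqref{77},\eqref{99} respectively. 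A useful internal consistency check is that this uses each of the fourteen pre-Lie equations exactly once.

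The main obstacle is purely bookkeeping: tracking the antisymmetrizations and invoking the chain-map property of $\mu,\mu'$ so that the numerous $\rho\mu$- and $\mu\mu$-type cross terms cancel in pairs. This is most delicate in \eqref{5} and \eqref{6}, where three pre-Lie identities intervene and both the $A_0$- and $A_{-1}$-components of $\bar\mu$ appear together because of the mixed bracket $[x,h]=x\cdot h-h\cdot x$. Beyond this there is no conceptual difficulty; once the correspondence above is fixed, each equation follows by direct substitution and cancellation.
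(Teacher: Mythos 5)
Your proposal is correct and follows essentially the same route as the paper: the representation part via Proposition \ref{pro:rep-pre2-Lie2}, and then each Lie-compatibility equation obtained by expanding the bracket as a commutator and combining exactly the pre-Lie identities you list (the paper proves \eqref{1} from \eqref{33} and \eqref{1414}, \eqref{3} from \eqref{11} and \eqref{1212}, \eqref{5} from \eqref{55}, \eqref{88}, \eqref{1010}, and treats \eqref{2}, \eqref{4}, \eqref{6} by the symmetric primed pairings you identify). Your bookkeeping observation that each of the fourteen matched-pair equations is used exactly once is consistent with the paper's computation.
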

 \begin{proof}
 	Let $(\huaA, \huaA'; (\rho_0,\rho_1), (\mu_0,\mu_1), (\rho'_0,\rho'_1), (\mu'_0,\mu'_1))$ be a matched pair of strict pre-Lie $2$-algebras $\huaA$ and $\huaA'$. By \eqref{33} and \eqref{1414}, we have 
 	\begin{eqnarray*}
 	(\rho_0'-\mu_0')(x')[x,y]&=&(\rho_0'(x')x-\mu_0'(x')x)\cdot y+\rho_0'(\mu_0(x)x'-\rho_0(x)x')y+x\cdot(\rho_0'(x')y)\\
 	&&+\mu_0'(\mu_0(y)x')x-(\rho_0'(x')y-\mu_0'(x')y)\cdot x-\rho_0'(\mu_0(y)x'-\rho_0(y)x')x\\
 	&&-y\cdot(\rho_0'(x')x)-\mu_0'(\mu_0(x)x')y-x\cdot (\mu_0'(x')y)+y\cdot (\mu_0'(x')x)\\
 	&&-\mu_0'(\rho_0(y)x')x+\mu_0'(\rho_0(x)x')y\\
 	&=&[\rho_0'(x')x, y]-[\mu_0'(x')x, y]+[x, \rho_0'(x')y]-[x, \mu_0'(x')y]-\rho_0'((\rho_0-\mu_0)(x)x')y\\
 	&&+\rho_0'((\rho_0-\mu_0)(y)x')x-\mu_0'((\rho_0-\mu_0)(y)x')x+\mu_0'((\rho_0-\mu_0)(x)x')y\\
 	&=&[(\rho_0'-\mu_0')(x')x, y]+[x, (\rho_0'-\mu_0')(x')y]-(\rho_0'-\mu_0')((\rho_0-\mu_0)(x)x')y\\
 	&&+(\rho_0'-\mu_0')((\rho_0-\mu_0)(y)x')x,
 	\end{eqnarray*}
 which implies that \eqref{1} holds in Definition \ref{matched pair of strict Lie $2$-algebras.2}. 
 
 By \eqref{11} and \eqref{1212}, we have
 \begin{eqnarray*}
 (\rho_1'-\mu_1')(h')[x,y]&=&(\rho_1'(h')x-\mu_1'(h')x)\cdot y+\rho_1'(\mu_0(x)h'-\rho_0(x)h')y+x\cdot (\rho_1'(h')y)\\
 &&+\mu_1'(\mu_0(y)h')x-(\rho_1'(h')y-\mu_1'(h')y)\cdot x-\rho_1'(\mu_0(y)h'-\rho_0(y)h')x\\
 &&-y\cdot (\rho_1'(h')x)-\mu_1'(\mu_0(x)h')y-x\cdot (\mu_1'(h')y)+y\cdot (\mu_1'(h')x)\\
 &&-\mu_1'(\rho_0(y)h')x+\mu_1'(\rho_0(x)h')y\\
 &=&[\rho_1'(h')x, y]-[\mu_1'(h')x, y]+[x, \rho_1'(h')y]-[x, \mu_1'(h')y]-\rho_1'((\rho_0-\mu_0)(x)h')y\\
 &&+\rho_1'((\rho_0-\mu_0)(y)h')x-\mu_1'((\rho_0-\mu_0)(y)h')x+\mu_1'((\rho_0-\mu_0)(x)h')y\\
 &=&[(\rho_1'-\mu_1')(h')x, y]+[x, (\rho_1'-\mu_1')(h')y]-(\rho_1'-\mu_1')((\rho_0-\mu_0)(x)h')y\\
 &&+(\rho_1'-\mu_1')((\rho_0-\mu_0)(y)h')x,
 \end{eqnarray*}
  which implies that \eqref{3} holds in Definition \ref{matched pair of strict Lie $2$-algebras.2}. 
  
  By \eqref{55}, \eqref{88} and \eqref{1010}, we have
  \begin{eqnarray*}
  (\rho_0'-\mu_0')(x')[x,h]&=&(\rho_0'(x')x-\mu_0'(x')x)\cdot h+\rho_0'(\mu_0(x)x'-\rho_0(x)x')h+x\cdot (\rho_0'(x')h)\\
  &&+\mu_1'(\mu_1(h)x')x-(\rho_0'(x')h-\mu_0'(x')h)\cdot x-\rho_1'(\mu_1(h)x'-\rho_1(h)x')x\\
  &&-h\cdot (\rho_0'(x')x)-\mu_0'(\mu_0(x)x')h-x\cdot (\mu_0'(x')h)+h\cdot (\mu_0'(x')x)\\
  &&-\mu_1'(\rho_1(h)x')x+\mu_0'(\rho_0(x)x')h\\
  &=&[\rho_0'(x')x, h]-[\mu_0'(x')x, h]+[x, \rho_0'(x')h]-[x, \mu_0'(x')h]-\rho_0'((\rho_0-\mu_0)(x)x')h\\
  &&+\rho_1'((\rho_1-\mu_1)(h)x')x-\mu_1'((\rho_1-\mu_1)(h)x')x+\mu_0'((\rho_0-\mu_0)(x)x')h\\
  &=&[(\rho_0'-\mu_0')(x')x, h]+[x, (\rho_0'-\mu_0')(x')h]-(\rho_0'-\mu_0')((\rho_0-\mu_0)(x)x')h\\
  &&+(\rho_1'-\mu_1')((\rho_1-\mu_1)(h)x')x,
  \end{eqnarray*}
  which implies that \eqref{5} holds in Definition \ref{matched pair of strict Lie $2$-algebras.2}. 
  
  Similarly, we can get \eqref{2}, \eqref{4} and \eqref{6} of Definition \ref{matched pair of strict Lie $2$-algebras.2}. Then $(\huaG(\huaA),\huaG(\huaA');(\rho_0-\mu_0,\rho_1-\mu_1),(\rho'_0-\mu'_0,\rho'_1-\mu'_1))$ is a matched pair of strict Lie $2$-algebras.
 \end{proof}

\begin{thm}
Let $\huaA=(A_0, A_{-1}, \dM, \cdot)$ be a strict pre-Lie $2$-algebra. Suppose that there is another strict pre-Lie $2$-algebra structure on its dual space $\huaA^*$. Then $(\huaG(\huaA), \huaG^*(\huaA^*); (L_0^*, L_1^*), (\huaL_0^*, \huaL_1^*))$ is a matched pair of Lie $2$-algebras if and only if $(\huaA, \huaA^*; (\ad_0^*, \ad_1^*), (-R_0^*, -R_1^*), (\mathfrak{ad}_0^*, \mathfrak{ad}_1^*), (-\huaR_0^*, -\huaR_1^*))$ is a matched pair of pre-Lie $2$-algebras. 
\end{thm}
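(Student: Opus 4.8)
The plan is to extract both implications from the dictionary relating the coregular representations to their induced Lie $2$-algebra representations, disposing of the forward implication by the Proposition just above and reserving the real work for the converse. First I would record that, by \eqref{eq:l21} and \eqref{eq:l22}, the commutator of the sub-adjacent Lie $2$-algebra $\huaG(\huaA)$ is $\frkl_2 = L - R$, so that dualizing gives $\ad_0^* = L_0^* - R_0^*$ and $\ad_1^* = L_1^* - R_1^*$; the same computation in $\huaA^*$ gives $\mathfrak{ad}_0^* = \huaL_0^* - \huaR_0^*$ and $\mathfrak{ad}_1^* = \huaL_1^* - \huaR_1^*$. Hence the representation $(\rho,\mu) = ((\ad_0^*,\ad_1^*),(-R_0^*,-R_1^*))$ of $\huaA$ on $\huaA^*$ satisfies $\rho - \mu = (L_0^*,L_1^*)$, and the representation $(\rho',\mu') = ((\mathfrak{ad}_0^*,\mathfrak{ad}_1^*),(-\huaR_0^*,-\huaR_1^*))$ of $\huaA^*$ on $\huaA$ satisfies $\rho' - \mu' = (\huaL_0^*,\huaL_1^*)$; these are exactly the induced representations furnished by Proposition \ref{pro:rep-pre2-Lie2}. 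Granting this, the forward implication is immediate: if the stated data form a matched pair of strict pre-Lie $2$-algebras, then the Proposition established just above, applied with $\rho-\mu = (L_0^*,L_1^*)$ and $\rho'-\mu' = (\huaL_0^*,\huaL_1^*)$, shows that $(\huaG(\huaA),\huaG(\huaA^*); (L_0^*,L_1^*),(\huaL_0^*,\huaL_1^*))$ is a matched pair of strict Lie $2$-algebras.

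The substance of the theorem is the converse. Assuming the six Lie matched-pair identities \eqref{1}--\eqref{6} for $(\mu_0,\mu_1)=(L_0^*,L_1^*)$ and $(\mu_0',\mu_1')=(\huaL_0^*,\huaL_1^*)$, I would verify the fourteen pre-Lie identities \eqref{11}--\eqref{1414} by dualization, organized according to the three input-degree types that they govern (two $A_0$-inputs for \eqref{33}, \eqref{44}, \eqref{1313}, \eqref{1414}; one $A_0$- and one $A_{-1}$-input for \eqref{11}, \eqref{22}, \eqref{77}, \eqref{88}; and the remaining mixed variant). For each identity I would pair both sides against a test covector and rewrite every transposed operator through its defining adjunction, using $\langle L_0^*(x)\xi,y\rangle = -\langle \xi, x\cdot y\rangle$, $\langle R_0^*(x)\xi,y\rangle = -\langle \xi, y\cdot x\rangle$, their degree-shifted versions involving $L_1^*,R_1^*$, and their $\huaA^*$-counterparts, until the identity becomes a scalar relation among the pre-Lie products of $\huaA$ and $\huaA^*$. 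Substituting $\ad^* = L^*-R^*$ and $\mathfrak{ad}^* = \huaL^*-\huaR^*$ and then invoking the strict pre-Lie $2$-algebra axioms $(a_1)$--$(b_3)$ of Definition \ref{defi:2-term pre} for both $\huaA$ and $\huaA^*$ (to put the resulting associators into left-symmetric form) collapses each dualized identity to a consequence of \eqref{1}--\eqref{6}, so that all fourteen follow. A more conceptual alternative for the converse is to assemble the Lie $2$-algebra $\huaG(\huaA)\oplus\huaG(\huaA^*)$ from the given Lie matched pair via Theorem \ref{matched pair of strict Lie $2$-algebras}, to check that the standard form \eqref{omega} is a nondegenerate closed invariant $2$-form making it a special para-K\"ahler strict Lie $2$-algebra with $\huaA$ and $\huaA^*$ Lagrangian, and then to read off the matched pair of pre-Lie $2$-algebras via the Manin-triple description of Section \ref{sec:Manin triple}.

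The main obstacle is exactly this collapse. The fourteen pre-Lie conditions are strictly finer than the six Lie conditions---indeed the proof of the Proposition above produces each of \eqref{1}--\eqref{6} only as a combination of two or three of \eqref{11}--\eqref{1414}---so recovering them is not formal and depends essentially on the coregular shape $\rho=\ad^*=L^*-R^*$, $\mu=-R^*$: it is precisely the surplus $R^*$- and $\huaR^*$-terms generated by this shape that must cancel against the terms produced by the pre-Lie axioms. The delicate bookkeeping is to track the Koszul signs coming from the degree shift between $A_0$ and $A_{-1}$ when transposing the degree $-1$ operators $R_1^*$ and $\huaR_1^*$, and to confirm that after these cancellations nothing survives beyond the six Lie identities. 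Once a representative identity in each of the three degree types has been treated, the remaining cases follow by the same pattern.
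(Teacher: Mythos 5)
Your proposal is correct and is essentially the paper's own proof: the paper proves the theorem by exactly the dualization you describe, pairing each matched-pair identity against test (co)vectors and rewriting the transposed operators through their adjunctions together with $\ad^*=L^*-R^*$ and $\mathfrak{ad}^*=\huaL^*-\huaR^*$, thereby establishing the term-by-term equivalences \eqref{1}$\iff$\eqref{11}$\iff$\eqref{77}, $\ldots$, \eqref{6}$\iff$\eqref{66}$\iff$\eqref{1212}$\iff$\eqref{1414}. The only cosmetic differences are that the paper obtains both implications simultaneously from these equivalences (so your appeal to the preceding Proposition for the forward direction is a harmless shortcut), and that the collapse is pure adjunction bookkeeping, so the pre-Lie axioms $(a_1)$--$(b_3)$ you hold in reserve are never actually invoked.
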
 
\begin{proof}
For any $x, y ,z \in A_0$, $a, b, c \in A_{-1}$, $x^*, y^* ,z^* \in A_0^*$, $a^*, b^*, c^* \in A_{-1}^*$, we have
\begin{eqnarray*}
{\rm Eq.~}(\ref{1}) &\iff& {\rm Eq.~}(\ref{11}) \iff {\rm Eq.~}(\ref{77})\\
{\rm Eq.~}(\ref{2}) &\iff& {\rm Eq.~}(\ref{22}) \iff {\rm Eq.~}(\ref{88})\\
{\rm Eq.~}(\ref{3}) &\iff& {\rm Eq.~}(\ref{33}) \iff {\rm Eq.~}(\ref{99})\\
{\rm Eq.~}(\ref{4}) &\iff& {\rm Eq.~}(\ref{44}) \iff {\rm Eq.~}(\ref{1010})\\
{\rm Eq.~}(\ref{5}) &\iff& {\rm Eq.~}(\ref{55}) \iff {\rm Eq.~}(\ref{1111}) \iff {\rm Eq.~}(\ref{1313})\\
{\rm Eq.~}(\ref{6}) &\iff& {\rm Eq.~}(\ref{66}) \iff {\rm Eq.~}(\ref{1212}) \iff{\rm Eq.~}(\ref{1414}).
\end{eqnarray*}
As an example, we show how \eqref{1} is equivalent to \eqref{11} and \eqref{77}. In fact, it follows from
\begin{eqnarray*}
\left\langle \huaL_0^*(a^*)[x, y], z^*\right\rangle&=&\left\langle [x, y], -a^*\circ z^*\right\rangle=\left\langle \huaR_1^*(z^*)[x, y], a^*\right\rangle;\\
\left\langle [x, \huaL_0^*(a^*)y], z^*\right\rangle&=&\left\langle \huaL_0^*(a^*)y, -ad_0^*(x)z^*\right\rangle=\left\langle y, a^*\circ (ad_0^*(x)z^*)\right\rangle=\left\langle -\huaR_1^*(ad_0^*(x)z^*)y, a^*\right\rangle;\\
\left\langle [\huaL_0^*(a^*)x, y], z^*\right\rangle&=&\left\langle -[y, \huaL_0^*(a^*)x], z^*\right\rangle=\left\langle \huaL_0^*(a^*)x, ad_0^*(y)z^*\right\rangle=\left\langle x, -a^*\circ (ad_0^*(y)z^*)\right\rangle\\
&=&\left\langle \huaR_1^*(ad_0^*(y)z^*)x, a^*\right\rangle;\\
\left\langle \huaL_0^*(L_0^*(y)a^*)x, z^*\right\rangle&=&\left\langle x, -(L_0^*(y)a^*)\circ z^*\right\rangle=\left\langle \huaR_1^*(z^*)x, L_0^*(y)a^*\right\rangle=\left\langle -y\cdot(\huaR_1^*(z^*)x), a^*\right\rangle;\\
-\left\langle \huaL_0^*(L_0^*(x)a^*)y, z^*\right\rangle&=&\left\langle y, (L_0^*(x)a^*)\circ z^*\right\rangle=\left\langle \huaR_1^*(z^*)y, -L_0^*(x)a^*\right\rangle=\left\langle x\cdot(\huaR_1^*(z^*)y), a^*\right\rangle,
\end{eqnarray*}
which impliy that \eqref{1} $\iff$ \eqref{11}.

Furthermore, we can also get
\begin{eqnarray*}
	\left\langle \huaL_0^*(a^*)[x, y], z^*\right\rangle&=&\left\langle [x, y], -a^*\circ z^*\right\rangle=\left\langle y, ad_0^*(x)(a^*\circ z^*)\right\rangle;\\
	\left\langle [x, \huaL_0^*(a^*)y], z^*\right\rangle&=&\left\langle \huaL_0^*(a^*)y, -ad_0^*(x)z^*\right\rangle=\left\langle y, a^*\circ (ad_0^*(x)z^*)\right\rangle;\\
	\left\langle [\huaL_0^*(a^*)x, y], z^*\right\rangle&=&\left\langle y, -ad_0^*(\huaL_0^*(a^*)x)z^*\right\rangle=\left\langle y, -ad_0^*(\mathfrak{ad}_0^*(a^*)x+\huaR_0^*(a^*)x)z^*\right\rangle;\\
	\left\langle \huaL_0^*(L_0^*(y)a^*)x, z^*\right\rangle&=&\left\langle x, -(L_0^*(y)a^*)\circ z^*\right\rangle=\left\langle \huaR_1^*(z^*)x, L_0^*(y)a^*\right\rangle=\left\langle -y\cdot(\huaR_1^*(z^*)x), a^*\right\rangle\\
	&=&\left\langle y, R_1^*(\huaR_1^*(z^*)x)a^*\right\rangle;\\
	-\left\langle \huaL_0^*(L_0^*(x)a^*)y, z^*\right\rangle&=&\left\langle y, (L_0^*(x)a^*)\circ z^*\right\rangle=\left\langle y, (ad_0^*(x)a^*+R_0^*(x)a^*)\circ z^*\right\rangle,
\end{eqnarray*}
which implies that \eqref{1} $\iff$ \eqref{77}. And the proof of other equivalence is similar. The conclusion follows.
\end{proof}
 
\section{Strict pre-Lie 2-bialgebras and their constructions}\label{sec:Pre-Lie 2-bialgebras}
In this section, we introduce the notion of a strict pre-Lie 2-bialgebra and show that there is an one-to-one correspondence between  strict pre-Lie 2-bialgebras and matched pairs of strict pre-Lie 2-algebras. Then by the cohomology of strict Lie 2-algebras, we introduce the coboundary strict pre-Lie 2-bialgebra, which leads to the 2-graded classical Yang-Baxter Equations on strict pre-Lie 2-algebras. We give an operator form description of the 2-graded classical Yang-Baxter Equations. Finally, we use the $\huaO$-operators on strict Lie 2-algebras to construct solutions of 2-graded classical Yang-Baxter Equations on a bigger strict pre-Lie 2-algebra.
\subsection{Strict pre-Lie 2-bialgebras}
In order to give the definition of strict pre-Lie $2$-bialgebras, we first recall the coboundary operator associated to a certain tensor representation associated to the strict pre-Lie $2$-algebra.  Let $\huaA$ be a  strict pre-Lie $2$-algebra. Then $\huaG(\huaA)$ is strict Lie $2$-algebra and $(\huaA\otimes \huaA;(L_0\otimes 1+1\otimes \ad_0, L_1\otimes 1+1\otimes \ad_1))$ is a representation of $\huaG(\huaA)$. Then $\huaA$ acts on a $3$-term complex of vector spaces $\huaA\otimes\huaA$ with 
\begin{eqnarray*}
(\huaA\otimes\huaA)_{0}:=\huaA_{-1}\otimes \huaA_{-1}\stackrel{\dM^{\otimes}}{\longrightarrow}(\huaA\otimes\huaA)_1:=A_0\otimes A_{-1}\oplus A_{-1}\otimes A_0\stackrel{\dM^{\otimes}}{\longrightarrow}(\huaA\otimes\huaA)_2:=A_0\otimes A_0,
\end{eqnarray*}
where  $\dM^{\otimes}$ is given by 
\begin{eqnarray*}
\dM^{\otimes}(a\otimes b)&=&(\dM\otimes 1+1\otimes \dM)(a\otimes b)=\dM a\otimes b+a\otimes \dM b, \quad a,b\in A_{-1},\\
\dM^{\otimes}(x\otimes a+b\otimes y)&=&(\dM\otimes 1-1\otimes \dM)(x\otimes a+b\otimes y)=\dM a\otimes y-x\otimes \dM b, \quad x,y\in A_0, a,b\in A_{-1}.
\end{eqnarray*}
The Chevalley-Eilenberg complex is given by 
\begin{eqnarray*}
&&(\huaA\otimes\huaA)_{0}\stackrel{D}{\longrightarrow}(\huaA\otimes\huaA)_1\oplus \Hom(A_0, (\huaA\otimes\huaA)_0)\stackrel{D}{\longrightarrow}\\
&&(\huaA\otimes\huaA)_2\oplus \Hom(A_0, (\huaA\otimes\huaA)_1)\oplus \Hom(A_{-1}, (\huaA\otimes\huaA)_{0})\oplus \Hom(\land^2 A_0, (\huaA\otimes\huaA)_0)\stackrel{D}{\longrightarrow}\\
&&\Hom(A_0, (\huaA\otimes\huaA)_2)\oplus \Hom(A_{-1}, (\huaA\otimes\huaA)_1)\oplus \Hom(\land^2 A_0, (\huaA\otimes\huaA)_1)\oplus \Hom(\land^3 A_0, (\huaA\otimes\huaA)_0)\\
&&\oplus \Hom(A_0\otimes A_{-1}, (\huaA\otimes\huaA)_0)\stackrel{D}{\longrightarrow}\cdots.
\end{eqnarray*}
The differential operator $D=\hat{\dM}+\bar{\dM}+\hat{\dM^{\otimes}}$, in which $\bar{\dM}$ is the operator associated to the tensor representation $(\ad_0\otimes 1+1\otimes L_0, \ad_1\otimes 1+1\otimes L_1)$ of $\huaA$ on $\huaA\otimes\huaA$. For a $1$-cochain $(\alpha_0, \alpha_1) \in \Hom(A_0, (\huaA\otimes\huaA)_1)\oplus \Hom(A_{-1}, (\huaA\otimes\huaA)_0)$, we have 
$$D(\alpha_0, \alpha_1)=-\dM^{\otimes}\circ\alpha_0+\bar{\dM}\alpha_0-\alpha_0\circ \dM+\dM^{\otimes}\circ\alpha_1+\bar{\dM}\alpha_1.$$
This implies that $(\alpha_0, \alpha_1)$ is a $1$-cocycle if and only if the following equations hold:
\begin{eqnarray}
(\dM\otimes 1-1\otimes \dM)\circ\alpha_0&=&0, \quad \alpha_0\circ \dM-(\dM\otimes 1+1\otimes \dM)\circ \alpha_1=0,\\
\bar{\dM}\alpha_0(x, y)&=&0, \quad \bar{\dM}\alpha_0(x, h)+\bar{\dM}\alpha_1(x, h)=0.
\end{eqnarray}

\begin{defi}\label{a strict pre-Lie 2-bialgebra}
A {\bf strict pre-Lie 2-bialgebra} $(\huaA,\huaA^*)$ consists of the following data:
\begin{itemize}
    \item[{\rm (i)}] $\huaA=(A_0,A_{-1},\dM,\cdot)$ is a strict pre-Lie $2$-algebra, which induces linear maps $\beta_1:A_0^*:\longrightarrow A_0^* \otimes A_0^*$ and $\beta_{0}:A_{-1}^*:\longrightarrow A_0^* \otimes A_{-1}^*\oplus A_{-1}^*\otimes A_0^*$ given by
    \begin{eqnarray*}
   \langle\beta_1(x^*),x\otimes y\rangle&=&\langle x^*,x\cdot y\rangle,\quad \langle\beta_{0}(a^*),x\otimes a\rangle=\langle a^*,x\cdot a\rangle,\\ 
    \langle\beta_{0}(a^*),a\otimes x\rangle&=&\langle a^*,a\cdot x\rangle,\quad x,y\in A_0,x^*\in A^*_0,a\in A_{-1},a^*\in A^*_{-1};
    \end{eqnarray*}
     \item[{\rm (ii)}] $\huaA^*=(A^*_{-1},A^*_{0},\dM^*,\ast)$ is a strict pre-Lie $2$-algebra, which induces linear maps $\alpha_{1}:A_{-1}:\longrightarrow A_{-1} \otimes A_{-1}$ and $\alpha_{0}:A_{0}:\longrightarrow A_{-1}\otimes A_{0}\oplus A_{0}\otimes A_{-1}$ given by
    \begin{eqnarray*}
   \langle\alpha_{1}(a),b^*\otimes c^*\rangle&=&\langle a,b^*\ast c^*\rangle,\quad \langle\alpha_{0}(x),a^*\otimes x^*\rangle=\langle x,a^*\ast x^*\rangle,\\ 
    \langle\alpha_{0}(x),x^*\otimes a^*\rangle&=&\langle x,x^*\ast a^*\rangle,\quad x\in A_0,x^*\in A^*_0,a\in A_{-1},a^*,b^*,c^*\in A^*_{-1};
    \end{eqnarray*}
     \item[{\rm (iii)}]$(\alpha_0, \alpha_1)$ is a $1$-cocycle of $\huaG(\huaA)$ associated to the representation $(L_0\otimes 1+1\otimes \ad_0, L_1\otimes 1+1\otimes \ad_1)$ with values in $\huaA\otimes\huaA$;
     \item[{\rm (iv)}]$(\beta_0, \beta_1)$ is a $1$-cocycle of $\huaG(\huaA^*)$ associated to the representation  $(\huaL_0\otimes 1+1\otimes \mathfrak{ad}_0, \huaL_1\otimes 1+1\otimes \mathfrak{ad}_1)$ with values in $\huaA^*\otimes\huaA^*$.
\end{itemize}
\end{defi}

\begin{pro}
  Let $\huaA=(A_0, A_{-1}, \dM, \cdot)$ be a strict pre-Lie $2$-algebra and $\huaA^*=(A^*_{-1},A^*_{0},\dM^*,\ast)$ be a strict pre-Lie $2$-algebra on the dual space of $\huaA$. And $\alpha_0, \alpha_1, \beta_0, \beta_1$ are given by Definition \ref{a strict pre-Lie 2-bialgebra}. Then $(\huaG(\huaA), \huaG(\huaA^*); (L_0^*, L_1^*),(\huaL_0^*, \huaL_1^*))$ is a matched pair of strict Lie $2$-algebras if and only if $(\huaA, \huaA^*)$ is a strict pre-Lie $2$-bialgebra. 
\end{pro}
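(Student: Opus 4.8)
The plan is to prove the equivalence by translating, under the canonical pairing between $\huaA$ and $\huaA^*$, the six compatibility equations \eqref{1}--\eqref{6} of a matched pair of strict Lie $2$-algebras into the two $1$-cocycle conditions (iii) and (iv) of Definition \ref{a strict pre-Lie 2-bialgebra}. The representations $(L_0^*, L_1^*)$ and $(\huaL_0^*, \huaL_1^*)$ are by construction dual to the left-multiplication representations of $\huaA$ and $\huaA^*$, while $(\alpha_0,\alpha_1)$ and $(\beta_0,\beta_1)$ are by construction dual to the multiplications $\ast$ and $\cdot$; hence each matched-pair identity, paired against an arbitrary test element of the relevant dual space, turns into a scalar identity that is precisely a component of a cocycle condition. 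This is the same mechanism already used in the proof of the preceding theorem to establish \eqref{1}$\iff$\eqref{11}$\iff$\eqref{77}, and I would reuse that bookkeeping.

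First I would dispose of the differential part. Since $\huaA$ and $\huaA^*$ are already strict pre-Lie $2$-algebras, their axioms $(a_1)$--$(a_3)$ dualize exactly to the chain-map lines of the $1$-cocycle conditions, namely $(\dM\otimes 1-1\otimes\dM)\circ\alpha_0=0$, $\alpha_0\circ\dM-(\dM\otimes 1+1\otimes\dM)\circ\alpha_1=0$ and their $\beta$-analogues; these therefore hold automatically and carry no content. The substance lies in the $\bar{\dM}$-parts $\bar{\dM}\alpha_0(x,y)=0$ and $\bar{\dM}\alpha_0(x,h)+\bar{\dM}\alpha_1(x,h)=0$ (together with the $\beta$-versions). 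Pairing $\bar{\dM}\alpha_0(x,y)=0$ against $\xi\otimes\eta\in\huaA^*\otimes\huaA^*$ and using $\langle\alpha_1(a),b^*\otimes c^*\rangle=\langle a,b^*\ast c^*\rangle$ together with $\langle L_0^*(x)\xi,u\rangle=-\langle\xi,L_0(x)u\rangle$ reproduces, degree by degree, equations \eqref{1}, \eqref{3} and \eqref{5}---precisely the matched-pair equations in which $(\huaL_0^*,\huaL_1^*)$ acts on a bracket of $\huaG(\huaA)$. By the symmetry of the entire construction under $\huaA\leftrightarrow\huaA^*$, $\alpha\leftrightarrow\beta$, which interchanges (iii) with (iv) and the primed with the unprimed representations, the cocycle condition (iv) for $(\beta_0,\beta_1)$ reproduces the remaining equations \eqref{2}, \eqref{4} and \eqref{6}. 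This delivers both implications simultaneously. Alternatively, one may route through the preceding theorem, which already collapses \eqref{1}--\eqref{6} onto the pre-Lie matched-pair equations \eqref{11}--\eqref{1414}, and then match those to (iii) and (iv).

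The main obstacle is organizational rather than conceptual: the two-term graded tensor space $\huaA\otimes\huaA$ splits $(\huaA\otimes\huaA)_1$ into the summands $A_0\otimes A_{-1}$ and $A_{-1}\otimes A_0$, and each of $x,y,h$ may lie in degree $0$ or degree $-1$, so a single cocycle equation unfolds into the separate identities \eqref{1}, \eqref{3}, \eqref{5}. Keeping the signs consistent is the delicate point: one must simultaneously track the Koszul sign $(-1)^{|v|+1}$ in the tensor differential $\dM^{\otimes}$ of \eqref{eq:tensor}, the sign in the dual-representation convention $\langle\rho^*(x)\xi,v\rangle=-\langle\xi,\rho(x)v\rangle$, and the degree shift built into $\huaA^*=(A_{-1}^*,A_0^*,\dM^*,\ast)$, in which degree $0$ of $\huaA^*$ pairs with degree $-1$ of $\huaA$. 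Once a fixed pairing convention is adopted and the three summands $\hat{\dM}$, $\bar{\dM}$, $\hat{\dM^{\otimes}}$ of $D$ are matched to the appropriate term in each equation, every identity falls out as a routine, if lengthy, dualization, exactly paralleling the explicit computations displayed in the proof of the preceding theorem.
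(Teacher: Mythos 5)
Your proposal is correct and follows essentially the same route as the paper: the chain-map parts of the cocycle conditions are shown to hold automatically from axioms $(a_1)$--$(a_3)$ of $\huaA^*$, the $\bar{\dM}$-parts are dualized term by term against test elements to yield \eqref{1}, \eqref{3}, \eqref{5}, and the $\beta$-side gives \eqref{2}, \eqref{4}, \eqref{6} by the same computation with the roles of $\huaA$ and $\huaA^*$ exchanged. One small slip in wording: equation \eqref{5} arises from the mixed condition $\bar{\dM}\alpha_0(x,h)+\bar{\dM}\alpha_1(x,h)=0$, not from $\bar{\dM}\alpha_0(x,y)=0$ alone, but since you had already listed both $\bar{\dM}$-conditions as the substance of the argument, this does not affect the proof.
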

\begin{proof}
There is a $3$-term complex of vector spaces $$A_{-1}\otimes A_{-1}\stackrel{\partial}{\longrightarrow}A_{-1}\otimes A_0\oplus A_0\otimes A_{-1}\stackrel{\partial}{\longrightarrow}A_0\otimes A_0.$$
If we want to prove that $(\alpha_0, \alpha_1)$ is a $1$-cocycle, that is $D(\alpha_0, \alpha_1)=0$, where 
\begin{eqnarray*}
D(\alpha_0, \alpha_1)=(\hat{\dM}+\bar{\dM}+\hat{\partial})(\alpha_0, \alpha_1)=-\alpha_0\circ \dM+\bar{\dM}\alpha_0+\bar{\dM}\alpha_1-\partial\circ\alpha_0+\partial\circ\alpha_1=0.
\end{eqnarray*}
This implies that $(\alpha_0, \alpha_1)$ is a $1$-cocycle if and only if the following equations hold:
\begin{itemize}
\item[\rm(i)] \quad $-(\dM\otimes 1-1\otimes \dM)\circ\alpha_0(x)=0$;
\item[\rm(ii)] \quad $-\alpha_0\circ \dM(a)+(\dM\otimes 1+1\otimes \dM)\circ\alpha_1(a)=0$;
\item[\rm(iii)] \quad $\bar{\dM}\alpha_0(x, y)=0$;
\item[\rm(iv)] \quad $\bar{\dM}\alpha_0(x, a)+\bar{\dM}\alpha_1(x, a)=0$.
\end{itemize}

Because the strict pre-Lie $2$-algebra structure on $\huaA^*$ is given by linear maps $\alpha_1^*:A_{-1}^*\otimes A_{-1}^* \longrightarrow A_{-1}^*$, $\alpha_0^*:A_{-1}^*\otimes A_0^*\oplus A_0^*\otimes A_{-1}^* \longrightarrow A_0^*$, we can get  
\begin{eqnarray*}
\left\langle (\dM\otimes 1-1\otimes \dM)\circ\alpha_0(x), y^*\otimes z^*\right\rangle&=&\left\langle \alpha_0(x), (\dM^*\otimes 1-1\otimes \dM^*)(y^*\otimes z^*)\right\rangle\\
&=&\left\langle x, \alpha_0^*((\dM^*y^*)\otimes z^*-y^*\otimes (\dM^*z^*))\right\rangle\\
&=&\left\langle x, (\dM^*y^*)\circ z^*-y^*\circ (\dM^*z^*)\right\rangle\\
&=&0,
\end{eqnarray*}
which implies that $(\dM\otimes 1-1\otimes \dM)\circ\alpha_0(x)=0$.
\begin{eqnarray*}
\left\langle -\alpha_0\circ \dM(a), b^*\otimes x^*+y^*\otimes c^*\right\rangle&=&\left\langle \dM(a), -\alpha_0^*(b^*\otimes x^*+y^*\otimes c^*)\right\rangle\\
&=&\left\langle \dM(a), -(b^*\circ x^*+y^*\circ c^*)\right\rangle\\
&=&\left\langle a, -\dM^*(b^*\circ x^*+y^*\circ c^*)\right\rangle\\
&=&\left\langle a, -(b^*\circ (\dM^*x^*)+(\dM^*y^*)\circ c^*)\right\rangle,
\end{eqnarray*}
\begin{eqnarray*}
\left\langle (\dM\otimes 1+1\otimes \dM)\circ\alpha_1(a), b^*\otimes x^*+y^*\otimes c^*\right\rangle&=&\left\langle \alpha_1(a), (\dM^*\otimes 1+1\otimes \dM^*)(b^*\otimes x^*+y^*\otimes c^*)\right\rangle\\
&=&\left\langle \alpha_1(a), (\dM^*y^*)\otimes c^*+b^*\otimes (\dM^*x^*)\right\rangle\\
&=&\left\langle a, \alpha_1^*((\dM^*y^*)\otimes c^*+b^*\otimes (\dM^*x^*))\right\rangle\\
&=&\left\langle a, ((\dM^*y^*)\circ c^*+b^*\circ (\dM^*x^*))\right\rangle.
\end{eqnarray*}
which implies that $-\alpha_0\circ \dM(a)+(\dM\otimes 1+1\otimes \dM)\circ\alpha_1(a)=0$.
So we can know that equation {\rm(i)} and equation {\rm(ii)} naturally hold.

In the case, equation {\rm(iii)} is 
$$\bar{\dM}\alpha_0(x, y)=(L_0\otimes 1+1\otimes ad_0)(x)\alpha_0(y)-(L_0\otimes 1+1\otimes ad_0)(y)\alpha_0(x)-\alpha_0([x, y]).$$ 
By calculation, we have 
\begin{eqnarray*}
\left\langle \huaL_0^*(a^*)[x, y], z^*\right\rangle&=&\left\langle [x, y], -a^*\circ z^*\right\rangle=\left\langle [x, y], -\alpha_0^*(a^*\otimes z^*)\right\rangle=\left\langle -\alpha_0([x, y]), a^*\otimes z^*\right\rangle,\\
-\left\langle [x, \huaL_0^*(a^*)y], z^*\right\rangle&=&-\left\langle \huaL_0^*(a^*)y, -\ad_0^*(x)z^*\right\rangle=-\left\langle y, a^*\circ(\ad_0^*(x)z^*)\right\rangle\\
&=&-\left\langle y, \alpha_0^*(a^*\otimes(\ad_0^*(x)z^*))\right\rangle=-\left\langle \alpha_0(y), a^*\otimes(\ad_0^*(x)z^*)\right\rangle\\
&=&-\left\langle \alpha_0(y), (1\otimes \ad_0^*(x))(a^*\otimes z^*)\right\rangle=\left\langle (1\otimes \ad_0(x))\alpha_0(y), a^*\otimes z^*\right\rangle,\\
-\left\langle[\huaL_0^*(a^*)x, y], z^*\right\rangle&=&\left\langle [y, \huaL_0^*(a^*)x], z^*\right\rangle=\left\langle \huaL_0^*(a^*)x, -\ad_0^*(y)z^*\right\rangle\\
&=&\left\langle x, a^*\circ(\ad_0^*(y)z^*)\right\rangle=\left\langle x, \alpha_0^*(a^*\otimes(\ad_0^*(y)z^*))\right\rangle\\
&=&\left\langle \alpha_0(x), (1\otimes \ad_0^*(y))(a^*\otimes z^*)\right\rangle=\left\langle -(1\otimes \ad_0(y))\alpha_0(x), (a^*\otimes z^*)\right\rangle,\\
-\left\langle \huaL_0^*(L_0^*(y)a^*)x, z^*\right\rangle&=&\left\langle x, (L_0^*(y)a^*)\circ z^*\right\rangle=\left\langle x, \alpha_0^*((L_0^*(y)a^*)\otimes z^*)\right\rangle\\
&=&\left\langle \alpha_0(x), (L_0^*(y)a^*)\otimes z^*\right\rangle=\left\langle \alpha_0(x), (L_0^*(y)\otimes 1)(a^*\otimes z^*)\right\rangle\\
&=&\left\langle -(L_0(y)\otimes 1)\alpha_0(x), a^*\otimes z^*\right\rangle,\\
\left\langle \huaL_0^*(L_0^*(x)a^*)y, z^*\right\rangle&=&\left\langle y, -(L_0^*(x)a^*)\circ z^*\right\rangle=\left\langle y, -\alpha_0^*((L_0^*(x)a^*)\otimes z^*)\right\rangle\\
&=&\left\langle \alpha_0(y), -(L_0^*(x)a^*)\otimes z^*\right\rangle=\left\langle \alpha_0(y), -(L_0^*(x)\otimes 1)(a^*\otimes z^*)\right\rangle\\
&=&\left\langle (L_0(x)\otimes 1)\alpha_0(y), a^*\otimes z^*\right\rangle,\\
\end{eqnarray*}
which implies that 
\begin{eqnarray*}
&&\left\langle \huaL_0^*(a^*)[x, y]-[x, \huaL_0^*(a^*)y]-[\huaL_0^*(a^*)x, y]-\huaL_0^*(L_0^*(y)a^*)x+\huaL_0^*(L_0^*(x)a^*)y, z^*\right\rangle\\
&=&\left\langle (L_0\otimes 1+1\otimes \ad_0)(x)\alpha_0(y)-(L_0\otimes 1+1\otimes \ad_0)(y)\alpha_0(x)-\alpha_0([x, y]), a^*\otimes z^*\right\rangle.
\end{eqnarray*}
So we can get the equation (iii) is equal to \eqref{1}.

Furthermore, we also have 
\begin{eqnarray*}
\left\langle \huaL_1^*(z^*)[x, y], a^*\right\rangle&=&\left\langle [x, y], -z^*\circ a^*\right\rangle=\left\langle [x, y], -\alpha_0^*(z^*\otimes a^*)\right\rangle=\left\langle -\alpha_0([x, y]), z^*\otimes a^*\right\rangle\\
-\left\langle [x, \huaL_1^*(z^*)y], a^*\right\rangle&=&\left\langle  \huaL_1^*(z^*)y, \ad_0^*(x)a^*\right\rangle=\left\langle y, -z^*\circ (\ad_0^*(x)a^*)\right\rangle\\
&=&\left\langle y, -\alpha_0^*(z^*\otimes (\ad_0^*(x)a^*))\right\rangle=\left\langle \alpha_0(y), -(z^*\otimes (\ad_0^*(x)a^*))\right\rangle\\
&=&\left\langle \alpha_0(y), -(1\otimes \ad_0^*(x))(z^*\otimes a^*)\right\rangle=\left\langle (1\otimes \ad_0(x))\alpha_0(y), z^*\otimes a^*\right\rangle\\
-\left\langle [\huaL_1^*(z^*)x, y], a^*\right\rangle&=&\left\langle [y, \huaL_1^*(z^*)x], a^*\right\rangle=\left\langle \huaL_1^*(z^*)x, -\ad_0^*(y)a^*\right\rangle\\
&=&\left\langle x, z^*\circ (\ad_0^*(y)a^*)\right\rangle=\left\langle x, \alpha_0^*(z^*\otimes (\ad_0^*(y)a^*))\right\rangle\\
&=&\left\langle \alpha_0(x), (1\otimes \ad_0^*(y))(z^*\otimes a^*)\right\rangle=\left\langle -(1\otimes \ad_0(y))\alpha_0(x), z^*\otimes a^*\right\rangle\\
-\left\langle \huaL_1^*(L_0^*(y)z^*)x, a^*\right\rangle&=&\left\langle x, (L_0^*(y)z^*)\circ a^*\right\rangle=\left\langle x, \alpha_0^*((L_0^*(y)z^*)\otimes a^*)\right\rangle\\
&=&\left\langle \alpha_0(x), (L_0^*(y)\otimes 1)(z^*\otimes a^*)\right\rangle=\left\langle -(L_0(y)\otimes 1)\alpha_0(x), z^*\otimes a^*\right\rangle\\
\left\langle \huaL_1^*(L_0^*(x)z^*)y, a^*\right\rangle&=&\left\langle y, -(L_0^*(x)z^*)\circ a^*\right\rangle=\left\langle y, -\alpha_0^*((L_0^*(x)z^*)\otimes a^*)\right\rangle\\
&=&\left\langle \alpha_0(y), -(L_0^*(x)\otimes 1)(z^*\otimes a^*)\right\rangle=\left\langle (L_0(x)\otimes 1)\alpha_0(y), z^*\otimes a^*\right\rangle,
\end{eqnarray*}
which implies that 
\begin{eqnarray*}
	&&\left\langle \huaL_1^*(z^*)[x, y]-[x, \huaL_1^*(z^*)y]-[\huaL_1^*(z^*)x, y]-\huaL_1^*(L_0^*(y)z^*)x+\huaL_1^*(L_0^*(x)z^*)y, a^*\right\rangle\\
	&=&\left\langle (L_0\otimes 1+1\otimes \ad_0)(x)\alpha_0(y)-(L_0\otimes 1+1\otimes \ad_0)(y)\alpha_0(x)-\alpha_0([x, y]), z^*\otimes a^*\right\rangle.
\end{eqnarray*}
So we can get the equation {\rm(iii)} is also equal to \eqref{3}.

Finally, equation {\rm(iv)} is 
$$\bar{\dM}\alpha_0(x, a)+\bar{\dM}\alpha_1(x, a)=-(L_1\otimes 1+1\otimes \ad_1)(a)\alpha_0(x)+(L_0\otimes 1+1\otimes \ad_0)(x)\alpha_1(a)-\alpha_1([x, a]).$$
We have
\begin{eqnarray*}
\left\langle \huaL_0^*(a^*)[x,c], b^*\right\rangle&=&\left\langle [x,c], -a^*\circ b^*\right\rangle=\left\langle [x,c], -\alpha_1^*(a^*\otimes b^*)\right\rangle=\left\langle -\alpha_1([x,c]), a^*\otimes b^*\right\rangle,\\
-\left\langle [x, \huaL_0^*(a^*)c], b^*\right\rangle&=&\left\langle \huaL_0^*(a^*)c, \ad_0^*(x)b^*\right\rangle=\left\langle c, -a^*\circ (\ad_0^*(x)b^*)\right\rangle\\
&=&\left\langle c, -\alpha_1^*(a^*\otimes (\ad_0^*(x)b^*))\right\rangle=\left\langle \alpha_1(c), -(1\otimes \ad_0^*(x))(a^*\otimes b^*)\right\rangle\\
&=&\left\langle (1\otimes \ad_0(x))\alpha_1(c), a^*\otimes b^*\right\rangle,\\
-\left\langle [\huaL_0^*(a^*)x, c], b^*\right\rangle&=&\left\langle [c, \huaL_0^*(a^*)x], b^*\right\rangle=\left\langle \huaL_0^*(a^*)x, -\ad_1^*(c)b^*\right\rangle\\
&=&\left\langle x, a^*\circ (\ad_1^*(c)b^*)\right\rangle=\left\langle x, \alpha_0^*(a^*\otimes (\ad_1^*(c)b^*))\right\rangle\\
&=&\left\langle \alpha_0(x), (1\otimes \ad_1^*(c))(a^*\otimes b^*)\right\rangle=\left\langle -(1\otimes \ad_1(c))\alpha_0(x), a^*\otimes b^*\right\rangle,\\
-\left\langle \huaL_1^*(L_1^*(c)a^*)x, b^*\right\rangle&=&\left\langle x, (L_1^*(c)a^*)\circ b^*\right\rangle=\left\langle x, \alpha_0^*((L_1^*(c)a^*)\otimes b^*)\right\rangle\\
&=&\left\langle \alpha_0(x), (L_1^*(c)\otimes 1)(a^*\otimes b^*)\right\rangle=\left\langle -(L_1(c)\otimes 1)\alpha_0(x), a^*\otimes b^*\right\rangle,\\
\left\langle \huaL_0^*(L_0^*(x)a^*)c, b^*\right\rangle&=&\left\langle c, -(L_0^*(x)a^*)\circ b^*\right\rangle=\left\langle c, -\alpha_1^*((L_0^*(x)a^*)\otimes b^*)\right\rangle\\
&=&\left\langle \alpha_1(c), -(L_0^*(x)\otimes 1)(a^*\otimes b^*)\right\rangle=\left\langle (L_0(x)\otimes 1)\alpha_1(c), a^*\otimes b^*\right\rangle.
\end{eqnarray*}
which implies that 
\begin{eqnarray*}
	&&\left\langle \huaL_0^*(a^*)[x,c]-[x, \huaL_0^*(a^*)c]-[\huaL_0^*(a^*)x, c]-\huaL_1^*(L_1^*(c)a^*)x+\huaL_0^*(L_0^*(x)a^*)c, b^*\right\rangle\\
	&=&\left\langle -(L_1\otimes 1+1\otimes \ad_1)(a)\alpha_0(x)+(L_0\otimes 1+1\otimes \ad_0)(x)\alpha_1(a)-\alpha_1([x, a]), a^*\otimes b^*\right\rangle.
\end{eqnarray*}
So we can get the equation {\rm(iv)} is equal to \eqref{5}. By the above calculation, we can get that $D(\alpha_0, \alpha_1)=0$ if and only if \eqref{1}, \eqref{3} and \eqref{5} of Definition \ref{matched pair of strict Lie $2$-algebras.2} are satisfied. 

Similarly, we can get that $D(\beta_0, \beta_1)=0$ if and only if \eqref{2}, \eqref{4} and \eqref{6} of Definition \ref{matched pair of strict Lie $2$-algebras.2} are satisfied. Then $(\huaG(\huaA), \huaG(\huaA^*); (L_0^*, L_1^*),(\huaL_0^*, \huaL_1^*))$ is a matched pair of strict Lie $2$-algebras if and only if $(\huaA, \huaA^*)$ is a strict pre-Lie $2$-bialgebra.  
\end{proof}

\begin{thm}\label{pf-3}
Let $\huaA=(A_0, A_{-1}, \dM, \cdot)$ be a strict pre-Lie $2$-algebra and $\huaA^*=(A^*_{-1},A^*_{0},\dM^*,\ast)$ be a strict pre-Lie $2$-algebra on its dual space $\huaA^*$. Then the following conditions are equivalent:
\begin{itemize}
	\item[\rm($1$)] $((\huaG(\huaA)\Join\huaG(\huaA^*), \huaG(\huaA), \huaG(\huaA^*), \omega)$ is a para-K\"ahler strict Lie $2$-algebra, where $\omega$ is given by \eqref{omega}; 
	\item[\rm($2$)] $(\huaG(\huaA), \huaG(\huaA^*); (L_0^*, L_1^*),(\huaL_0^*, \huaL_1^*))$ is a matched pair of strict Lie $2$-algebras; 
	\item[\rm($3$)] $(\huaA, \huaA^*; (\ad_0^*, \ad_1^*), (-R_0^*, -R_1^*), (\mathfrak{ad}_0^*, \mathfrak{ad}_1^*), (-\huaR_0^*, -\huaR_1^*))$ is a matched pair of strict pre-Lie $2$-algebras; 
	\item[\rm($4$)] $(\huaA, \huaA^*)$ is a strict pre-Lie $2$-bialgebra.
\end{itemize}
\end{thm}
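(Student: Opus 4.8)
The plan is to close a cycle of equivalences $(4)\Leftrightarrow(2)\Leftrightarrow(3)\Leftrightarrow(1)$, relying entirely on the three correspondences already established rather than on any fresh computation. The links $(4)\Leftrightarrow(2)$ and $(3)\Leftrightarrow(2)$ require nothing new: the first is exactly the content of the preceding Proposition, which shows that the Lie $2$-algebra matched pair $(\huaG(\huaA),\huaG(\huaA^*);(L_0^*,L_1^*),(\huaL_0^*,\huaL_1^*))$ exists precisely when $(\huaA,\huaA^*)$ is a strict pre-Lie $2$-bialgebra; the second is exactly the preceding Theorem, which shows that this same Lie $2$-algebra matched pair exists precisely when $(\huaA,\huaA^*;(\ad_0^*,\ad_1^*),(-R_0^*,-R_1^*),(\mathfrak{ad}_0^*,\mathfrak{ad}_1^*),(-\huaR_0^*,-\huaR_1^*))$ is a matched pair of strict pre-Lie $2$-algebras. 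So I would simply cite these two results for those arrows.

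The substance is therefore the equivalence $(3)\Leftrightarrow(1)$, which I would route through a Manin triple. First I would note that when the coregular representations appearing in $(3)$ are substituted into the three matched-pair operations of the matched-pair theorem for strict pre-Lie $2$-algebras, the resulting multiplication $\ast$ on $A_0\oplus A_0^*$, on $A_0\oplus A_{-1}^*$, and on $A_{-1}\oplus A_0^*$ agrees term-by-term with the standard operation \eqref{eq:std 1}--\eqref{eq:std 3}. Hence $(3)$ holds if and only if \eqref{eq:std 1}--\eqref{eq:std 3} endow $\huaA\oplus\huaA^*$ with a strict pre-Lie $2$-algebra structure, which is precisely the assertion that $(\huaA\oplus\huaA^*;\huaA,\huaA^*)$ is the standard Manin triple of strict pre-Lie $2$-algebras with respect to the bilinear form $\omega$ of \eqref{omega}; the sub-pre-Lie $2$-algebras $\huaA$ and $\huaA^*$ are isotropic and $\omega$ is invariant by the very way $\ast$ was constructed.

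Next I would invoke the Manin triple / special para-K\"ahler correspondence proved earlier, which turns this standard Manin triple into a special para-K\"ahler strict Lie $2$-algebra $((\huaA\oplus\huaA^*)^c,\omega)$ with the commutator Lie $2$-algebras $\huaG(\huaA)$ and $\huaG(\huaA^*)$ as its two Lagrangian pieces, and conversely. The key bookkeeping step -- which I expect to be the main obstacle -- is to identify the commutator strict Lie $2$-algebra $(\huaA\oplus\huaA^*)^c$ with the bicrossed product $\huaG(\huaA)\Join\huaG(\huaA^*)$ produced from the Lie $2$-algebra matched pair of $(2)$ via Theorem \ref{matched pair of strict Lie $2$-algebras}. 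Concretely this amounts to verifying that the commutators of the coregular pre-Lie actions $\ad^*$ and $-R^*$ on the $\huaA^*$-factor (respectively $\mathfrak{ad}^*$ and $-\huaR^*$ on the $\huaA$-factor) collapse to the coadjoint actions $L^*$ and $\huaL^*$ that assemble the bicrossed bracket \eqref{eq:Lie2mp-bracket1}--\eqref{eq:Lie2mp-bracket2}. I would also remark that $\omega$ in \eqref{omega} pairs degree $0$ components only against degree $-1$ ones, so its $\g_0\wedge\g_0$ part vanishes and the structure is automatically \emph{special}, matching the hypothesis of $(1)$. Chaining these identifications gives $(1)\Leftrightarrow(3)$ and closes the cycle.
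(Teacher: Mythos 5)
Your proposal is correct and coincides with the proof the paper intends (the paper states Theorem \ref{pf-3} with no proof, as a summary of what precedes): your three links --- the preceding Proposition for $(2)\Leftrightarrow(4)$, the preceding Theorem for $(2)\Leftrightarrow(3)$, and the passage $(3)\Leftrightarrow$ standard Manin triple $\Leftrightarrow$ special para-K\"ahler structure on $(\huaA\oplus\huaA^*)^c$, together with the collapse $\ad^*-(-R^*)=L^*$, $\mathfrak{ad}^*-(-\huaR^*)=\huaL^*$ identifying $(\huaA\oplus\huaA^*)^c$ with $\huaG(\huaA)\Join\huaG(\huaA^*)$ --- are exactly the results the paper relies on. Your remark that $\omega$ of \eqref{omega} forces $\omega_1=0$, so the para-K\"ahler structure in $(1)$ is automatically special and the converse half of the Manin-triple theorem applies, correctly supplies the one point the paper leaves entirely implicit.
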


 \subsection{Coboundary strict pre-Lie 2-bialgebras}
 For any 1-cochain $(r, \Phi) \in(A_0\otimes A_{-1} \oplus A_{-1}\otimes A_0)\oplus \Hom(A_0, A_{-1}\otimes A_{-1})$, we have: 
\begin{eqnarray*}
D(r, \Phi)&&=(\hat{\dM}+\bar{\dM}+\hat{\dM^\otimes})(r, \Phi)\\
&&=\hat{\dM^\otimes}r+\bar{\dM} r+\hat{\dM^\otimes}\Phi+\hat{\dM}\Phi+\bar{\dM}\Phi\\
&&=\dM^\otimes r+\bar{\dM} r-\dM^\otimes\circ \Phi-\Phi\circ \dM+\bar{\dM}\Phi.
\end{eqnarray*}
 Therefore, if $(\alpha_0, \alpha_1)=D(r, \Phi)$ for some 1-cochain $(r, \Phi)$, we must have
 \begin{eqnarray}
 \hat{\dM^\otimes}r&=&\dM^\otimes r=(\dM\otimes 1-1\otimes \dM)r=0,\\
 \bar{\dM}\Phi&=&0,\\
 \alpha_0(x)&=&\bar{\dM} r(x)+\hat{\dM^\otimes}\Phi(x)=(L_0\otimes 1+1\otimes \ad_0)r(x)-\dM^\otimes\circ \Phi(x),\\
 \alpha_1(a)&=&\bar{\dM} r(a)+\hat{\dM}\Phi(a)=(L_1\otimes 1+1\otimes \ad_1)r(a)-\Phi(\dM a).
 \end{eqnarray}
 
 Since we require $\bar{\dM}\Phi=0$, we choose $\Phi=\bar{\dM}\tau$ for some $\tau \in A_{-1}\otimes A_{-1}$.
 \begin{pro}\label{pf-1}
 If $\Phi=\bar{\dM}\tau$ for some $\tau \in A_{-1}\otimes A_{-1}$, then we have
 \begin{eqnarray}
 \label{2-coboundary-1}\alpha_0(x)&=&(L_0\otimes 1+1\otimes \ad_0)(r-\dM^\otimes\tau)(x),\\
 \label{2-coboundary-2}\alpha_1(a)&=&(L_1\otimes 1+1\otimes \ad_1)(r-\dM^\otimes\tau)(a).
 \end{eqnarray}
  \end{pro}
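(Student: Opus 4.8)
The plan is to substitute the specific choice $\Phi=\bar{\dM}\tau$ into the two coboundary expressions derived immediately above the proposition, namely
\[
\alpha_0(x)=(L_0\otimes 1+1\otimes\ad_0)(x)\,r-\dM^{\otimes}\big(\Phi(x)\big),\qquad
\alpha_1(a)=(L_1\otimes 1+1\otimes\ad_1)(a)\,r-\Phi(\dM a),
\]
and then to recognize each result as the action of the tensor representation on $r-\dM^{\otimes}\tau$. First I would make $\Phi$ explicit. Since $\tau\in(\huaA\otimes\huaA)_0=A_{-1}\otimes A_{-1}$ is a $0$-cochain, the only nonzero contribution of $\bar{\dM}$ to it comes through $\bar{\frkd}^{(1,0)}$, giving $\Phi(x)=\bar{\dM}\tau(x)=(L_0\otimes 1+1\otimes\ad_0)(x)\,\tau$ for all $x\in A_0$; the $\bar{\frkd}^{(0,1)}$-part vanishes because it would land in $(\huaA\otimes\huaA)_{-1}=0$.

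For the degree-$0$ identity \eqref{2-coboundary-1}, I would use that $(\huaA\otimes\huaA;(L_0\otimes 1+1\otimes\ad_0,\,L_1\otimes 1+1\otimes\ad_1))$ is a \emph{strict} representation of $\huaG(\huaA)$; in particular each $(L_0\otimes 1+1\otimes\ad_0)(x)$ lies in $\End^0_{\dM^{\otimes}}(\huaA\otimes\huaA)$ and hence commutes with $\dM^{\otimes}$. Thus
\[
\dM^{\otimes}\big(\Phi(x)\big)=\dM^{\otimes}\big((L_0\otimes 1+1\otimes\ad_0)(x)\tau\big)=(L_0\otimes 1+1\otimes\ad_0)(x)\big(\dM^{\otimes}\tau\big),
\]
and substituting this into the formula for $\alpha_0(x)$ and factoring out $(L_0\otimes 1+1\otimes\ad_0)(x)$ yields exactly \eqref{2-coboundary-1}.

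For the degree-$1$ identity \eqref{2-coboundary-2}, the key input is the chain-map relation coming from condition (i) of Definition \ref{strict homomorphism} applied to the representation $(L_0\otimes 1+1\otimes\ad_0,\,L_1\otimes 1+1\otimes\ad_1)$, namely
\[
(L_0\otimes 1+1\otimes\ad_0)(\dM a)=(L_1\otimes 1+1\otimes\ad_1)(a)\circ\dM^{\otimes}+\dM^{\otimes}\circ(L_1\otimes 1+1\otimes\ad_1)(a).
\]
Evaluating on $\tau$ and observing that $(L_1\otimes 1+1\otimes\ad_1)(a)$ lowers the complex degree and therefore annihilates $\tau\in(\huaA\otimes\huaA)_0$, I obtain $\Phi(\dM a)=(L_0\otimes 1+1\otimes\ad_0)(\dM a)\tau=(L_1\otimes 1+1\otimes\ad_1)(a)(\dM^{\otimes}\tau)$. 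Substituting into the formula for $\alpha_1(a)$ and factoring then gives \eqref{2-coboundary-2}.

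The only genuine subtlety—and the step I expect to need the most care—is this last identity, since it relies simultaneously on the chain-map relation linking the two components of the representation and on the degree-vanishing of $(L_1\otimes 1+1\otimes\ad_1)(a)$ on $\tau$; the analogous computation for $\alpha_0$ is easier because it uses only the commutation of $\dM^{\otimes}$ with a single degree-$0$ operator. Once these two structural facts are in place, everything else is linearity, so I would keep the exposition short and simply point to the strict-representation properties of $(\huaA\otimes\huaA;(L_0\otimes 1+1\otimes\ad_0,\,L_1\otimes 1+1\otimes\ad_1))$ rather than re-deriving them.
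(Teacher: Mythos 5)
Your proposal is correct and follows essentially the same route as the paper: substitute $\Phi=\bar{\dM}\tau$ into the two coboundary formulas and pull the representation operators past $\dM^\otimes$ so that $\alpha_0(x)$ and $\alpha_1(a)$ factor as the action on $r-\dM^\otimes\tau$. The only difference is in how the two key commutation identities are justified: the paper extracts $\dM^\otimes\circ\bar{\dM}\tau=\bar{\dM}\circ\dM^\otimes\tau$ and $\bar{\dM}\tau(\dM a)=(L_1\otimes 1+1\otimes\ad_1)(a)(\dM^\otimes\tau)$ from $D^2=0$, whereas you obtain them directly from the strict-representation axioms (commutation of $(L_0\otimes 1+1\otimes\ad_0)(x)$ with $\dM^\otimes$, and the chain-map relation $\rho_0\circ\dM=\delta\circ\rho_1$ together with $\rho_1(a)\tau=0$ for degree reasons) --- equivalent justifications, since $D^2=0$ encodes precisely the strictness of the representation.
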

 \begin{proof}
 By $D^2=0$, we have $\hat{\dM^\otimes}\circ \bar{\dM}\tau+ \bar{\dM}\circ \hat{\dM^\otimes}\tau=0$. And $\hat{\dM^\otimes}\circ \bar{\dM}\tau=-\dM^\otimes\circ \bar{\dM}\tau$, $\bar{\dM}\circ \hat{\dM^\otimes}\tau=\bar{\dM}[(-1)^0\dM^\otimes\circ\tau]=\bar{\dM}\circ \dM^\otimes\tau$, which implies that $\bar{\dM}\circ \dM^\otimes\tau=\dM^\otimes\circ \bar{\dM}\tau$. Thus, we have 
 \begin{eqnarray*}
 \alpha_0(x)&=&\bar{\dM} r(x)+\hat{\dM^\otimes}\Phi(x)=\bar{\dM} r(x)-\dM^\otimes\circ\Phi(x)\\
 &=&\bar{\dM} r(x)-\dM^\otimes\circ \bar{\dM}\tau(x)=\bar{\dM} r(x)-\bar{\dM}\circ \dM^\otimes\tau(x)\\
 &=&\bar{\dM}(r-\dM^\otimes\tau)(x)=(L_0\otimes 1+1\otimes \ad_0)(r-\dM^\otimes\tau)(x).
 \end{eqnarray*}
 
 Also by $D^2=0$, we have $\hat{\dM}(\bar{\dM}\tau)+\bar{\dM}(\hat{\dM^\otimes}\tau)=0$. And $\hat{\dM}(\bar{\dM}\tau)(a)=-\bar{\dM}\tau(\dM a)$, $\bar{\dM}(\hat{\dM^\otimes}\tau)(a)=\bar{\dM}(\dM^\otimes\circ\tau)(a)$, which implies that $\bar{\dM}(\dM^\otimes\circ\tau)(a)=\bar{\dM}\tau(\dM a)$. Thus, we have
 \begin{eqnarray*}
 \alpha_1(a)&=&\bar{\dM} r(a)+\hat{\dM}\Phi(a)=\bar{\dM} r(a)-\Phi(\dM a)\\
 &=&\bar{\dM} r(a)-\bar{\dM}\tau(\dM a)=\bar{\dM} r(a)-\bar{\dM}(\dM^\otimes\circ\tau)(a)\\
 &=&\bar{\dM}(r-\dM^\otimes\tau)(a)=(L_1\otimes 1+1\otimes \ad_1)(r-\dM^\otimes\tau)(a).
 \end{eqnarray*}
 \end{proof}
 
 Let $A$ be a pre-Lie algebra and $r=\displaystyle\sum_{i}a_i\otimes b_i$ and we set 
 \begin{eqnarray*}
 &&r_{12}=\displaystyle\sum_{i}a_i\otimes b_i\otimes1; \quad r_{21}=\displaystyle\sum_{i}b_i\otimes a_i\otimes1;\\
 &&r_{13}=\displaystyle\sum_{i}a_i\otimes1\otimes b_i; \quad r_{23}=\displaystyle\sum_{i}1\otimes a_i\otimes b_i \in U(\huaG(A)),
 \end{eqnarray*}
 where $U(\huaG(A))$ is the universal enveloping algebra of the sub-adjacent Lie algebra $\huaG(A)$.
 
 \begin{defi}{\rm (\cite{Left-symmetric bialgebras})}
 A {\bf pre-Lie bialgebra} $(A, A^*, \alpha, \beta)$ is called coboundary if $\alpha$ is a 1-coboundary of $\huaG(A)$ associated to $L\otimes1+1\otimes \ad$, that is, there exists a $r\in A\otimes A$ such that
 \begin{eqnarray}\label{coboundary}
 \alpha(x)=(L_x\otimes 1+1\otimes \ad_x)r, \quad \forall~x\in A.
 \end{eqnarray}
 \end{defi}
 
 \begin{thm}{\rm (\cite{Left-symmetric bialgebras})}\label{pf-2}
 Let $A$ be a pre-Lie algebra and $r\in A\otimes A$. Then the map $\alpha$ defined by \eqref{coboundary} induces a pre-Lie algebra structure on $A^*$ such that $(A, A^*)$ is a pre-Lie bialgebra if and only if the following two conditions are satisfied:
\begin{itemize}
	\item[\rm(a)]$ [P(x\cdot y)-P(x)P(y)](r_{12}-r_{21})=0$ for any $x, y \in A$;
 \item[\rm(b)] $ Q(x)[[r, r]]=0,$
 \end{itemize}
  where $[[r, r]]$ is given by 
 $[[r, r]]=r_{13}\cdot r_{12}-r_{23}\cdot r_{21}+[r_{23}, r_{12}]-[r_{13}, r_{21}]-[r_{13}, r_{23}],$
 and $Q(x)=L_x\otimes1\otimes1+1\otimes L_x\otimes1+1\otimes1\otimes \ad_x$, $P(x)=L_x\otimes1+1\otimes L_x$ for any $x \in A$.
 \end{thm}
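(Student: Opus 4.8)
The plan is to peel off the clauses in the definition of a (coboundary) pre-Lie bialgebra and convert each into an equation in $r$. By the matched-pair description of pre-Lie bialgebras, $(A,A^*)$ is a pre-Lie bialgebra exactly when three things hold: (i) $\alpha$ is a $1$-cocycle of $\huaG(A)$ with respect to $L\otimes 1+1\otimes\ad$; (ii) the dual map $\alpha^*:A^*\otimes A^*\to A^*$ defines a left-symmetric (pre-Lie) multiplication $\circ$ on $A^*$; and (iii) the comultiplication $\beta$ dual to the product of $A$ is a $1$-cocycle of $\huaG(A^*)$ with respect to the corresponding coregular representation. Clause (i) is automatic here: since $\alpha=D(r)$ is a coboundary and $D^2=0$, we get $D\alpha=0$. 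So the real work is to show that (ii) and (iii) together are equivalent to conditions (a) and (b).

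First I would dualize $\alpha$. Writing $r=\sum_i a_i\otimes b_i$ and using \eqref{coboundary}, the product $\circ$ on $A^*$ is determined by $\langle \xi\circ\eta,\ x\rangle=\langle (L_x\otimes 1+1\otimes\ad_x)r,\ \xi\otimes\eta\rangle$ for all $x\in A$ and $\xi,\eta\in A^*$; expanding the pairing rewrites $\circ$ through $L^*$, $R^*$ and $\ad^*$ acting on the legs of $r$. Then I would compute the associator $(\xi,\eta,\zeta)_\circ=(\xi\circ\eta)\circ\zeta-\xi\circ(\eta\circ\zeta)$, pair it with an arbitrary $x\in A$, and form the left-symmetry defect $(\xi,\eta,\zeta)_\circ-(\eta,\xi,\zeta)_\circ$. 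Using the pre-Lie identity of $A$ in the form $L_{x\cdot y}-L_xL_y=L_{y\cdot x}-L_yL_x$, the contributions linear in a single leg of $r$ collapse to $[P(x\cdot y)-P(x)P(y)](r_{12}-r_{21})$ --- the skew combination $r_{12}-r_{21}$ surviving precisely because the symmetric part of $r$ drops out under the swap of the first two arguments --- which is exactly condition (a); the contributions quadratic in $r$ organize into $Q$-images of the element $[[r,r]]$.

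Next I would treat clause (iii). Since the product on $A$ is fixed, the dual cochain $\beta$ is determined, and the requirement that it be a $1$-cocycle of $\huaG(A^*)$ becomes, after substituting $\alpha=D(r)$ and repeatedly using the defining relations of the coregular representation together with the pre-Lie axiom, an identity of the shape $Q(x)[[r,r]]=0$ for all $x\in A$. Here $Q(x)=L_x\otimes 1\otimes 1+1\otimes L_x\otimes 1+1\otimes 1\otimes\ad_x$ is exactly the representation of $\huaG(A)$ on $A^{\otimes 3}$ controlling the cocycle condition, and $[[r,r]]=r_{13}\cdot r_{12}-r_{23}\cdot r_{21}+[r_{23},r_{12}]-[r_{13},r_{21}]-[r_{13},r_{23}]$ is the resulting obstruction, so this is condition (b). Assembling the clauses: $\circ$ is pre-Lie and $\beta$ is a cocycle iff both (a) and (b) hold, which is the asserted equivalence.

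The hard part will be the index bookkeeping behind the two identifications, especially for $[[r,r]]$: one must track which tensor slot carries $L$ and which carries $\ad$, organize the large number of terms produced by expanding $(\xi\circ\eta)\circ\zeta$ and its partner, disentangle cleanly which of them feed the left-symmetry defect (a) and which feed the cocycle defect (b), and recognize $[[r,r]]$ as the genuine pre-Lie analogue of the element $[r,r]$ in the classical Yang--Baxter equation. Once the operators $P(x)$ and $Q(x)$ are viewed as the representation maps on $A^{\otimes 2}$ and $A^{\otimes 3}$, the computation is long but mechanical, and conditions (a) and (b) emerge as the vanishing of these two defects.
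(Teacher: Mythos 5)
First, a point of reference: the paper does not prove this theorem at all --- it is quoted from \cite{Left-symmetric bialgebras}, so your proposal can only be compared against Bai's original argument. Your overall strategy is indeed that argument's skeleton: for a coboundary $\alpha=D(r)$ the cocycle condition on $\alpha$ is automatic, and the two remaining bialgebra axioms --- (ii) left-symmetry of the induced product $\circ=\alpha^*$ on $A^*$, and (iii) the requirement that $\beta$ (the dual of the product of $A$) be a $1$-cocycle of $\huaG(A^*)$ --- must each be converted into a condition on $r$.

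However, you match these two axioms to conditions (a) and (b) the wrong way around, and this is a genuine error rather than a labeling slip. The product $\circ$ on $A^*$ is linear in $r$, so the left-symmetry defect $(\xi,\eta,\zeta)_\circ-(\eta,\xi,\zeta)_\circ$ is homogeneous of degree \emph{two} in $r$: every term carries two legs of $r$, one from each application of $\circ$. It therefore has no ``contributions linear in a single leg of $r$'' and cannot collapse to the degree-one condition (a); what it yields, after pairing with $x\in A$, is exactly $Q(x)[[r,r]]=0$, i.e.\ condition (b). Conversely, in the cocycle identity for $\beta$ relative to $\huaG(A^*)$, the map $\beta$ is independent of $r$ while the bracket on $A^*$ and the operators $\mathcal{L}_\xi$, $\mathfrak{ad}_\xi$ are linear in $r$; that identity is homogeneous of degree \emph{one} in $r$, so it can only produce condition (a), never the quadratic condition (b). The tensor shapes confirm the same diagnosis: condition (a) is indexed by pairs $(x,y)$ and lives in $A\otimes A$, matching the dualized cocycle condition for $\beta$ (which is indexed by pairs $(\xi,\eta)$), whereas condition (b) is indexed by a single $x$ and lives in $A^{\otimes 3}$, matching the dualized left-symmetry defect (indexed by triples $(\xi,\eta,\zeta)$). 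So the correct correspondence --- the one Bai actually proves --- is: condition (a) $\Leftrightarrow$ $\beta$ is a $1$-cocycle of $\huaG(A^*)$ (the compatibility/matched-pair axiom), and condition (b) $\Leftrightarrow$ $\circ$ is left-symmetric (the pre-Lie analogue of co-Jacobi giving the Yang--Baxter-type equation). With that swap corrected, your outline becomes the standard proof; as written, both of your two main computations are aimed at identities they cannot produce.
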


  \begin{thm}\label{pre-Lie 2-bialgebra}
 Let $(A_0, A_{-1}, \dM, \cdot)$ be a strict pre-Lie $2$-algebra, $r\in A_0\otimes A_{-1}\oplus A_{-1}\otimes A_0$ and $\tau\in A_{-1}\otimes A_{-1}$. Let $R=r-\dM^\otimes\tau=r-(\dM\otimes1+1\otimes \dM)\tau\in A_0\otimes A_{-1}\oplus A_{-1}\otimes A_0$. Then the maps $\alpha_0$ and $\alpha_1$ defined by \eqref{2-coboundary-1} and \eqref{2-coboundary-2} induce a pre-Lie $2$-algebra structure on $A^*$ such that $(A, A^*)$ is a pre-Lie 2-bialgebra if and only if for any $x+a, y+b\in A_0\oplus A_{-1}$, the following conditions are satisfied:
 \begin{itemize}
	\item[\rm(a)]$ [P((x+a)\ast (y+b))-P(x+a)P(y+b)](R-\sigma(R))=0$;
 \item[\rm(b)]$ Q(x+a)[[R, R]]_s=0$;
 \item[\rm(c)]$ \dM^\otimes r=0,$
 \end{itemize}
 where $\sigma:A_0\otimes A_{-1}\oplus A_{-1}\otimes A_0 \longrightarrow A_{-1}\otimes A_0\oplus A_0\otimes A_{-1}$ is the exchanging operator and $[[\cdot, \cdot]]_s$ is defined on the semi-product pre-Lie algebra $A_{0}\ltimes  A_{-1}$ given by \eqref{eq:semi-product}.
 \end{thm}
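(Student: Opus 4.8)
The plan is to reduce the statement to the classical coboundary pre-Lie bialgebra theorem (Theorem \ref{pf-2}) applied to the semidirect product pre-Lie algebra $\mathfrak{A} = A_0 \ltimes A_{-1}$ of Proposition \ref{pro:semi-product}, and then to account separately for the extra grading and differential compatibilities that distinguish a strict pre-Lie $2$-bialgebra from an ordinary pre-Lie bialgebra.

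First I would set up the reduction. By Proposition \ref{pro:semi-product}, the strict pre-Lie $2$-algebra $\huaA$ gives rise to the semidirect product pre-Lie algebra $\mathfrak{A} = A_0 \ltimes A_{-1}$ on the vector space $A_0 \oplus A_{-1}$ with product given by \eqref{eq:semi-product}. Under this identification the element $R = r - \dM^\otimes\tau \in A_0 \otimes A_{-1} \oplus A_{-1} \otimes A_0$ becomes an element of $\mathfrak{A} \otimes \mathfrak{A}$, and by Proposition \ref{pf-1} the pair $(\alpha_0, \alpha_1)$ of \eqref{2-coboundary-1}--\eqref{2-coboundary-2} is precisely the single coboundary cobracket $\alpha(u) = (L_u \otimes 1 + 1 \otimes \ad_u)R$ on the sub-adjacent Lie algebra $\huaG(\mathfrak{A})$, split according to whether $u = x \in A_0$ (recovering $\alpha_0$, since there $L_u = L_0(x)$ and $\ad_u = \ad_0(x)$) or $u = a \in A_{-1}$ (recovering $\alpha_1$, since there $L_u = L_1(a)$ and $\ad_u = \ad_1(a)$). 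I would verify that the dual pre-Lie product $\ast$ on $\huaA^* = (A_{-1}^*, A_0^*, \dM^*, \ast)$ induced by $(\alpha_0, \alpha_1)$ coincides, as an ungraded operation on $\mathfrak{A}^* = A_0^* \oplus A_{-1}^*$, with the dual product obtained from $R$ via the classical construction \eqref{coboundary}, keeping track of the fact that duality flips the internal degree ($A_0 \leftrightarrow A_{-1}^*$ and $A_{-1} \leftrightarrow A_0^*$).

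Second, I would apply Theorem \ref{pf-2} to $\mathfrak{A}$ and $R$: the map $\alpha$ makes $\mathfrak{A}^*$ a pre-Lie algebra with $(\mathfrak{A}, \mathfrak{A}^*)$ a pre-Lie bialgebra if and only if conditions (a) and (b) hold, where $P$ and $Q$ are formed from the operators $L, \ad$ of $\mathfrak{A} = A_0 \ltimes A_{-1}$ and $[[\cdot,\cdot]]_s$ is the associated bracket on the semidirect product. Writing $x+a$, $y+b$ for general elements of $\mathfrak{A}$ reproduces verbatim conditions (a) and (b) of the statement. This already yields, at the level of the underlying ungraded objects, both that $\ast$ is a pre-Lie product on $A^*$ and that the ungraded cocycle identities hold.

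Finally, I would upgrade the ungraded conclusion to the graded one. The choice $\Phi = \bar{\dM}\tau$ guarantees $\bar{\dM}\Phi = 0$ since $\bar{\dM}^2 = 0$, so by Proposition \ref{pf-1} the differential-compatibility parts of the $1$-cocycle condition, namely $(\dM\otimes 1 - 1\otimes\dM)\circ\alpha_0 = 0$ and $\alpha_0\circ\dM = (\dM\otimes 1 + 1\otimes\dM)\circ\alpha_1$, hold automatically, exactly as in the proof that $(\huaA,\huaA^*)$ is a bialgebra precisely when the corresponding matched pair exists. The remaining ingredient is condition (c), $\dM^\otimes r = 0$: this is exactly the equation $\hat{\dM^\otimes}r = \dM^\otimes r = (\dM\otimes 1 - 1\otimes\dM)r = 0$ required for $(\alpha_0,\alpha_1)$ to be a genuine coboundary $D(r,\Phi)$ whose components land in the correct graded pieces $(\huaA\otimes\huaA)_1$ and $(\huaA\otimes\huaA)_0$; equivalently, it forces $\dM^*$ to be a chain map for $\ast$, so that $\huaA^*$ is a strict pre-Lie $2$-algebra and not merely an ungraded pre-Lie algebra. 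Combining the three, conditions (a), (b), (c) are jointly equivalent to $(\huaA, \huaA^*)$ being a strict pre-Lie $2$-bialgebra as in Definition \ref{a strict pre-Lie 2-bialgebra}. The hard part will be the grading bookkeeping: one must check that conditions (a), (b), after the substitution $x \mapsto x+a$, $y \mapsto y+b$, decompose along the internal degree into exactly the component identities demanded by Definition \ref{a strict pre-Lie 2-bialgebra}, with no surplus constraints beyond (c), and that the degree flip under duality matches the semidirect product structure on $\huaA^*$ consistently.
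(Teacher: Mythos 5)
Your proposal is correct and follows essentially the same route as the paper's own (much terser) proof: both reduce to Bai's coboundary theorem (Theorem \ref{pf-2}) applied to the semidirect product pre-Lie algebra of Proposition \ref{pro:semi-product}, invoke Proposition \ref{pf-1} to identify $(\alpha_0,\alpha_1)$ with the coboundary cobracket and make the cocycle conditions automatic, and isolate condition (c) as the grading/chain-map compatibility that upgrades the ungraded bialgebra statement to the strict pre-Lie $2$-bialgebra one. Your write-up in fact supplies the degree bookkeeping that the paper leaves implicit.
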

 \begin{proof}
Since $(\alpha_0, \alpha_1)=D(r, \Phi)$ is an exact cocycle, we only need to show that $(\alpha_0,\alpha_1)$ induces a strict pre-Lie $2$-algebra structure on $A^\ast_{-1}\oplus A^\ast_0$. By the definition of $(\alpha_0,\alpha_1)$, the later is equivalent to that $A^\ast_{-1}\ltimes A^\ast_0$ is a semi-product pre-Lie algebra.  Then the conclusion follows from Proposition \ref{pf-1} and Theorem \ref{pf-2}.
 \end{proof}
 
 \begin{defi}
 Let $\huaA=(A_0, A_{-1}, \dM, \cdot)$ be a strict pre-Lie $2$-algebra, $r\in A_0\otimes A_{-1}\oplus A_{-1}\otimes A_0$ and $\tau\in A_{-1}\otimes A_{-1}$. Let $R=r-\dM^\otimes\tau=r-(\dM\otimes1+1\otimes \dM)\tau\in A_0\otimes A_{-1}\oplus A_{-1}\otimes A_0$. Then the equations
  \begin{itemize}
 	\item[\rm(a)]$ R-\sigma(R)=0$;
 	\item[\rm(b)]$ [[R, R]]_s=0$;
 	\item[\rm(c)]$ \dM^\otimes r=0,$
 \end{itemize} 
   are called the {\bf $2$-graded classical Yang-Baxter Equations} (2-graded CYBEs) in the strict pre-Lie $2$-algebra $\huaA$.
 \end{defi}
 Let $\huaA=(A_0, A_{-1}, \dM, \cdot)$ be a strict pre-Lie $2$-algebra. If $R$ is a solution of  2-graded CYBEs, then the maps $\alpha_0$ and $\alpha_1$ defined by \eqref{2-coboundary-1} and \eqref{2-coboundary-2} induce a pre-Lie $2$-algebra structure on $A^*$ such that $(A, A^*)$ is a pre-Lie 2-bialgebra.

 \emptycomment{The simplest way to satisfy the three conditions of Theorem \ref{pre-Lie 2-bialgebra} is to assume that $R$ is symmetric, $[[R, R]]=0$ and $\dM^\otimes r=0$.

 We need to introduce the definition of $S$-equation in pre-Lie algebra $A$.
 \begin{defi}
 	Let $A$ be a pre-Lie algebra and $r\in A\otimes A$. Suppose $r$ is symmetric. Then 
 	$$[[r, r]]=-r_{12}\cdot r_{13}+r_{12}\cdot r_{23}+[r_{13}, r_{23}]=0$$ is called $S$-equation in $A$.
 \end{defi}

 \begin{defi}
 	Let $\huaA=(A_0, A_{-1}, \dM, \cdot)$ be a strict pre-Lie $2$-algebra, $r\in A_0\otimes A_{-1}\oplus A_{-1}\otimes A_0$ and $\tau\in A_{-1}\otimes A_{-1}$. Let $R=r-\dM^\otimes\tau=r-(\dM\otimes1+1\otimes \dM)\tau\in A_0\otimes A_{-1}\oplus A_{-1}\otimes A_0$, and $R$ is symmetric. The $S$-equation for $R$ in pre-Lie algebra $(A_0\oplus A_{-1}, \ast)$ together with $(\dM\otimes1-1\otimes \dM)R=0$ are called $S$-equation in the strict pre-Lie $2$-algebra $\huaA$, where $\ast$ is the pre-Lie algebra structure given by the following.
 \end{defi}

 	Let $\huaA=(A_0, A_{-1}, \dM, \cdot)$ be a strict pre-Lie $2$-algebra. Then we can get that $((A_0, \cdot), (A_{-1}, \circ), \dM, (\rho, \mu))$ is a crossed module of pre-Lie algebra, where $\circ$ in $A_{-1}$ is defined by $a\circ b=(\dM a)\cdot b=a\cdot(\dM b)$, for any $a, b \in A_{-1}$, $\rho, \mu:A_0 \longrightarrow gl(A_{-1})$ is given by 
 	$$\rho(x)a=x\cdot a, \mu(x)a=a\cdot x, \quad x\in A_0.$$
 	Then there is a pre-Lie algebra structure $\ast$ in $A_0\oplus A_{-1}$, and $\ast$ is given by
 	$$(x+a)\ast(y+b)=x\cdot y+\rho(x)b+\mu(y)a+a\circ b.$$}

Let $\huaA=A_0 \oplus A_{-1}$ be a graded vector spaces. For any $R\in A_0\otimes A_{-1}\oplus A_{-1}\otimes A_0$, define $R_0:\g_{-1}^\ast\rightarrow \g_0$ and $R_1:\g_{0}^\ast\rightarrow \g_{-1}$ by
\begin{equation}
	\left\langle x^*\otimes a^*+b^*\otimes y^*, R\right\rangle=\left\langle x^*\otimes a^*, R\right\rangle+\left\langle b^*\otimes y^*, R\right\rangle=\left\langle x^*, R_0(a^*)\right\rangle+\left\langle b^*, R_1(y^*)\right\rangle,
\end{equation}
for any $x^*, y^* \in A_0^*$, $a^*, b^* \in A_{-1}^*$.

 \begin{thm}
 	Let $\huaA=(A_0, A_{-1}, \dM, \cdot)$ be a strict pre-Lie $2$-algebra and $R\in A_0\otimes A_{-1}\oplus A_{-1}\otimes A_0$. If $R=\sigma(R)$, then $R$ is a solution of $2$-graded classical Yang-Baxter Equations in the strict pre-Lie $2$-algebra $\huaA$ if and only if $(R_0,R_1)$ is an  $\mathcal{O}$-operator on the subadjacent Lie 2-algebra $\huaG(\huaA)$ associated to the coregular representation  $(\huaA^\ast;L_0^\ast,L_1^\ast)$. 
 \end{thm}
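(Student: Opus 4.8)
The plan is to exploit that the hypothesis $R=\sigma(R)$ makes the first 2-graded CYBE, condition (a), automatic, so that $R$ solves the 2-graded CYBEs exactly when the remaining two equations hold: (b) $[[R,R]]_s=0$ and (c) $\dM^\otimes r=0$. I will match (c) with the chain-map requirement and (b) with the two bracket identities (i)--(ii) of the $\mathcal{O}$-operator Definition~\ref{defi:O-operator}. The organising idea is that $[[\cdot,\cdot]]_s$ lives on the \emph{ordinary} (grading-forgetting) semidirect product pre-Lie algebra $\huaP:=A_0\ltimes_{(\rho,\mu)}A_{-1}$ of Proposition~\ref{pro:semi-product}, so I may first invoke the operator form of the $S$-equation for ordinary pre-Lie algebras and then split everything according to the internal grading of $\huaP$.

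First I would record the dictionary between the two pictures. Writing the symmetric tensor as $R=\sum_i(x_i\otimes a_i+a_i\otimes x_i)$ with $x_i\in A_0$, $a_i\in A_{-1}$, the defining pairing gives $R_0(a^*)=\sum_i\langle a^*,a_i\rangle x_i$ and $R_1(y^*)=\sum_i\langle y^*,x_i\rangle a_i$, so that $(R_0,R_1)$ is precisely the graded decomposition of the single induced map $\frkR:\huaP^*\to\huaP$ attached to $R$, with $\frkR|_{A_{-1}^*}=R_0$ and $\frkR|_{A_0^*}=R_1$. Likewise, using $\rho(x)b=x\cdot b$, $\mu(y)a=a\cdot y$ and $A_{-1}\ast A_{-1}=0$, the left multiplication $L_\ast$ of $\huaP$ preserves the grading and its restrictions recover $L_0$ and $L_1$; dualising, $L_\ast^*$ restricts on $\huaP^*$ to the components $L_0^*$ and $L_1^*$ of the dual of the strict representation $(L_0,L_1)$ of $\huaG(\huaA)$ on $\huaA$.

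For (c) I would pair $\dM^\otimes r=\sum_i(\dM a_i\otimes x_i-x_i\otimes\dM a_i)\in A_0\otimes A_0$ against an arbitrary $y^*\otimes z^*\in A_0^*\otimes A_0^*$; vanishing of all such pairings is equivalent to $\sum_i\langle y^*,\dM a_i\rangle x_i=\sum_i\langle y^*,x_i\rangle\dM a_i$ for all $y^*$, i.e. $R_0\circ\dM^*=\dM\circ R_1$, which is exactly the chain-map condition for $(R_0,R_1)$. For (b) I would apply the operator form of the $S$-equation on $\huaP$ (\cite{Left-symmetric bialgebras}): for symmetric $R$, the equation $[[R,R]]_s=0$ holds iff $\frkR$ is an $\mathcal{O}$-operator of $\huaG(\huaP)$ with respect to $(\huaP^*,L_\ast^*)$, that is $\frkR\bigl(L_\ast^*(\frkR\xi)\eta-L_\ast^*(\frkR\eta)\xi\bigr)=[\frkR\xi,\frkR\eta]_{\huaG(\huaP)}$ for all $\xi,\eta\in\huaP^*$. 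Specialising this single identity to the three homogeneous cases $\xi,\eta\in A_{-1}^*$; then $\xi\in A_0^*,\ \eta\in A_{-1}^*$; then $\xi,\eta\in A_0^*$, and feeding in the restrictions above together with $A_{-1}\ast A_{-1}=0$, the first case reproduces (i), the second reproduces (ii), and the third is vacuous. These are precisely the two identities defining an $\mathcal{O}$-operator of $\huaG(\huaA)$ on $\huaA^*$ associated to $(L_0^*,L_1^*)$.

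Combining the two equivalences yields the theorem: under $R=\sigma(R)$, the 2-graded CYBEs (b) and (c) hold iff $(R_0,R_1)$ is a chain map satisfying (i) and (ii), i.e. iff $(R_0,R_1)$ is an $\mathcal{O}$-operator on $\huaG(\huaA)$ associated to the coregular representation $(\huaA^*;L_0^*,L_1^*)$. I expect the main obstacle to be the grading bookkeeping in the third paragraph: one must verify carefully that $\frkR$ and $L_\ast^*$ split as claimed and that the three homogeneous specialisations of the $\huaP$-level identity land in the correct graded summands, since a single sign error or a misplaced component would spoil the identification with (i) and (ii). The vacuity of the third case is the structural shadow of the fact that a $2$-term Lie $2$-algebra carries no bracket of two degree $(-1)$ elements, which is exactly why Definition~\ref{defi:O-operator} records only the two conditions (i) and (ii).
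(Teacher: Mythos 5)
Your proof is correct, but it takes a genuinely different route from the paper's. The paper argues in coordinates: it fixes bases $\{e_i\}$ of $A_0$ and $\{\frke_j\}$ of $A_{-1}$, introduces structure constants via $e_i\cdot e_j$, $e_i\cdot\frke_j$, $\frke_i\cdot e_j$, writes $R=\sum_{i,j}a_{ij}(e_i\otimes\frke_j+\frke_j\otimes e_i)$, expands $[[R,R]]_s=-R_{12}\cdot R_{13}+R_{12}\cdot R_{23}+[R_{13},R_{23}]$ component by component, and checks that its vanishing and the two identities of Definition \ref{defi:O-operator} reduce to the same scalar equations \eqref{eq:O1}--\eqref{eq:O2}. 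You instead work coordinate-free: you transport the problem to the ungraded semidirect product $A_0\ltimes_{(\rho,\mu)}A_{-1}$ of Proposition \ref{pro:semi-product}, invoke the operator form of the $S$-equation for symmetric tensors on that pre-Lie algebra, and split the single resulting identity by internal degree. I checked your grading bookkeeping: $L_\ast^*$ does restrict to $L_0^*$ and $L_1^*$ as claimed, the case $\xi,\eta\in A_{-1}^*$ yields condition (i), the mixed case yields condition (ii), and the case $\xi,\eta\in A_0^*$ is $0=0$, since $(\frkR\xi)\ast q$ always lies in $A_{-1}$ (hence is killed by $\eta\in A_0^*$) and the bracket of two degree $-1$ elements vanishes. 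Two points are in your favour. First, you explicitly match condition (c), $\dM^\otimes r=0$, with the chain-map requirement $R_0\circ\dM^*=\dM\circ R_1$; the paper's proof never addresses this part of either definition (it identifies the CYBE with \eqref{eq:O1}--\eqref{eq:O2} alone), so your treatment closes a small gap in the published argument -- and since $\dM^\otimes\circ\dM^\otimes=0$, one has $\dM^\otimes r=\dM^\otimes R$, so it is immaterial whether (c) is read for $r$ or for $R$. Second, your closing remark explains structurally why Definition \ref{defi:O-operator} has only two bracket conditions, which the basis computation leaves opaque. What the paper's route buys in exchange is self-containedness: it needs only the definition of $[[\cdot,\cdot]]_s$ and direct expansion, whereas you rely on the two-sided (iff) operator characterization of symmetric solutions of the $S$-equation; that equivalence is indeed in Bai's paper, but this paper imports from that source only the one-directional Lemma \ref{operator-2}, so if you follow your route you should cite the equivalence precisely.
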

 \begin{proof}
 	Let $\{e_i\}_{1\leq i\leq k}$ and $\{\frke_j\}_{1\leq j\leq l}$ be the basis of $A_0$ and $A_{-1}$ respectively, and denote by $\{e_i^*\}_{1\leq i\leq k}$ and $\{\frke_j^*\}_{1\leq j\leq l}$ the dual basis. Suppose $e_i\cdot e_j=\displaystyle\sum_{k}c_{ij}^ke_k$, $e_i\cdot \frke_j=\displaystyle\sum_{k}d_{ij}^k\frke_k$, $\frke_i\cdot e_j=\displaystyle\sum_{k}f_{ij}^k\frke_k$, $R=\displaystyle\sum_{i,j}a_{ij}(e_i\otimes \frke_j+\frke_j\otimes e_i)$ with $a_{ij}=a_{ji}$. Hence $R_0(\frke_i^*)=\displaystyle\sum_{k}a_{ik}e_k$ and $R_1(e_i^*)=\displaystyle\sum_{k}a_{ik}\frke_k$. Then we have 
 	\begin{eqnarray*}
 		[[R,R]]_s&=&-R_{12}\cdot R_{13}+R_{12}\cdot R_{23}+[R_{13},R_{23}]\\
 		&=&\displaystyle\sum_{i,j,k}\Big(\sum_{t,l}(-a_{kl}a_{tj}d_{tl}^i+a_{lk}a_{ti}d_{tl}^j+a_{lj}a_{ti}c_{tl}^k-a_{ti}a_{lj}c_{lt}^k)\frke_i\otimes\frke_j\otimes e_k\\
 		&& +\sum_{t,l}(-a_{lk}a_{it}f_{tl}^j+a_{lk}a_{tj}c_{tl}^i+a_{it}a_{lj}d_{lt}^k-a_{lj}a_{it}f_{tl}^k)\frke_j\otimes e_i\otimes \frke_k\\
 		&& +\sum_{t,l}(-a_{lk}a_{tj}c_{tl}^i+a_{lk}a_{it}f_{tl}^j+a_{lj}a_{it}f_{tl}^k-a_{it}a_{lj}d_{lt}^k)e_i\otimes\frke_j\otimes\frke_k\Big).
 	\end{eqnarray*}
 	Therefore, $R$ is a solution of $2$-graded classical Yang-Baxter Equations in the strict pre-Lie $2$-algebra $\huaA$ if and only if 
 \begin{eqnarray}
 % \nonumber % Remove numbering (before each equation)
\label{eq:O1}   \sum_{t,l}(-a_{kl}a_{tj}d_{tl}^i+a_{lk}a_{ti}d_{tl}^j+a_{lj}a_{ti}c_{tl}^k-a_{ti}a_{lj}c_{lt}^k)&=&0, \\
 \label{eq:O2}   \sum_{t,l}(-a_{lk}a_{it}f_{tl}^j+a_{lk}a_{tj}c_{tl}^i+a_{it}a_{lj}d_{lt}^k-a_{lj}a_{it}f_{tl}^k)&=&0.
 \end{eqnarray}
On the other hand, we have 
 	\begin{eqnarray*}
 		R_0(L_0^*(R_0(\frke_i^*))\frke_j^*-L_0^*(R_0(\frke_j^*))\frke_i^*)-[R_0(\frke_i^*), R_0(\frke_j^*)]&=&\displaystyle\sum_{t,l,k}(-a_{it}a_{lk}d_{tl}^j+a_{jt}a_{lk}d_{tl}^i-a_{it}a_{jl}(c_{tl}^k-c_{lt}^k))e_k,\\
 		R_1(L_1^*(R_1(e_i^*))\frke_j^*-L_0^*(R_0(\frke_j^*))e_i^*)-[R_1(e_i^*), R_0(\frke_j^*)]&=&\displaystyle\sum_{t,l,k}(-a_{it}a_{lk}f_{tl}^j+a_{jt}a_{lk}c_{tl}^i-a_{it}a_{jl}(f_{tl}^k-d_{lt}^k))\frke_k.
 	\end{eqnarray*}
Thus,  $R_0(L_0^*(R_0(\frke_i^*))\frke_j^*-L_0^*(R_0(\frke_j^*))\frke_i^*)=[R_0(\frke_i^*), R_0(\frke_j^*)]$ if and only if \eqref{eq:O1} holds, and $R_1(L_1^*(R_1(e_i^*))\frke_j^*-L_0^*(R_0(\frke_j^*))e_i^*)=[R_1(e_i^*), R_0(\frke_j^*)]$ if and only if \eqref{eq:O2} holds. The conclusion follows.
 \end{proof}

 Let $(\rho_0,\rho_1)$ be a strict representation of the Lie 2-algebra $\huaG=(\g_0,\g_{-1}, \frkd, \frkl_2)$ on the 2-term complex of vector space $\huaV:V_{-1}\stackrel{\dM}{\longrightarrow}V_0$.
We view $\rho_0\oplus \rho_1$ a linear map from $\g_0\oplus \g_{-1}$ to $ \gl(V_0\oplus V_{-1})$ by
\begin{equation}
  (\rho_0\oplus\rho_1)(x+a)(u+m)=\rho_0(x)(u)+\rho_0(x)m+\rho_1(a)u.
\end{equation}
By straightforward computations, we have

\begin{lem}\label{lem:Ooperatorequi}{\rm (\cite{Sheng19})}
With the above notations,  $\rho_0\oplus \rho_1:\g_0\oplus \g_{-1}\longrightarrow \gl(V_0\oplus V_{-1})$ is a representation of $(\g_0\oplus \g_1,[\cdot,\cdot]_s)$ on $V_0\oplus V_1$. Furthermore, $(T_0,T_1)$ is an $\mathcal O$-operator on $\huaG$ associated to the representation $(\rho_0,\rho_1)$ if and only if \begin{itemize}
  \item[\rm(a)] $T_0+ T_1:V_0\oplus V_{-1}\longrightarrow \g_0\oplus \g_{-1}$ is an $\mathcal O$-operator on the Lie algebra $(\g_0\oplus \g_1,[\cdot,\cdot]_s)$ associated to the representation $\rho_0\oplus \rho_1$,
       \item[\rm(b)] $T_0\circ \dM=\frkd\circ T_1.$
 \end{itemize}
\end{lem}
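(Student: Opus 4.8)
The plan is to prove both assertions by expanding the definitions and splitting every equation according to the grading on $\g_0\oplus\g_{-1}$ and on $V_0\oplus V_{-1}$. First I would record that, because $\huaG$ is \emph{strict} (so $\frkl_3=0$), the total space $\g_0\oplus\g_{-1}$ with the bracket
\[
[x+a,\,y+b]_s=\frkl_2(x,y)+\frkl_2(x,b)+\frkl_2(a,y),\qquad x,y\in\g_0,\ a,b\in\g_{-1},
\]
($\frkl_2$ extended by zero on $\g_{-1}\times\g_{-1}$) is an honest Lie algebra: the Jacobi identity is exactly axioms (ii)--(iii) of Definition \ref{Lie 2-algebra} once the $\frkd\frkl_3$-term is dropped. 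This is the ambient object for which $\rho_0\oplus\rho_1$ and $T_0+T_1$ are to be a representation and an $\mathcal{O}$-operator.

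For the first assertion I would evaluate $(\rho_0\oplus\rho_1)([x+a,y+b]_s)$ and the commutator $[(\rho_0\oplus\rho_1)(x+a),(\rho_0\oplus\rho_1)(y+b)]$ on a general element $u+m\in V_0\oplus V_{-1}$ and compare the $V_0$- and $V_{-1}$-components separately. The $V_0$-component reduces to $\rho_0(\frkl_2(x,y))u=[\rho_0(x),\rho_0(y)]u$, which is condition (ii) of Definition \ref{strict homomorphism} for the strict homomorphism $\rho:\huaG\to\End(\huaV)$; the $V_{-1}$-component splits into a piece $\rho_0(\frkl_2(x,y))m$ (again condition (ii)) and pieces $\rho_1(\frkl_2(x,b))u$ and $\rho_1(\frkl_2(a,y))u$, which match $[\rho_0(x),\rho_1(b)]u$ and $-[\rho_0(y),\rho_1(a)]u$ by condition (iii) together with the skew-symmetry $\frkl_2(a,y)=-\frkl_2(y,a)$. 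Thus $\rho_0\oplus\rho_1$ is a representation.

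For the equivalence I would write $T=T_0+T_1$, take $\xi=u+m$, $\eta=v+n$ in $V_0\oplus V_{-1}$, and expand the Lie-algebra $\mathcal{O}$-operator identity $[T\xi,T\eta]_s=T\big((\rho_0\oplus\rho_1)(T\xi)\eta-(\rho_0\oplus\rho_1)(T\eta)\xi\big)$ using the explicit $[\cdot,\cdot]_s$ above. Projecting onto $\g_0$ yields exactly $\frkl_2(T_0u,T_0v)=T_0(\rho_0(T_0u)v-\rho_0(T_0v)u)$, i.e.\ condition (i) of Definition \ref{defi:O-operator}; projecting onto $\g_{-1}$ and polarizing (setting $m=0$ and $n=0$ in turn, then recombining by bilinearity) yields condition (ii) of Definition \ref{defi:O-operator} and its skew-symmetric partner, which coincide by skew-symmetry of $\frkl_2$. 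Conversely (i) and (ii) reassemble into the single Lie-algebra identity. Finally I would note that the chain-map requirement built into the definition of an $\mathcal{O}$-operator on $\huaG$ is precisely $T_0\circ\dM=\frkd\circ T_1$, i.e.\ condition (b), and that it is genuinely an independent condition here because the bracket $[\cdot,\cdot]_s$ mixes the two degrees and so imposes no grading constraint on a mere $\mathcal{O}$-operator of $\g_0\oplus\g_{-1}$. Combining the three observations gives the stated equivalence.

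I expect the only real obstacle to be bookkeeping: keeping the $V_0$/$V_{-1}$ (resp.\ $\g_0$/$\g_{-1}$) components and the signs from skew-symmetry straight, and checking carefully that the bilinear $\g_{-1}$-component of the $\mathcal{O}$-operator identity with both $m,n\neq0$ really does follow from condition (ii) alone by polarization rather than requiring a new hypothesis.
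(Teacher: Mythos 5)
Your proposal is correct, and the componentwise verification you outline (splitting every identity by the $\g_0$/$\g_{-1}$ and $V_0$/$V_{-1}$ grading, with the polarization step for the degree $-1$ component) is precisely the "straightforward computation" the paper invokes — the paper itself prints no proof, deferring to \cite{Sheng19}. In particular, your identification of $[\cdot,\cdot]_s$ as $\frkl_2$ extended by zero on $\g_{-1}\times\g_{-1}$, and your observation that setting $n=0$ isolates condition (ii) of Definition \ref{defi:O-operator} while the chain-map requirement remains the independent condition (b), are exactly the right points.
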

 
 Let $(T_0,T_1)$ be an $\mathcal O$-operator on a Lie $2$-algebra $\huaG$ associated to the representation $(\rho_0,\rho_1)$. Define a degree $0$ multiplication $\cdot:V_i\otimes V_j\longrightarrow V_{i+j}$, $-1\leq i+j\leq 0$, on $\huaV$ by
    \begin{equation*}
      u\cdot v=\rho_0(T_0u)v,\quad u\cdot m=\rho_0(T_0u)m,\quad m\cdot u=\rho_1(T_1m)u.
    \end{equation*}  
      Then, by Proposition \ref{pro:RB-Lie2}, $(V_0,V_{-1},\dM,\cdot)$ is a strict pre-Lie  $2$-algebra. Let $T(\huaV)=\{(T_0u,T_1m)\mid u\in V_0,m\in V_{-1}\}$, and there is an induced pre-Lie  $2$-algebra structure on $T(\huaV )\subset\huaG$ given by 
 \begin{equation*}
      T_0u\cdot T_0v=T_0(\rho_0(T_0u)v),\quad T_0u\cdot T_1m=T_1(\rho_0(T_0u)m),\quad T_1m\cdot T_0u=T_0(\rho_1(T_1m)u).
    \end{equation*}  
 Let $\rho^*=(\rho_0^*, \rho_1^*)$ be the dual representation $\rho=(\rho_0, \rho_1)$. Then we have the semidirect product pre-Lie $2$-algebra $\bar{\huaG}=T(\huaV)\ltimes_{\rho^*,0}\huaV^*$, where $\bar{\huaG}_0=T(\huaV)_0\oplus V_{-1}^*$, $\bar{\huaG}_{-1}=T(\huaV)_{-1}\oplus V_0^*$ and $\bar{\dM}=\frkd\oplus\dM^*$. It is obvious that 
 $$T_0+T_1 \in V_0^*\otimes T(\huaV)_0\oplus V^*_{-1}\otimes T(\huaV)_{-1} \subset (\bar{\huaG}_{-1}\otimes\bar{\huaG}_0)\oplus(\bar{\huaG}_0\otimes\bar{\huaG}_{-1}).$$

 \begin{lem}\label{operator-2}{\rm(\cite{Left-symmetric bialgebras})}
 	Let $\huaG$ be a Lie algebra. Let $\rho:\huaG\longrightarrow\gl(V)$ be a representation of $\huaG$ and $\rho^*:\huaG\longrightarrow\gl(V^*)$ be its dual representation. Suppose that $T:V\longrightarrow\huaG$ is an $\mathcal{O}$-operator associated to $\rho$. Then $$r=T+\sigma(T)$$ is a symmetric solution of the classical Yang-Baxter Equations in the semi-product pre-Lie algebra $T(V)\ltimes_{\rho^*,0}V^*$, where $T(V)=\{T(u)\in\huaG| u\in V\}$ is the pre-Lie algebra with the pre-Lie operation defined by
 $$Tu\cdot Tv=T(\rho(Tu)v),\quad u,v\in V,$$
 and $T$ can be identified as an element in $T(V)\otimes V^*\subset(T(V)\ltimes_{\rho^*,0}V^*)\otimes(T(V)\ltimes_{\rho^*,0}V^*)$.
 \end{lem}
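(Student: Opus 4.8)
The plan is to verify the two defining properties of a symmetric solution of the classical Yang--Baxter Equations in the pre-Lie algebra $A:=T(V)\ltimes_{\rho^*,0}V^*$: that $r$ is symmetric, and that $[[r,r]]=0$. Under the canonical identification $\Hom(V,T(V))\cong T(V)\otimes V^*$, the operator $T$ corresponds to $\sum_i Tv_i\otimes v_i^*$ for a basis $\{v_i\}$ of $V$ with dual basis $\{v_i^*\}$, so that $r=\sum_i\big(Tv_i\otimes v_i^*+v_i^*\otimes Tv_i\big)$ is visibly symmetric. Hence the first property is immediate, and since for a symmetric $r$ the conditions of Theorem \ref{pf-2} reduce to the single requirement $[[r,r]]=0$ (the term $r_{12}-r_{21}$ appearing in (a) vanishes), only this last equation requires work.

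Before computing I would record the structural facts that make $A$ a pre-Lie algebra. First, $T(V)$ is a pre-Lie algebra under $Tu\cdot Tv=T(\rho(Tu)v)$: this product is representative-independent because, for $w\in\Ker T$, the $\mathcal{O}$-operator identity gives $0=[Tu,Tw]=T(\rho(Tu)w)$, so $\rho(Tu)$ preserves $\Ker T$, and the pre-Lie identity is the standard consequence of the $\mathcal{O}$-operator condition. Second, $(\rho^*,0)$ is a genuine pre-Lie representation of $T(V)$ on $V^*$, its left part being the restriction of the dual Lie representation to the Lie subalgebra $\huaG(T(V))\subseteq\huaG$ and its right part vanishing. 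Conceptually it is worth noting that $T$, read as a map $V\to T(V)$, is tautologically an $\mathcal{O}$-operator on the pre-Lie algebra $T(V)$ associated to $(\rho,0)$, whose dual representation is exactly $(\rho^*,0)$; this is what dictates the precise form of $A$. The multiplication in $A$ then obeys the rules I shall use repeatedly: $Tu\cdot Tv=T(\rho(Tu)v)\in T(V)$ and $Tu\cdot\xi=\rho^*(Tu)\xi\in V^*$, while $\xi\cdot Tu=0$ and $\xi\cdot\eta=0$ for all $\xi,\eta\in V^*$.

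The core step is the expansion of $[[r,r]]$, which for the symmetric $r$ is the combination $-r_{12}\cdot r_{13}+r_{12}\cdot r_{23}+[r_{13},r_{23}]$. Writing $a_i=Tv_i$, $b_i=v_i^*$ and substituting $r$, every summand whose active tensor slot would be a product of the form $b\cdot(\,\cdot\,)$ or $b\cdot b$ is killed by the rules above, leaving exactly eight terms. I would sort these according to the unique tensor slot carrying a $T(V)$-element. Two of the three resulting groups cancel in pairs directly: the contributions from $r_{12}\cdot r_{13}$ and $r_{23}\cdot r_{13}$ with the $T(V)$-element in the first slot annihilate each other after relabelling, and likewise the two contributions with the $T(V)$-element in the second slot. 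The remaining group of four terms, with the $T(V)$-element in the third slot, cancels once $\rho^*(Tu)\xi$ is rewritten through the duality $\langle\rho^*(x)\xi,v\rangle=-\langle\xi,\rho(x)v\rangle$: the four matrix-coefficient expressions then sum to zero. This yields $[[r,r]]=0$, so $r$ is a symmetric solution of the classical Yang--Baxter Equations in $A$.

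The main obstacle is purely bookkeeping: correctly interpreting $r_{pq}\cdot r_{st}$ as placing the pre-Lie product in the shared tensor slot with the factor from the left-hand element first, and tracking the symmetrisation $\sigma$ simultaneously across all three positions so that the surviving terms are paired with the correct signs. A point I would emphasise is that the $\mathcal{O}$-operator hypothesis does not enter the final cancellation directly; its role is entirely in guaranteeing that $A$ is a bona fide pre-Lie algebra --- that is, in the well-definedness of $T(\rho(Tu)v)$ and the pre-Lie identity on $T(V)$ --- so that $[[r,r]]$ is a meaningful element of $A^{\otimes 3}$ in the first place.
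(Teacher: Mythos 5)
Your proof is correct. There is, however, no in-paper argument to compare it against: the paper states this lemma with an attribution to \cite{Left-symmetric bialgebras} and gives no proof, so the standard of comparison is the cited source, and your direct verification (symmetry of $r$ is immediate; expand $[[r,r]]=-r_{12}\cdot r_{13}+r_{12}\cdot r_{23}+[r_{13},r_{23}]$ using the multiplication rules of $T(V)\ltimes_{\rho^*,0}V^*$, group the eight surviving terms by the slot carrying the $T(V)$-factor, and cancel) is essentially that computation --- it is also the same expansion the present paper carries out in the graded setting in its theorem relating symmetric $R$ to $\huaO$-operators. Two small remarks. First, your division of labour among the three groups is slightly off: the cancellations with the $T(V)$-factor in the first and in the second slot are not pure relabellings either; they too require the duality $\langle\rho^*(x)\xi,v\rangle=-\langle\xi,\rho(x)v\rangle$, equivalently the invariance identity $\sum_j T(\rho(x)v_j)\otimes v_j^*=-\sum_j Tv_j\otimes\rho^*(x)v_j^*$, exactly as the third group does --- only the pairing pattern differs. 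Since you state and use this duality anyway, this is a presentational imprecision, not a gap. Second, your closing observation is accurate and worth keeping: the $\huaO$-operator identity enters only in making $T(V)\ltimes_{\rho^*,0}V^*$ a bona fide pre-Lie algebra (well-definedness of $Tu\cdot Tv=T(\rho(Tu)v)$, the pre-Lie identity, and the fact that the sub-adjacent bracket of $T(V)$ is the restriction of the bracket of $\huaG$, so that $\rho^*$ restricts to a representation), after which the vanishing of $[[r,r]]$ is a formal consequence of the multiplication rules.
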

 
 \begin{thm}\label{solution-1}
 	Let $(T_0,T_1)$ be an $\mathcal O$-operator on a strict Lie $2$-algebra $\huaG$ associated to the strict representation $(\rho_0,\rho_1)$  on vector spaces $\huaV:V_{-1} \stackrel{\dM}{\longrightarrow} V_{0}$ and $\rho^*=(\rho_0^*, \rho_1^*)$ be its dual representation. Then $$R=T_0+T_1+\sigma(T_0+T_1)$$
 	is a solution of $2$-graded $2$-graded classical Yang-Baxter Equations in the semidirect product pre-Lie $2$-algebra $T(\huaV)\ltimes_{\rho^*,0}\huaV^*$. 
 \end{thm}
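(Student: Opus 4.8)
The plan is to deduce the three defining equations of the $2$-graded CYBEs for $R=T_0+T_1+\sigma(T_0+T_1)$ --- namely (a) $R=\sigma(R)$, (b) $[[R,R]]_s=0$, and (c) $\dM^\otimes r=0$ --- from the two preparatory lemmas already available, taking $\tau=0$ so that $r=R$. The guiding idea is that the whole problem collapses onto the one-step (classical, non-graded) statement once the $\huaO$-operator $(T_0,T_1)$ is flattened onto the sub-adjacent Lie algebra.

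First I would apply Lemma \ref{lem:Ooperatorequi}. Since $(T_0,T_1)$ is an $\huaO$-operator on the strict Lie $2$-algebra $\huaG$ associated to $(\rho_0,\rho_1)$, the map $T:=T_0+T_1\colon V_0\oplus V_{-1}\longrightarrow\g_0\oplus\g_{-1}$ is an $\huaO$-operator on the Lie algebra $(\g_0\oplus\g_{-1},[\cdot,\cdot]_s)$ associated to $\rho_0\oplus\rho_1$, and in addition $T_0\circ\dM=\frkd\circ T_1$. Feeding this single $\huaO$-operator $T$ into Lemma \ref{operator-2} (with the dual representation $(\rho_0\oplus\rho_1)^*$) yields at once that $R=T+\sigma(T)$ is a \emph{symmetric} solution of the ordinary classical Yang-Baxter Equation in the semidirect product pre-Lie algebra $T(V)\ltimes_{(\rho_0\oplus\rho_1)^*,0}V^*$. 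Symmetry of $R$ is exactly equation (a), and vanishing of the CYBE-bracket is exactly equation (b), \emph{provided} I identify that pre-Lie algebra with the underlying pre-Lie algebra (in the sense of Proposition \ref{pro:semi-product}) of the semidirect product pre-Lie $2$-algebra $T(\huaV)\ltimes_{\rho^*,0}\huaV^*$ in which the theorem is phrased.

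That identification is the step I expect to demand the most care, and it is where the real content beyond quoting the lemmas lies. Concretely, I must check that flattening $T(\huaV)\ltimes_{\rho^*,0}\huaV^*$ through the rule $(x+a)\ast(y+b)=x\cdot y+\rho(x)b+\mu(y)a$ of Proposition \ref{pro:semi-product} reproduces exactly $T(V)\ltimes_{(\rho_0\oplus\rho_1)^*,0}V^*$: the underlying spaces match because $T(V)=T(\huaV)_0\oplus T(\huaV)_{-1}$ and $V^*=V_0^*\oplus V_{-1}^*$, and the products match because the pre-Lie $2$-algebra structure on $T(\huaV)$ built from $(T_0,T_1)$ via Proposition \ref{pro:RB-Lie2} flattens to $Tu\cdot Tv=T(\rho(Tu)v)$, while the dual-representation components $(\rho_0^*,\rho_1^*)$ reassemble into $(\rho_0\oplus\rho_1)^*$ under the same flattening (the $\mu=0$ slot guaranteeing that no extra terms are introduced). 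Once this is in place, the bracket $[[\cdot,\cdot]]_s$ of the $2$-graded CYBEs is literally the bracket $[[\cdot,\cdot]]$ of Theorem \ref{pf-2} evaluated in this pre-Lie algebra, so (b) is immediate.

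Finally I would settle equation (c) directly from the chain-map relation $T_0\circ\dM=\frkd\circ T_1$ of Lemma \ref{lem:Ooperatorequi}. Regarding $T_0+T_1$ as an element of $(\bar{\huaG}_{-1}\otimes\bar{\huaG}_0)\oplus(\bar{\huaG}_0\otimes\bar{\huaG}_{-1})$ with $\bar{\dM}=\frkd\oplus\dM^*$, the operator $\dM^\otimes=\dM\otimes1-1\otimes\dM$ differentiates only the $V^*$-factors; expanding in dual bases $\{f_\alpha^*\}$ of $V_0^*$ and $\{g_\beta^*\}$ of $V_{-1}^*$ gives $\dM^\otimes(T_0+T_1)=\sum_\beta g_\beta^*\otimes(T_0\circ\dM-\frkd\circ T_1)(g_\beta)=0$. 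Since $\dM^\otimes$ anti-commutes with $\sigma$ on the relevant graded pieces, $\dM^\otimes\sigma(T_0+T_1)=0$ as well, so $\dM^\otimes r=\dM^\otimes R=0$. Assembling (a), (b) and (c) shows that $R$ solves the $2$-graded CYBEs in $T(\huaV)\ltimes_{\rho^*,0}\huaV^*$. The sign and grading conventions entering (c) are routine once the grading of $\bar{\huaG}\otimes\bar{\huaG}$ is fixed; the genuine obstacle remains the structural matching of the two pre-Lie algebras in the previous step.
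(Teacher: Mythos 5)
Your proposal is correct and follows essentially the same route as the paper's proof: flatten the $\huaO$-operator via Lemma \ref{lem:Ooperatorequi}, invoke Lemma \ref{operator-2} to obtain a symmetric solution of the classical CYBE, identify the resulting semidirect product pre-Lie algebra with the flattening (via Proposition \ref{pro:semi-product}) of $T(\huaV)\ltimes_{\rho^*,0}\huaV^*$, and deduce condition (c) from the chain-map identity $T_0\circ\dM=\frkd\circ T_1$. The only difference is one of detail: you make explicit the structural matching of the two semidirect products and the dual-basis verification of (c), which the paper states more briefly.
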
	
 \begin{proof}
 	It is obvious that $(\rho_0\oplus\rho_1)^*=\rho_0^*\oplus\rho_1^*:\g_0\oplus \g_{-1}\longrightarrow\gl(V_0^*\oplus V_{-1}^*)$. If $(T_0,T_1)$ is an $\mathcal{O}$-operator on $\huaG$ associated to the representation $\rho=(\rho_0,\rho_1)$, by Lemma \ref{lem:Ooperatorequi} and Lemma \ref{operator-2},  $R=T_0+T_1+\sigma(T_0+T_1)$ is a  symmetric solution of the classical Yang-Baxter Equations in the semi-product pre-Lie algebra $T(\huaV)\ltimes_{\rho_0^*\oplus\rho_1^*,0}(V_{-1}^*\oplus V_0^*)$ and $T_0\circ \dM=\frkd\circ T_1$. Note that the semidirect product pre-Lie algebra $T(\huaV)\ltimes_{\rho_0^*\oplus\rho_1^*,0}(V_{-1}^*\oplus V_0^*)$ is the same as the semidirect product pre-Lie algebra $\bar{\huaG}_0\ltimes\bar{\huaG}_{-1}$ given by Proposition \ref{pro:semi-product}. Furthermore, $T_0\circ \dM=\frkd\circ T_1$ if and only if $(\bar{\dM}\otimes1-1\otimes\bar{\dM})(T_0+T_1)=0$. Thus, if $(T_0,T_1)$ is an $\mathcal{O}$-operator on $\huaG$ associated to the representation  $\rho=\rho_0,\rho_1$,  $R=(T_0+T_1)+\sigma(T_0+T_1)$ is a solution of  $2$-graded classical Yang-Baxter Equations in the semidirect product pre-Lie $2$-algebra $T(\huaV)\ltimes_{\rho^*,0}\huaV^*$.
 \end{proof}

 \begin{cor}
 Let $\huaA=(A_0,A_{-1},\dM,\cdot)$ be a strict pre-Lie $2$-algebra. Let $\{e_i\}_{1\leq i\leq k}$ and $\{\frke_j\}_{1\leq j\leq l}$ be the basis of $A_0$ and $A_{-1}$ respectively, and denote by $\{e_i^*\}_{1\leq i\leq k}$ and $\{\frke_j^*\}_{1\leq j\leq l}$ the dual basis. Then 
 \begin{equation}
 	R=\displaystyle\sum_{i=1}^{k}(e_i\otimes e_i^*+e_i^*\otimes e_i)+\displaystyle\sum_{j=1}^{l}(\frke_j\otimes \frke_j^*+\frke_j^*\otimes \frke_j)
 \end{equation}
 is a solution of $2$-graded classical Yang-Baxter Equations in $\huaA\ltimes_{L^*,0}\huaA^*$ with $L^*=(L_0^*, L_1^*)$.
 \end{cor}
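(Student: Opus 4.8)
The plan is to specialize Theorem~\ref{solution-1} to the identity $\mathcal{O}$-operator. First I would work with the subadjacent strict Lie $2$-algebra $\huaG(\huaA)=(A_0,A_{-1},\dM,\frkl_2)$ together with its strict representation $(L_0,L_1)$ on the complex $A_{-1}\stackrel{\dM}{\longrightarrow}A_0$ furnished by the Corollary following Theorem~\ref{thm:main1}. I claim that $(\Id_{A_0},\Id_{A_{-1}})$ is an $\mathcal{O}$-operator on $\huaG(\huaA)$ associated to $(L_0,L_1)$. This is immediate from Definition~\ref{defi:O-operator}: condition~(i) becomes $\Id\big(L_0(x)y-L_0(y)x\big)-\frkl_2(x,y)=(x\cdot y-y\cdot x)-(x\cdot y-y\cdot x)=0$ by \eqref{eq:l21}, and condition~(ii) becomes $\Id\big(L_1(a)y-L_0(y)a\big)-\frkl_2(a,y)=(a\cdot y-y\cdot a)-(a\cdot y-y\cdot a)=0$ by \eqref{eq:l22}; the chain-map requirement $\Id\circ\dM=\dM\circ\Id$ holds trivially.

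Next I would identify the structures produced by Theorem~\ref{solution-1}. Since $T_0=\Id_{A_0}$ and $T_1=\Id_{A_{-1}}$, the image $T(\huaV)$ is all of $\huaA$, and the induced multiplication $T_0u\cdot T_0v=T_0(\rho_0(T_0u)v)=u\cdot v$ recovers the original operation, so $T(\huaV)=\huaA$ as a strict pre-Lie $2$-algebra. The dual representation is $\rho^*=(L_0^*,L_1^*)=L^*$, whence the ambient algebra $T(\huaV)\ltimes_{\rho^*,0}\huaV^*$ coincides with $\huaA\ltimes_{L^*,0}\huaA^*$. Theorem~\ref{solution-1} then yields that $R=T_0+T_1+\sigma(T_0+T_1)$ is a solution of the $2$-graded classical Yang-Baxter Equations in $\huaA\ltimes_{L^*,0}\huaA^*$.

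Finally I would unwind the tensor identification recalled before Lemma~\ref{operator-2}. Under the embedding $T_0+T_1\in V_0^*\otimes T(\huaV)_0\oplus V_{-1}^*\otimes T(\huaV)_{-1}$, the operator $\Id_{A_0}$ corresponds to $\sum_{i=1}^{k}e_i^*\otimes e_i$ and $\Id_{A_{-1}}$ to $\sum_{j=1}^{l}\frke_j^*\otimes \frke_j$, so that $\sigma(T_0+T_1)=\sum_{i=1}^{k}e_i\otimes e_i^*+\sum_{j=1}^{l}\frke_j\otimes \frke_j^*$; adding the two expressions produces exactly the asserted $R$. I do not expect a genuine obstacle, since all the substantive content is carried by Theorem~\ref{solution-1}; the only point demanding care is the degree bookkeeping, namely verifying that $A_0^*$ sits in $\bar{\huaG}_{-1}$ and $A_{-1}^*$ in $\bar{\huaG}_0$, so that the two summands of $T_0+T_1$ indeed lie in the correct graded pieces of $(\bar{\huaG}_{-1}\otimes\bar{\huaG}_0)\oplus(\bar{\huaG}_0\otimes\bar{\huaG}_{-1})$.
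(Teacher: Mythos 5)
Your proposal is correct and follows exactly the paper's own argument: the paper likewise sets $\huaV=\huaA$, $\rho=(L_0,L_1)$, $(T_0,T_1)=(\mathrm{id}_{A_0},\mathrm{id}_{A_{-1}})$ and invokes Theorem \ref{solution-1}, merely asserting as obvious the $\mathcal{O}$-operator verification and the basis expansion that you spell out in detail.
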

 \begin{proof}
 	Let $\huaV=\huaA$, $\rho=(\rho_0,\rho_1)=(L_0, L_1)$ and $(T_0,T_1)=(\mathrm{id}_{A_0},\mathrm{id}_{A_1})$ in Theorem \ref{solution-1}. It is obvious that $(\mathrm{id}_{A_0},\mathrm{id}_{A_1})$ is an $\mathcal{O}$-operator on $\huaG(\huaA)$ associated to the representation $(L_0, L_1)$. Then $$R=T_0+T_1+\sigma(T_0+T_1)=\displaystyle\sum_{i=1}^{k}(e_i\otimes e_i^*+e_i^*\otimes e_i)+\displaystyle\sum_{j=1}^{l}(\frke_j\otimes \frke_j^*+\frke_j^*\otimes \frke_j)$$ is a solution of $2$-graded classical Yang-Baxter Equations in $\huaA\ltimes_{L^*,0}\huaA^*$.
 \end{proof}

\noindent
{\bf Acknowledgements.} This research is supported by  NSFC (12201068, 12371029, W2412041) and the National Key Research and Development Program of China (2021YFA1002000).

 \end{document}